\documentclass[aap,press]{apt} \volno{58} \partno{1}
\paperno{} 
\firstpage{1} 
\usepackage{lineno,comment,amsmath,amssymb,amsfonts} 
\usepackage[colorlinks = true,linkcolor = blue,urlcolor  = blue,citecolor = blue,anchorcolor =blue]{hyperref}
\usepackage{fix-cm}

\authornames{Sarkar, Khare and Srivastava} 
\shorttitle{Moments of Gaussian condition numbers and their limits} 

\DeclareMathOperator{\Gcal}{\mathcal{G}}

\DeclareMathOperator{\Ical}{\mathcal{I}}

\DeclareMathOperator{\Ncal}{\mathcal{N}}

\DeclareMathOperator{\EE}{\mathbb{E}} 

\newcommand{\normr}[1]{\left\lVert#1\right\rVert}
\newcommand{\norm}[1]{\left\lVert#1\right\rVert_2}

\recd{}{}

\begin{document}

\title{Moment bounds for condition numbers and singular values of high-dimensional Gaussian random matrices: Applications and limitations} 

\authorone[Department of Statistics, Florida State University]{Partha Sarkar}
\authortwo[Department of Statistics, University of Florida]{Kshitij Khare} 
\authorthree[Department of Statistics and Actuarial Science, University of Iowa]{Sanvesh Srivastava}


\addressone{117 N Woodward Ave, Tallahassee, FL, USA} 
\emailone{ps24v@fsu.edu} 
\addresstwo{102 Griffin-Floyd Hall, Gainesville, FL, USA} 
\emailtwo{kdkhare@stat.ufl.edu} 
\addressthree{20 East Washington Street, Iowa City, IA, USA}
\emailthree{ssrivastva@uiowa.edu}

\begin{abstract}
Spectral properties of Gram matrices are central to high‑dimensional asymptotic analyses of statistical estimators in regression and covariance estimation. These properties, in turn, depend critically on the extreme singular values and condition numbers of Gaussian random matrices. For many applications, sharp positive and negative moment bounds for these quantities are required to control expected prediction risk and related performance metrics. Although extensive work provides concentration and tail bounds for extreme singular values of Gaussian random matrices, these results do not readily yield the moment bounds needed in such analyses. Motivated by this gap, we establish non-asymptotic moment bounds for arbitrary positive moments of the largest singular value and arbitrary negative moments of the smallest singular value, and uniform bounds for arbitrary positive moments of the condition number, of high-dimensional Gaussian random matrices. We demonstrate the utility of these bounds by applying them to derive explicit risk guarantees in high‑dimensional regression and covariance estimation, as well as to obtain bounds on the mean iteration complexity of gradient descent for solving Gram linear systems. Finally, we present counterexamples demonstrating that the positive condition‑number moment bounds and negative smallest‑singular‑value moment bounds cannot, in general, be extended to the broader class of sub‑Gaussian random matrices. 
\end{abstract}

\keywords{Condition number; Gaussian random matrix; Gram matrix;  random design; sub-Gaussian distributions}

\ams{62E20}{60F25}    
\newpage

\section{Introduction}

\noindent Given a data matrix $X_n \in \mathbb{R}^{n \times p}$, the associated Gram matrix is defined as $S_n := \frac{1}{n} X_n^\top X_n$~\footnote{It is more common to refer to $nS_n = X_n^\top X_n$ as the Gram matrix, but we refer to $S_n$ as the Gram matrix for expositional convenience}. Gram matrices arise naturally in regression, covariance estimation, and related statistical models, where $X_n$ serves as either a design matrix or an observation matrix. Asymptotic investigations of the corresponding classical statistical estimators typically assume that the rows of $X_n$ are independently sampled from a multivariate Gaussian distribution. Modern data applications frequently operate in regimes where the dimension $p$ is comparable to, or exceeds, the sample size $n$. Consequently, meaningful asymptotic investigations require frameworks in which $p$ grows with $n$. In such \emph{high-dimensional settings}, statistical accuracy and computational stability depend critically on the spectral properties of $X_n$. In these settings, characterizing the expected prediction risk—an essential measure of estimation accuracy—frequently hinges on obtaining sharp moment bounds for the condition number, or for the extreme singular values of $X_n$ (see Section~\ref{sec:gram_app} for concrete examples). These considerations motivate the present study. We investigate arbitrary positive moments of the condition number and largest singular value of $X_n$, together with arbitrary negative moments of its smallest singular value, under regimes where $p/n \to \gamma \in [0,\infty)$.

The condition number is a central object in numerical linear algebra, governing stability, sensitivity, and algorithmic convergence. Its probabilistic behavior has therefore received sustained attention in random matrix theory. Following the seminal work of Edelman \cite{alan_edelman}, numerous studies have established several distributional properties and tail bounds for condition numbers of random matrices; see, for example, \cite{edelman, Sankar, manriquemirón2023conditionnumberrandomtridiagonal, pan2012conditionnumbersrandomtoeplitz}. For matrices with independent identically distributed (i.i.d.) Gaussian entries, a particularly rich theory is available \cite{edelman, alan_edelman, Dongarra, PhysRevE.90.050103}. However, to the best of our knowledge, existing results largely focus on tail probabilities or expectations of logarithmic functionals and do not directly yield conclusions regarding the behavior of general moments of the condition number.

As our first contribution, we investigate this question within the framework of a random design matrix $X_n \in \mathbb{R}^{n \times p}$ whose rows are i.i.d.\ multivariate Gaussian. We impose standard bounded-eigenvalue assumptions on the covariance matrix of the underlying Gaussian distribution. For any fixed positive $r$, our results (see Theorem~\ref{theorem_condition_number} and Lemma~\ref{lem:double_descent}) show that when $p/n \to \gamma \in [0,\infty) \setminus \{1\}$ as $n \to \infty$, the $r^{\text{th}}$ moment of the condition number of $X_n$ remains uniformly bounded by a constant depending only on $r$ (and not on $n$). Moreover, Lemma~\ref{lem:double_descent} demonstrates that the expected condition number diverges when $\gamma = 1$, revealing a phase-transition phenomenon consistent with---and extending---the double descent behavior anticipated in \cite{poggio2020doubledescentconditionnumber}. Although our analysis incorporates tail bounds derived in \cite{Dongarra}, the proof requires a sequence of additional, nontrivial arguments beyond those available in the existing literature.

Uniform boundedness of the expected condition number alone is not sufficient for asymptotic evaluations in many statistical applications. In particular, the study and control of negative moments of the smallest singular value of a Gaussian random matrix plays a fundamental role in problems such as Gram matrix estimation (see, e.g., \cite{ridge1,ridge2,Hastie,littlewood_offword,RudelsonVershynin2009SmallestSingularRectangular}). A classical result of \cite{RudelsonVershynin2009SmallestSingularRectangular} shows that for $n \times p$ matrices with independent sub-Gaussian entries, the smallest singular value is typically of the order of the rectangular spectral gap, namely $\sqrt{n}-\sqrt{p}$ when $n>p$, with high probability. More precisely, the associated \emph{small-ball probability} bound consists of two components: an exponentially decaying term and an anti-concentration (small-ball) term reflecting the local mass behavior of the entry distribution. While this formulation applies broadly to sub-Gaussian ensembles, it is not sufficiently sharp in the Gaussian case for establishing bounds on negative moments, due to the presence of the exponentially small term (see Section~\ref{connection:small_ball}). To address this limitation, we derive explicit non-asymptotic bounds for negative moments of the smallest singular value of a Gaussian random matrix. Using these bounds, we show that for any fixed positive $r$, the $r^{\text{th}}$ moment of $\frac{\sqrt{n}}{s_{\min}(X_n)}$ (where $s_{\min}$ denotes the smallest singular value) remains uniformly bounded by a constant depending only on $r$ (and not on $n$) under the regime $p/n \to \gamma \in [0,\infty)\setminus \{1\}$. Our analysis is based on a small-ball probability bound tailored to Gaussian matrices $X_n$ (see Theorem~\ref{min_singular_value}). Additionally, using a concentration inequality from \cite{vershynin2011introductionnonasymptoticanalysisrandom}, we establish a complementary and technically more direct result for positive moments of the largest singular value (see Lemma~\ref{max_singular_value}) under the same asymptotic regime $p/n \to \gamma \in [0,\infty)$.

Section~\ref{sec:gram_app} presents several applications of the preceding results in statistical settings involving Gram matrices. In particular, in Section~\ref{sec:ridge} we derive an explicit upper bound for the mean prediction risk (under Frobenius norm) of a ridge estimator in a general multi-response linear regression model with $p$ predictors and $q$ responses (Lemma~\ref{lem:multi_variance_term}). As a consequence, Theorem~\ref{thm:ridge_pred_bound_hd_multi} establishes that this prediction risk converges to zero under a random design framework and the standard scaling condition $pq = o(n)$. As a second application, in Section~\ref{sec:cov} we consider the sample variance--covariance matrix, a canonical example of a Gram-type matrix. Theorem~\ref{thm:inv_cov_error_rate} shows that the moment bounds for the estimation error of its inverse exhibit the same non-asymptotic rate as those of the sample covariance matrix (\cite{gram}) itself under a Gaussian design. Finally, Section~\ref {sec:optim} investigates the computational implications of our moment bounds through Gram-type linear systems. We analyze the expected iteration complexity of gradient descent for solving systems involving the sample Gram matrix. Theorem~\ref{thm:gd_gram_condition_number} establishes that, under Gaussian designs, the expected iteration complexity remains uniformly bounded whenever $p/n \to \gamma \in [0,\infty) \setminus \{1\}$. Moreover, we show that the expected iteration complexity is fundamentally governed by moments of the condition number, thereby providing a direct link between spectral stability and computational performance. In contrast, as $p/n \to 1$, the expected complexity diverges (Lemma \ref{lem:GD_worstcase}), revealing a phase-transition phenomenon consistent with the double descent behavior of the condition number.

In Section~\ref{sec_subg}, we discuss possible extensions of our results beyond the Gaussian design and clarify the inherent limitations. While many high-dimensional bounds derived under Gaussianity extend to sub-Gaussian designs via standard tail arguments, the behavior of the inverse of the smallest singular value is considerably more delicate. In particular, although the moment bounds for the largest singular value in Lemma~\ref{max_singular_value} extend to general sub-Gaussian settings (see Remark~\ref{remark:subg}), we show that the inverse-moment and condition-number results of Theorem~\ref{min_singular_value} and Theorem~\ref{theorem_condition_number} do \emph{not} hold in full generality for sub-Gaussian designs. Specifically, we construct an explicit \emph{continuous sub-Gaussian} counterexample (see Proposition~\ref{prop:subg_counterexample_r1}) for which all negative moments of the smallest singular value and all positive moments of the condition number of the corresponding design matrix are infinite. We further show that, under this construction, the associated inverse sample covariance estimation error also diverges. We relate these phenomena to the well-known role of \emph{small-ball probabilities} governing the lower tail behavior of the smallest singular value \cite{RudelsonVershynin2009SmallestSingularRectangular}, and explain why Gaussianity permits substantially sharper inverse-moment control than what is available under general sub-Gaussian small-ball theory.

The remainder of the paper is organized as follows. Section~\ref{sec:notation} introduces the notation and basic definitions used throughout the paper. The moment bounds for the smallest singular value and condition number (Theorems \ref{min_singular_value} and \ref{theorem_condition_number}) are developed in Section~\ref{sec:main_results}. Statistical implications of these bounds are examined in Section~\ref{sec:gram_app}, including applications to high-dimensional regression (Theorem \ref{thm:ridge_pred_bound_hd_multi}), covariance estimation (Theorem \ref{thm:inv_cov_error_rate}), and computational complexity (Theorem \ref{thm:gd_gram_condition_number}). Extensions beyond the Gaussian design, together with a discussion of fundamental limitations and counterexamples, are provided in Section~\ref{sec_subg}. Proofs of some of the auxiliary results are deferred to the supplementary material.

\subsection{Notation and definitions}\label{sec:notation}

\noindent Let us introduce some notation and definitions. For positive sequences $a_n$ and $b_n$,
we write $a_n = O(b_n)$ if there exists a constant $C$ such that
$a_n \leq C b_n$ for all $n \in \mathbb{N}$, and we write $a_n = \Omega(b_n)$ if there
exists a constant $C$ such that $a_n \geq C b_n$ for all $n \in \mathbb{N}$. We use $a_n = o(b_n)$ to denote that $\lim_{n \to \infty} \frac{a_n}{b_n} = 0$. Also, $a_n \sim b_n$ means that $\frac{a_n}{b_n} \to 1$ as $n \to \infty$.

Let $(X,d)$ be a metric space and let $\varepsilon>0$. A subset $N_\varepsilon\subseteq X$
is called an \emph{$\varepsilon$-net} of $X$ if for every $x\in X$ there exists
$y\in N_\varepsilon$ such that $d(x,y)\le \varepsilon$. The minimal cardinality of an
$\varepsilon$-net of $X$, if finite, is denoted by $N(X,\varepsilon)$ and is called the
\emph{covering number} of $X$.

In the following notation, $I_p$ denotes the identity matrix of order $p$. For any matrix $A\in\mathbb{R}^{n\times p}$, let $s_{\max}(A)$ denote its largest
singular value, and $s_{\min}(A)$ denote its $\min(n,p)^{th}$ singular value (when singular values are arranged in descending order). When $\mathrm{rank}(A) = \min(n,p)$, then $s_{\min}(A)$ corresponds to the smallest non-zero singular value of $A$. The condition number of $A$ is defined as $\kappa(A):=\frac{s_{\max}(A)}{s_{\min}(A)}$. 
If $A$ is a symmetric square matrix, then $\lambda_{\min}(A)$ and
$\lambda_{\max}(A)$ denote the smallest and largest eigenvalues of $A$. Also, we define the \emph{range} (or \emph{column space}) of $A$ as
\[
\operatorname{range}(A)
\;:=\;
\{\, Au : u \in \mathbb{R}^p \,\}
\;\subseteq\;
\mathbb{R}^n.
\]
Equivalently, $\operatorname{range}(A)$ is the linear span of the columns of $A$.

The unit Euclidean sphere in $\mathbb{R}^p$ is denoted by $\mathcal{S}^{p-1}$. For a
vector $x \in \mathbb{R}^p$, we denote its $r$-th norm by
$\normr{x}_r = \left(\sum_{j=1}^p \lvert x_j\rvert^r\right)^{1/r}$, and $\norm{x}$
denotes the Euclidean norm. For a $p\times p$ matrix $A=(A_{ij})_{1\leq i,j\leq p}$,
the spectral norm is defined as
$$
\norm{A}:=\underset{{u\in\mathcal{S}^{p-1}}}{\sup}\norm{A u}.
$$
We define the vectorization of the matrix $A$ as
$\mbox{vec}(A) = (A_{11},\dots,A_{p1},A_{12},\dots,A_{pp})^\top$.

\section{Main results}\label{sec:main_results} 


\noindent In this section, we derive several new results establishing upper bounds for the moments of the condition number, as well as the negative moments of the minimum singular value, of a Gaussian random matrix. These results are derived in a growing-dimension asymptotic regime under a general covariance structure. Specifically, we consider a random matrix $\tilde X_n \in \mathbb{R}^{n \times p}$ whose rows are independent and follow a multivariate Gaussian distribution with covariance matrix $\tilde\Sigma_n$. The sequence of matrices $\{\tilde\Sigma_n\}_{n \geq 1}$ are assumed to satisfy 
\begin{align}\label{cond-const}
    c_m \leq \underset{n \geq 1}{\inf} \, \lambda_{\min}(\tilde \Sigma_n) \leq \underset{n \geq 1}{\sup} \, \lambda_{\max}(\tilde \Sigma_n) \leq c_M
\end{align}
for some finite universal constants $c_m$ and $c_M$. The regularity condition in \eqref{cond-const} ensures that the sequence $\{\tilde\Sigma_n\}_{n \ge 1}$ remains uniformly well-conditioned \cite{bickel2008regularized}, in the sense that its eigenvalues are bounded away from both $0$ and $\infty$ uniformly over $n$. Assumptions of this type are standard in high-dimensional asymptotic analysis; see, for example, \cite{bickel2008regularized, spectrum, xiang, sarkar}. \textcolor{black}{Note that, since the rows of $\tilde X_n$ are assumed to be independent Gaussian, under the assumption \eqref{cond-const} we have $\operatorname{rank}(\tilde X_n) = \min(n,p)$ almost surely. Hence, $s_{\min}(\tilde X_n)$ denotes the smallest non-zero singular value of $\tilde X_n$ in this context.} Under this framework, we first establish Theorem~\ref{theorem_condition_number}, which demonstrates that the moments of the condition number $\kappa(\tilde X_n)$ remain uniformly bounded over $n$, provided that $p/n \to \gamma \in [0,1)$.

\begin{thm}[Moments of condition number of Gaussian random matrices]\label{theorem_condition_number}
Let $\kappa(\tilde X_n) = {s_{\max}(\tilde X_n)}/{s_{\min}(\tilde X_n)}$, where $\tilde X_n$ is an $n \times p$ random matrix with $p/n \to \gamma \in [0,1)$ as $n \to \infty$. Assume that the rows of $\tilde X_n$ are independent and identically distributed as $\Ncal(0, \tilde \Sigma_n)$, where $\tilde \Sigma_n$ satisfies the condition in (\ref{cond-const}). Then for any fixed $r\geq 0$ there exists a positive universal constant $K_1(r)$ (depends only on $r$, $\gamma$, $c_m$ and $c_M$) such that
\begin{align*}
    \EE [\{\kappa(\tilde X_n) \}^r] \leq K_1(r),
\end{align*}
for all $n$.
\end{thm}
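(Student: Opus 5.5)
We first reduce to the isotropic case. Write $\tilde X_n = X_n \tilde\Sigma_n^{1/2}$, where $X_n$ has i.i.d.\ $\Ncal(0,1)$ entries. Then
\[
s_{\max}(\tilde X_n)\le c_M^{1/2}s_{\max}(X_n), \qquad s_{\min}(\tilde X_n)\ge c_m^{1/2}s_{\min}(X_n),
\]
so $\kappa(\tilde X_n)\le (c_M/c_m)^{1/2}\kappa(X_n)$. It therefore suffices to bound $\EE[\kappa(X_n)^r]$ uniformly in $n$ for a standard Gaussian $n\times p$ matrix with $p/n\to\gamma\in[0,1)$. Since $\EE[\kappa^r]$ is monotone in $r$ by Jensen (up to the trivial bound $\kappa\ge1$), we may take $r\ge 1$. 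For $n$ large we have $p/n\le \gamma':=(1+\gamma)/2<1$. The finitely many small $n$ only need finiteness of $\EE[\kappa(X_n)^r]$, which follows from the same tail bound used below (or from known density formulas) for each fixed $n>p$.

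\textbf{Main route: tail integration.} Write $\EE[\kappa^r]=\int_0^\infty r t^{r-1}P(\kappa>t)\,dt$, and use the tail bound of \cite{Dongarra} for Gaussian condition numbers. For $n\ge p$ it has the form
\[
P\bigl(\kappa(X_n)> t\,\tfrac{n}{n-p+1}\bigr)\le \frac{1}{\sqrt{2\pi}}\Bigl(\frac{C}{t}\Bigr)^{n-p+1}
\]
with a universal $C$ (for example $C\approx 6.414$). Under $p\le\gamma' n$ the scale factor $n/(n-p+1)\le 1/(1-\gamma')$ is bounded, and the exponent $m:=n-p+1\ge (1-\gamma')n$ grows linearly. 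Setting $t_0=2C/(1-\gamma')$ and splitting the integral at $t_0$ gives
\[
\EE[\kappa^r]\le t_0^r+\int_{t_0}^\infty r t^{r-1}\Bigl(\frac{C'}{t}\Bigr)^{m}dt,
\]
where $C'=C/(1-\gamma')$. The second integral equals $r\,C'^m\,t_0^{r-m}/(m-r)$ once $m>r$, which is at most $r\,t_0^r 2^{-m}/(m-r)$ and hence bounded. For the finitely many $n$ with $m\le r$, we fall back on finiteness.

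\textbf{Main obstacle.} The difficulty is precisely the regime where $m=n-p+1$ is small relative to $r$: the polynomial tail $t^{-m}$ only controls moments of order less than $m$. Since $\gamma<1$ forces $m\to\infty$, only finitely many $n$ are affected, but for those we need an honest finiteness statement. For fixed $n,p$, finiteness of $\EE[\kappa^r]$ requires $r<n-p+1$ (the density of $s_{\min}^2$ behaves like $x^{(n-p-1)/2}$ near $0$). So for fixed $r$ the uniform constant must be understood as holding for all $n$ with $n-p+1>r$, which is automatic for large $n$, or else the constant must absorb these finitely many finite values.

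\textbf{Alternative decoupling route.} If the Edelman–Dongarra bound is unavailable in the needed form, we would use Cauchy–Schwarz to decouple:
\[
\EE[\kappa^r]\le \bigl(\EE[s_{\max}^{2r}]/n^{r}\bigr)^{1/2}\bigl(\EE[(\sqrt n/s_{\min})^{2r}]\bigr)^{1/2}.
\]
The first factor follows from Gaussian concentration of $s_{\max}$ around at most $\sqrt n+\sqrt p$ (Lemma~\ref{max_singular_value}). The second factor requires a small-ball bound of the form $P(s_{\min}\le\varepsilon(\sqrt n-\sqrt{p-1}))\le (C\varepsilon)^{n-p+1}$, again with the exponent $n-p+1$ growing, obtained by an $\varepsilon$-net argument in the Gaussian case without the additive $e^{-cn}$ term.
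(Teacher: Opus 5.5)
Your main route has a genuine gap: it uses the Chen--Dongarra tail bound outside the range where it is proved. Theorem 4.5 of \cite{Dongarra} gives $\mathbb{P}\bigl(\kappa(X_n)>x\,\tfrac{n}{n-p+1}\bigr)\le (2\pi)^{-1/2}(C/x)^{n-p+1}$ only for $x\ge n-p+1$. That means it covers only thresholds $t=x\,n/(n-p+1)\ge n$ on $\kappa$ itself, which is why the paper invokes it only on $[n,\infty)$. You dropped this restriction and applied the bound for every $t\ge t_0=2C/(1-\gamma')$. Since $n-p+1\sim(1-\gamma)n\to\infty$, the whole window $t\in[t_0,n)$ is left uncovered. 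There you only have $\mathbb{P}(\kappa>t)\le 1$, so the window's contribution to $\int r t^{r-1}\mathbb{P}(\kappa>t)\,dt$ is $n^r-t_0^r$, which is not uniformly bounded. Your ``main obstacle'' paragraph therefore misplaces the difficulty. It is not the finitely many $n$ with $n-p+1\le r$; it is this intermediate range of $t$, which is present for every $n$.

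The paper closes exactly this window with concentration of the extreme singular values. With $b_n=\sqrt n-\sqrt p$ and $t_\star\le t\le n$, \cite[Corollary 5.35]{vershynin2011introductionnonasymptoticanalysisrandom} gives
\[
\mathbb{P}(\kappa>t)\le \mathbb{P}(s_{\min}\le b_n/2)+\mathbb{P}(s_{\max}>t\,b_n/2)\le e^{-c_2 n}+e^{-c_4 n t^2}.
\]
So the window contributes at most $n^r e^{-c_2 n}+O(n^{-r/2})$, and Chen--Dongarra is needed only on $[n,\infty)$.

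Your alternative decoupling route would also work; it is the combination the paper points out in its remark following Lemma~\ref{max_singular_value}. As written, though, its decisive ingredient is only asserted: a Gaussian lower-tail bound for $s_{\min}$ with no additive $e^{-cn}$ term, or equivalently a uniform bound on $\mathbb{E}[(\sqrt n/s_{\min})^{2r}]$. That is the content of Theorem~\ref{min_singular_value}. It uses the exact inverse-$\chi^2_{n-p+1}$ law of $u^\top(X_n^\top X_n)^{-1}u$ together with a $\tfrac{1}{3}$-net of size $7^p$. The factor $7^p$ can be absorbed only because $p/(n-p)$ stays bounded, so the resulting constant depends on $\gamma$.

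Finally, your observation that $\mathbb{E}[\kappa^r]=\infty$ whenever $n-p+1\le r$ is correct. ``For all $n$'' can only mean all $n$ with $n-p+1>r$, and the paper's step of enlarging the constant for finitely many $n$ tacitly needs this too.
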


\noindent Before providing the formal proof, we present a high-level overview of the proof strategy. As noted in the introduction, \cite{Dongarra} established an asymptotic tail bound for the condition number $\kappa(Z_n)$ of an $n \times p$ Gaussian random matrix $Z_n$ with i.i.d.\ $\mathcal{N}(0,1)$ entries. In particular, for $t > p$, they showed that the tail probability $\mathbb{P}(\kappa(Z_n) > t)$ exhibits polynomial decay of order $t^{-(n-p+1)}$. Although this result does not directly imply bounds of the form stated in Theorem~\ref{theorem_condition_number}, it provides a crucial starting point. When combined with additional nontrivial probabilistic arguments, it can be leveraged to derive the required moment bounds for the condition number. This is formalized in the proof below.

\begin{proof}
Write $\tilde X_n = Z_n \tilde \Sigma_n^{1/2}$, where $Z_n$ is an $n\times p$
random matrix with i.i.d.\ $\mathcal{N}(0,1)$ entries. Using standard singular
value inequalities,
\[
s_{\max}(AB)\le s_{\max}(A)s_{\max}(B),
\qquad
s_{\min}(AB)\ge s_{\min}(A)s_{\min}(B),
\]
and noting that
$s_{\max}(\tilde \Sigma_n^{1/2})=\lambda_{\max}(\tilde \Sigma_n)^{1/2}$ and
$s_{\min}(\tilde \Sigma_n^{1/2})=\lambda_{\min}(\tilde \Sigma_n)^{1/2}$, we obtain
\[
s_{\max}(\tilde X_n)
\le s_{\max}(Z_n)\,\lambda_{\max}(\tilde \Sigma_n)^{1/2},
\qquad
s_{\min}(\tilde X_n)
\ge s_{\min}(Z_n)\,\lambda_{\min}(\tilde \Sigma_n)^{1/2}.
\]
Consequently,
\[
\kappa(\tilde X_n)
=\frac{s_{\max}(\tilde X_n)}{s_{\min}(\tilde X_n)}
\le
\kappa(Z_n)\left(\frac{\lambda_{\max}(\tilde \Sigma_n)}{\lambda_{\min}(\tilde \Sigma_n)}\right)^{1/2}
\le
\kappa(Z_n)\left(\frac{c_M}{c_m}\right)^{1/2},
\]
where the last inequality follows from \eqref{cond-const}. Raising both sides to
the power $r$ and taking expectations yields
\begin{align}\label{thm_3.1_1}
   \mathbb{E}\big[\{\kappa(\tilde X_n)\}^r\big]
\le \left(\frac{c_M}{c_m}\right)^{r/2}\mathbb{E}\big[\{\kappa(Z_n)\}^r\big]. 
\end{align}

\noindent Therefore, it is enough to bound $\mathbb{E}\big[\{\kappa(Z_n)\}^r\big]$. Since $\kappa(Z_n)\ge 1$, the tail integration identity yields
\begin{equation}\label{eq:tail_int_kappa}
\mathbb{E}\big[\{\kappa(Z_n)\}^r\big]
=
\int_{0}^{\infty} r t^{r-1}\,\mathbb{P}\big(\kappa(Z_n)>t\big)\,dt.
\end{equation}

\noindent We next bound the two probability terms in the following inequalities separately. Define
\[
b_n := \sqrt n-\sqrt p,
\qquad
\sigma_n := \frac{b_n}{2}.
\]
Then for any $t\ge 1$, 
\[
\{\kappa(Z_n)>t\}
=
\left\{\frac{s_{\max}(Z_n)}{s_{\min}(Z_n)}>t\right\}
\subseteq
\{s_{\min}(Z_n)\le \sigma_n\}\ \cup\ \{s_{\max}(Z_n)>t\sigma_n\},
\]
and hence
\begin{equation}\label{eq:kappa_tail_union}
\mathbb{P}(\kappa(Z_n)>t)
\le
\mathbb{P}\big(s_{\min}(Z_n)\le \sigma_n\big)
+
\mathbb{P}\big(s_{\max}(Z_n)>t\sigma_n\big).
\end{equation}
For all sufficiently large $n$ (so that $n>p$ and $\sigma_n>0$),
\[
\mathbb{P}\big(s_{\min}(Z_n)\le \sigma_n\big)
=
\mathbb{P}\!\left(s_{\min}(Z_n)\le b_n-\frac{b_n}{2}\right)
\le
\exp\!\left(-c_1 \frac{b_n^2}{4}\right)
\le \exp(-c_2 n),
\]
for some positive constants $c_1$ and $c_2$ (depending only on $\gamma$), where the last two inequalities follow from \cite[Corollary 5.35]{vershynin2011introductionnonasymptoticanalysisrandom} and 
$b_n^2=(\sqrt n-\sqrt p)^2 \sim (1-\sqrt{\gamma})^2 n$ under $p/n\to\gamma\in[0,1)$.

Let $a_n:=\sqrt n+\sqrt p$. By the standard upper tail bound for $s_{\max}(Z_n)$ ( see \cite[Corollary 5.35]{vershynin2011introductionnonasymptoticanalysisrandom}), there exist positive constants $c_3$ and $c_4$ (depending only on $\gamma$) and a constant $t_\star=t_\star(\gamma)\ge 1$
such that, for all sufficiently large $n$ and all $t\ge t_\star$, it holds that
\[
\mathbb{P}\big(s_{\max}(Z_n)>t\sigma_n\big)
=
\mathbb{P}\big(s_{\max}(Z_n)>a_n+(t\sigma_n-a_n)\big)
\le
\exp\!\big(-c_3(t\sigma_n-a_n)_+^2\big) \le \exp(-c_4 n t^2),
\]
where $(t\sigma_n-a_n)_+=\max\{(t\sigma_n-a_n),0\}$, and the last inequality follows from \cite[Corollary 5.35]{vershynin2011introductionnonasymptoticanalysisrandom} together with 
$a_n^2=(\sqrt n+\sqrt p)^2 \sim (1+ \sqrt{\gamma})^2 n$ under $p/n\to\gamma\in[0,1)$.
Combining the above bounds yields that, for all $t\ge t_\star$ and all sufficiently large $n$,
\begin{equation}\label{eq:kappa_tail_final}
\mathbb{P}(\kappa(Z_n)>t)
\le
\exp(-c_2 n)+\exp(-c_4 n t^2).
\end{equation}

\noindent We now use \eqref{eq:kappa_tail_final} to bound \eqref{eq:tail_int_kappa}. We split the integral in \eqref{eq:tail_int_kappa} into three parts $I_1,I_2$, and $I_3$ as defined below, and show separately that each term is uniformly bounded for large $n$. The final result will follow by enlarging the constants to cover the remaining finite number of indices of $n$.

\begin{align}
\mathbb{E}\big[\{\kappa(Z_n)\}^r\big]
&=
\underbrace{\int_{0}^{t_\star} r t^{r-1}\,\mathbb{P}(\kappa(Z_n)>t)\,dt}_{=:I_1}
\quad+
\underbrace{\int_{t_\star}^{n} r t^{r-1}\,\mathbb{P}(\kappa(Z_n)>t)\,dt}_{=:I_2}\nonumber\\
&
+
\underbrace{\int_{n}^{\infty} r t^{r-1}\,\mathbb{P}(\kappa(Z_n)>t)\,dt}_{=:I_3}.
\label{eq:split_I1I2I3}
\end{align}

\smallskip
\noindent\emph{Bound for $I_1$.}
Since $\mathbb{P}(\kappa(Z_n)>t)\le 1$,
\[
I_1
\le
\int_0^{t_\star} r t^{r-1}\,dt
=
t_\star^r<\infty.
\]

\smallskip
\noindent\emph{Bound for $I_2$.}
Using \eqref{eq:kappa_tail_final},
\begin{align*}
I_2
&\le
\int_{t_\star}^{n} r t^{r-1}\Big(\exp(-c_2 n)+\exp(-c_4 n t^2)\Big)\,dt \\
&\le
n^r \exp(-c_2 n)
+
\int_{0}^{\infty} r t^{r-1}\exp(-c_4 n t^2)\,dt\\
&=
n^r \exp(-c_2 n) + \frac{\Gamma(r/2)}{2(c_4 n)^{r/2}}.
\end{align*}
Since $n^r\exp(-c_2 n)\to 0$, it follows that $I_2 \to 0$ as $n \to \infty$ for any fixed $r>0$.

\smallskip
\noindent\emph{Bound for $I_3$.}
Note that for $t \ge n$ and a constant $ C \in [5.013, 6.414]$, it follows from \cite[Theorem 4.5]{Dongarra} and for sufficiently large $n$ that
\begin{align}
P(\kappa(Z_n) \ge t) \le \frac{1}{\sqrt{2\pi}} \left( \frac{ Cp}{7t(n - p + 1)} \right)^{n - p + 1}. \label{eq:tail_bound}
\end{align}
Hence, it follows that
\begin{align}
I_3 &\le \int_{n}^{\infty} r t^{r-1} P(\kappa(Z_n) > t) dt \nonumber \\
&\le \frac{r}{\sqrt{2\pi}} \left( \frac{ Cp}{7(n - p + 1)} \right)^{n - p + 1} \int_{n}^{\infty} \frac{dt}{t^{(n - p - r) + 1}} \\
&= \frac{r}{\sqrt{2\pi}} \left( \frac{ Cp}{7(n - p + 1)} \right)^{n - p + 1} n^{-(n - p - r)}.
\end{align}

\noindent Hence $I_3$ converges to $0$ as $n \rightarrow \infty$, since $\frac{p}{n} \rightarrow \gamma \in [0, 1)$ and $C/7 \in (0,1)$ where $r > 0$ is fixed.
The proof follows by combining the bounds obtained for $I_1$, $I_2$, and $I_3$ together with \eqref{thm_3.1_1}. In particular, for sufficiently large $n$, these bounds imply the existence of a constant $K_1(r)$, depending only on $r$, $\gamma$, $c_m$, and $c_M$, such that
\begin{align*}
    \EE\!\left[\{\kappa(\tilde X_n)\}^r\right] \leq K_1(r).
\end{align*}
The argument is completed by, if necessary, enlarging the constant to account for the finitely many indices $n$ below this threshold.

\end{proof}

\begin{remark}[Expected logarithm of the condition number]
For rectangular Gaussian matrices, \cite{Dongarra} also provides an explicit bound on the expected logarithm of the condition number. Specifically, for $n,p\geq 2$, if $Z_n$ is an $n \times p$ matrix having i.i.d.\ $\mathcal{N}(0,1)$ entries, then their result implies
\[
\mathbb{E}\big[\log(\kappa(Z_n))\big]
\le
\log\!\left(\frac{n}{n-p+1}\right)+2.258.
\]
On the other hand, our proof establishes that $\mathbb{E}[\{\kappa(Z_n)\}^r]$ is uniformly
bounded for any fixed $r>0$ under $p/n\to\gamma\in[0,1)$. In particular, taking $r=1$ and
applying Jensen's inequality (since $\log(\cdot)$ is concave), we obtain
\[
\mathbb{E}\big[\log(\kappa(Z_n))\big]\le \log\!\Big(\mathbb{E}\big[\kappa(Z_n)\big]\Big)<\infty,
\]
for every $n$. Thus, while \cite{Dongarra} provides an explicit non-asymptotic upper bound for
$\mathbb{E}\big[\log(\kappa(Z_n))\big]$, our moment bound yields a complementary
justification of the finiteness of $\mathbb{E}\big[\log(\kappa(Z_n))\big]$ in the same
regime $p/n\to\gamma\in[0,1)$.
\end{remark}

\noindent Next, we extend Theorem \ref{theorem_condition_number} to the case $p/n \to \gamma \in [1,\infty)$. In particular, when $\gamma=1$, the smallest singular value of $\widetilde X_n$ becomes arbitrarily small, which leads to a sharp increase in $\kappa(\widetilde X_n)$. Consequently, in the critical regime $\gamma=1$, one has $\kappa(\widetilde X_n)$ diverges to $\infty$, and as the next result shows, even $\mathbb{E}\!\left[\{\kappa(\widetilde X_n)\}^r\right]$ diverges to $\infty$ for any $r > 0$. This reveals an ``expectation blow-up'' of the condition number at $\gamma=1$. In contrast, when $\gamma \in (1,\infty)$, the next result establishes uniform boundedness of the moments of $\kappa(\widetilde X_n)$ analogous to Theorem \ref{theorem_condition_number}. This shows that for large $n$ the expectation of the condition number remains
uniformly bounded when $\gamma\in[0,1)$, but as soon as $\gamma=1$ it spikes to
$\infty$, and then remains bounded again for $\gamma\in(1,\infty)$. This
phenomenon can be viewed as a manifestation of the well-known \emph{double
descent property} \cite{poggio2020doubledescentconditionnumber} in the expected
condition number. The next lemma formally establishes this property.

\begin{lem}[Double descent property of moments of condition number]\label{lem:double_descent}
Suppose $\tilde X_n$ is an $n\times p$ random matrix whose rows are i.i.d.\ $\mathcal{N}(0,\tilde\Sigma_n)$,
and $\tilde\Sigma_n$ satisfies \eqref{cond-const}. Then the following hold.
\begin{enumerate}[(i)]
\item (\emph{Wide-matrix regime}) If $p/n\to\gamma\in(1,\infty)$ as $n\to\infty$,
then for any fixed $r\ge 0$ there exists a constant $K_1^{\mathrm{row}}(r)>0$ such
that
\[
\mathbb{E}\big[\{\kappa(\tilde X_n)\}^r\big]\le K_1^{\mathrm{row}}(r),
\]
for all $n$.

\item (\emph{Critical regime}) If $p/n\to 1$ as $n\to\infty$, then for every fixed
$r>0$,
\[
\mathbb{E}\big[\{\kappa(\tilde X_n)\}^r\big]\to\infty\qquad\text{as }n\to\infty.
\]
\end{enumerate}
\end{lem}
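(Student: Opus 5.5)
For part (i), I will reduce the wide-matrix regime to Theorem~\ref{theorem_condition_number} by transposition. If $p>n$, then $\tilde X_n$ has rank $n$ almost surely. Its nonzero singular values coincide with those of $\tilde X_n^\top$, so $\kappa(\tilde X_n)=\kappa(\tilde X_n^\top)$. The transpose $\tilde X_n^\top\in\mathbb{R}^{p\times n}$ does not have i.i.d.\ Gaussian rows, so I will not use it directly. Instead I will write $\tilde X_n = Z_n\tilde\Sigma_n^{1/2}$ and bound the two singular values separately:
\[
s_{\max}(\tilde X_n)\le c_M^{1/2}s_{\max}(Z_n).
\]
For the smallest nonzero singular value, note that
\[
\tilde X_n\tilde X_n^\top = Z_n\tilde\Sigma_n Z_n^\top \succeq c_m Z_nZ_n^\top,
\qquad\text{so}\qquad
s_{\min}(\tilde X_n)\ge c_m^{1/2}s_{\min}(Z_n),
\]
where $s_{\min}$ is the $n$-th singular value. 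Hence $\kappa(\tilde X_n)\le (c_M/c_m)^{1/2}\kappa(Z_n^\top)$. Now $Z_n^\top$ is a $p\times n$ matrix with i.i.d.\ $\mathcal{N}(0,1)$ entries, and $n/p\to 1/\gamma\in(0,1)$. Theorem~\ref{theorem_condition_number}, applied with identity covariance and with the roles of $n$ and $p$ swapped (indexing by $p$, which grows with $n$), gives a uniform bound on $\mathbb{E}[\kappa(Z_n^\top)^r]$. The only care needed is that the theorem's constant depends on $1/\gamma$ only, and that finitely many small $n$ are absorbed by enlarging the constant. For those finitely many $n$, each moment is finite because the tail bound \eqref{eq:tail_bound} decays polynomially with order $|n-p|+1$. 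If that order is not larger than $r$, I will simply enlarge the threshold index instead.

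For part (ii), I will produce a lower bound that blows up, treating $n\ge p$ and $p>n$ symmetrically. First I reduce to $Z_n$ in the opposite direction:
\[
\kappa(\tilde X_n)\ge (c_m/c_M)^{1/2}\kappa(Z_n),
\]
using $s_{\max}(\tilde X_n)\ge c_m^{1/2}s_{\max}(Z_n)$ and $s_{\min}(\tilde X_n)\le c_M^{1/2}s_{\min}(Z_n)$. The second inequality follows from Courant--Fischer applied to $Z_n\tilde\Sigma_nZ_n^\top\preceq c_M Z_nZ_n^\top$ (or its $p\times p$ analogue). Next, on the event $\{s_{\max}(Z_n)\ge \sqrt{\max(n,p)}/2\}$, which has probability tending to one, we have $\kappa(Z_n)\ge \sqrt{n}/(2s_{\min}(Z_n))$. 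It therefore suffices to show that $\sqrt n\, s_{\min}(Z_n)^{-1}\to\infty$ in probability. Fatou's lemma, or the simple bound $\mathbb{E}[\kappa^r]\ge M^r\,\mathbb{P}(\kappa>M)$ for each fixed $M$, then gives divergence of every positive moment.

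The main obstacle is showing $s_{\min}(Z_n)=o_P(\sqrt n)$ when $p/n\to1$ with an arbitrary relation between $n$ and $p$. I will first try the Bai--Yin/Marchenko--Pastur fact that $s_{\min}(Z_n)/\sqrt n\to|1-\sqrt\gamma|=0$ almost surely. This covers every sequence with $p/n\to1$, including those with $|n-p|\to\infty$ sublinearly. If a self-contained argument is preferred, I would use the upper tail estimate
\[
\mathbb{P}\bigl(s_{\min}(Z_n)\ge \sqrt{\max(n,p)}-\sqrt{\min(n,p)}+t\bigr)\le e^{-ct^2}
\]
from \cite[Corollary 5.35]{vershynin2011introductionnonasymptoticanalysisrandom}. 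With $t=\epsilon\sqrt n/2$, and since $\sqrt{\max(n,p)}-\sqrt{\min(n,p)}=o(\sqrt n)$ when $p/n\to1$, this gives $\mathbb{P}(s_{\min}(Z_n)\ge\epsilon\sqrt n)\to0$ for every $\epsilon>0$, which is exactly what is needed.
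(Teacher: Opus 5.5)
Your proposal is correct and follows the paper's route: transposition for (i), and Bai--Yin followed by a truncation argument for (ii). Your bound $\mathbb{E}[\kappa^r]\ge M^r\,\mathbb{P}(\kappa>M)$ replaces the paper's dominated-convergence step and only needs convergence in probability.

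Passing to $Z_n$ before transposing is in fact necessary, not just extra caution. The rows of $\tilde X_n^\top$ are the columns of $\tilde X_n$, which are correlated unless $\tilde\Sigma_n$ is diagonal. So Theorem~\ref{theorem_condition_number} does not literally apply to $\tilde X_n^\top$, as the paper's one-line argument asserts.

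Your hedge about small $n$ is also warranted. An index with $p=n\ge 2$ already makes $\mathbb{E}[\kappa(\tilde X_n)]$ infinite, so in general only a bound for all sufficiently large $n$ can hold.

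The one slip is in your optional self-contained route for (ii). \cite[Corollary 5.35]{vershynin2011introductionnonasymptoticanalysisrandom} gives a lower tail for $s_{\min}$ and an upper tail for $s_{\max}$, but no upper tail for $s_{\min}$. If you want to avoid Bai--Yin, use the following instead. The value $s_{\min}(Z_n)$ is at most the distance from one column of $Z_n$ (of $Z_n^\top$ if $p>n$) to the span of the remaining columns, and the square of that distance is $\chi^2_{|n-p|+1}$. Hence $\mathbb{E}[s_{\min}(Z_n)^2]\le |n-p|+1=o(n)$, and Markov's inequality gives $\mathbb{P}(s_{\min}(Z_n)\ge \epsilon\sqrt n)\to 0$ for every $\epsilon>0$.
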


\begin{proof}
First recall that \eqref{cond-const} implies $\tilde\Sigma_n\succ 0$ for all $n$,
and hence $\mathrm{rank}(\tilde X_n)=\min(n,p)$ almost surely.

\begin{enumerate}[(i)]
    \item When $p/n\to\gamma\in(1,\infty)$, we have $n/p\to \gamma^{-1}\in(0,1)$. Since the singular values of $\tilde X_n$ and $\tilde X_n^\top$ coincide, it follows that $\kappa(\tilde X_n)=\kappa(\tilde X_n^\top)$. Therefore, applying Theorem~\ref{theorem_condition_number} to the transposed matrix $\tilde X_n^\top$ yields the stated uniform bound.

   \item  Consider the critical regime $p/n\to 1$. By \eqref{cond-const} we may write
$\tilde X_n=Z_n\tilde\Sigma_n^{1/2}$ with $Z_n$ having i.i.d.\ $\mathcal{N}(0,1)$
entries. Moreover, by \eqref{cond-const} there exists a constant $C$ (depends only on $c_m$ and $c_M$) such that
\[
\kappa(\tilde X_n)\ge C\,\kappa(Z_n)\qquad\text{for all }n.
\]
By the Bai--Yin law \cite{bai_yin} in the regime $p/n\to 1$,
\[
\frac{s_{\max}(Z_n)}{\sqrt n}\to 2,
\qquad\text{and}\qquad
\frac{s_{\min}(Z_n)}{\sqrt n}\to 0,
\qquad\text{a.s.}
\]
hence $\kappa(Z_n)\to\infty$ almost surely, and therefore
$\kappa(\tilde X_n)\to\infty$ almost surely. Thus $\kappa(\tilde X_n)^r\to\infty$
a.s.\ for any fixed $r>0$. For every $M>0$,
$\min\{\kappa(\tilde X_n)^r,M\}\to M$ a.s.\ and
$0\le \min\{\kappa(\tilde X_n)^r,M\}\le M$, so by dominated convergence theorem,
\[
\lim_{n\to\infty}\mathbb{E}\!\left[\min\{\kappa(\tilde X_n)^r,M\}\right]=M.
\]
Since $\min\{\kappa(\tilde X_n)^r,M\}\le \kappa(\tilde X_n)^r$, we obtain
$\liminf_{n\to\infty}\mathbb{E}[\kappa(\tilde X_n)^r]\ge M$ for all $M>0$, and
therefore $\mathbb{E}[\kappa(\tilde X_n)^r]\to\infty$.

\end{enumerate}
\end{proof}

\noindent
Uniform boundedness of the condition number moments alone is not sufficient for asymptotic evaluations in many statistical applications (see Section~\ref{sec:gram_app}). In addition to the uniform boundedness of the condition number established in Theorem~\ref{theorem_condition_number}, such evaluations repeatedly rely on more refined spectral controls. Specifically, the developments in Section~\ref{sec:gram_app} require: (i) bounds on the negative moments of the smallest singular value, which govern the stability of inverses and lower spectral regularity, and (ii) bounds on the positive moments of the largest singular value, which control the magnitude of the spectral norm. 

We begin with the lower end of the singular-value spectrum. 
Theorem~\ref{min_singular_value} controls negative moments of $s_{\min}
(X_n)$ when $X_n$ has i.i.d.\ $\Ncal(0,1)$ entries. The bound has two parts: a leading term of order $(n-p-1)^{-r/2}$ and an additional term that decays exponentially fast in $(n-p-r+1)$. The leading term captures the dependence of the negative moments of the minimum singular value on the gap between the sample size and the dimension, while the exponentially small remainder reflects that extremely small values of $s_{\min}(X_n)$ become rapidly less likely as $n - p$ grows. In particular, under the growing dimension regime i.e. when $p/n\to\gamma\in[0,1)$, the lemma implies a uniform bound on $\EE\big[(\sqrt{n}/s_{\min}(X_n))^r\big]$, which is the form needed later when inverse-type quantities appear.
\begin{thm}\label{min_singular_value}
Let $X_n\in\mathbb{R}^{n\times p}$ be a random matrix with i.i.d.\ 
entries distributed as $\mathcal{N}(0,1)$. Fix a number $r\ge 0$, and 
assume that $n>p+r-1$. Then there exist (explicitly known) constants $c_1 (r),c_3 (r) >0$ and $c_2\in(0,1)$ such that
\[
\mathbb{E}\!\left[s_{\min}(X_n)^{-r}\right]
\le
\frac{c_1 (r)}{(n-p-1)^{r/2}}
+
\frac{c_3(r) \,c_2^{(n-p-r+1)/2}}{(n-p-1)^{(r-4)/2}} .
\]
In particular, if $p/n\to\gamma\in[0,1)$, then there exists a universal constant
$K_2(r)>0$ (depends only on $r$ and $\gamma$) such that
\[
\mathbb{E}\!\left[\left(\frac{\sqrt{n}}{s_{\min}(X_n)}\right)^r\right]\le K_2(r) .
\]
\end{thm}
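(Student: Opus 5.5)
We plan to work through the tail-integration identity
\[
\mathbb{E}\big[s_{\min}(X_n)^{-r}\big]=\int_0^\infty r u^{r-1}\,\mathbb{P}\big(s_{\min}(X_n)^{-1}>u\big)\,du
=\int_0^\infty r u^{r-1}\,\mathbb{P}\big(s_{\min}(X_n)<1/u\big)\,du .
\]
This reduces the problem to a sharp small-ball bound for $s_{\min}(X_n)$ in the Gaussian case, with no additive exponential term. The main input is a Gaussian small-ball inequality of the form
\[
\mathbb{P}\big(s_{\min}(X_n)\le \varepsilon\sqrt{n-p+1}\big)\le (C\varepsilon)^{n-p+1},
\qquad \varepsilon>0 .
\]
This is the analogue of the condition-number tail of \cite[Theorem 4.5]{Dongarra} used in the proof of Theorem~\ref{theorem_condition_number}. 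It follows from the explicit joint density of the Wishart eigenvalues, or from Edelman's bound on the density of $\lambda_{\min}(X_n^\top X_n)$. The idea is that $\lambda_{\min}$ has density behaving like $\lambda^{(n-p-1)/2}$ near zero, with a normalizing constant controlled by Gamma-function ratios. If a clean citation is not available, I would derive it by integrating the density bound $f_{\lambda_{\min}}(\lambda)\le C_{n,p}\,\lambda^{(n-p-1)/2}e^{-\lambda/2}$ and estimating $C_{n,p}$ with Stirling's formula.

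With such a bound, the integral splits at $u_0\asymp 1/\sqrt{n-p-1}$.
\begin{itemize}
\item On $[0,u_0]$, bounding the probability by $1$ gives $u_0^r=c_1(r)(n-p-1)^{-r/2}$. This is the leading term.
\item On $[u_0,\infty)$, insert the small-ball bound with $\varepsilon=1/(u\sqrt{n-p+1})$. This gives
\[
\int_{u_0}^\infty r u^{r-1}\big(C/(u\sqrt{n-p+1})\big)^{n-p+1}\,du .
\]
This is finite exactly when $n-p+1>r$, which explains the hypothesis $n>p+r-1$. It evaluates to a constant times $u_0^{\,r-(n-p+1)}(n-p+1)^{-(n-p+1)/2}C^{n-p+1}/(n-p+1-r)$.
\end{itemize}
Choosing $u_0=\beta/\sqrt{n-p-1}$ with $\beta$ large enough that $C/\beta<1$ turns the second piece into $c_3(r)\,c_2^{(n-p-r+1)/2}$ times a power of $(n-p-1)$. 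The polynomial prefactors, including the $1/(n-p+1-r)$ factor and the ratios between $n-p\pm1$, are absorbed into the $(n-p-1)^{-(r-4)/2}$ factor. This matches the shape of the stated bound, with $c_2=(C/\beta)^2\in(0,1)$.

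For the asymptotic statement, take $p/n\to\gamma\in[0,1)$. Then $n-p-1\ge c(1-\gamma)n$ for large $n$, so the first term gives $n^{r/2}\mathbb{E}[s_{\min}^{-r}]\le c_1(r)\big(n/(n-p-1)\big)^{r/2}$, which is bounded. The second term contributes $n^{r/2}\cdot n^{-(r-4)/2}c_2^{\Theta(n)}=n^2c_2^{\Theta(n)}\to0$. For the finitely many small $n$, including any with $n\le p+r-1$, we enlarge the constant. This requires finiteness of each moment once $n>p+r-1$, which already follows from the density bound; the regime $p/n\to\gamma<1$ ensures $n-p\to\infty$, so only finitely many $n$ are involved. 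This yields $K_2(r)$.

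The main obstacle is the small-ball step. Specifically, we need a non-asymptotic bound on the lower tail of $s_{\min}$ that holds for \emph{all} $\varepsilon$ down to $0$, with the exponent $n-p+1$ and a constant uniform in $n$ and $p$. The Rudelson--Vershynin bound is unusable here because of its additive $e^{-cn}$ term, which would make the integral over large $u$ diverge. I would first try to extract this bound directly from the Wishart smallest-eigenvalue density via Gamma-function asymptotics. The delicate part is keeping the ratio $\Gamma(\cdot)$ constants uniformly of the form $C^{n-p+1}(n-p+1)^{-(n-p+1)/2}$ times polynomial factors.
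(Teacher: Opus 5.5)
Your route differs from the paper's. The paper never uses the density of $\lambda_{\min}(W_n)$, with $W_n=X_n^\top X_n$. Instead it takes a $\tfrac13$-net of $\mathcal{S}^{p-1}$, uses that $u^\top W_n^{-1}u$ is exactly Inverse-$\chi^2_{n-p+1}$ for each fixed unit $u$, and union-bounds over $7^p$ net points. Your integration and splitting are fine. However, the small-ball lemma you rely on, $\mathbb{P}(s_{\min}(X_n)\le\varepsilon\sqrt{n-p+1})\le(C\varepsilon)^{n-p+1}$ with $C$ independent of $n$ and $p$, is false, and the Stirling step you flagged is exactly where it breaks. In Edelman's constant the ratio $\Gamma(\frac{n+1}{2})/\Gamma(\frac{p}{2})$ is of order $n^{(n-p+1)/2}$ (up to $C^{n-p+1}$), not $(n-p+1)^{(n-p+1)/2}$. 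So with $k=n-p+1$, integrating the density bound gives only $\mathbb{P}(s_{\min}(X_n)\le\varepsilon\sqrt{k})\le(C\varepsilon\sqrt{n/k})^{k}$. A universal constant appears only at the scale $\sqrt n-\sqrt{p-1}=k/(\sqrt n+\sqrt{p-1})$, which is comparable to $\sqrt k$ only when $p/n$ stays bounded away from $1$. No sharper estimate can rescue your version. Integrated as in your plan with $r=4$, it would give $\mathbb{E}[s_{\min}(X_n)^{-4}]\lesssim(n-p-1)^{-2}$. But $\operatorname{tr}(W_n^{-2})\le p\,s_{\min}(X_n)^{-4}$ together with the inverse-Wishart moment $\mathbb{E}[W_n^{-2}]=\frac{n-1}{(n-p)(n-p-1)(n-p-3)}I_p$ gives
\[
\mathbb{E}\big[s_{\min}(X_n)^{-4}\big]\ \ge\ \frac{n-1}{(n-p)(n-p-1)(n-p-3)},
\]
which is unbounded as $n\to\infty$ with $n-p\ge 4$ fixed.

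The same display shows that the first inequality of the theorem cannot hold with $c_1,c_2,c_3$ depending only on $r$, so this is not a gap you could close. The paper's own argument has the corresponding hidden dependence: it rewrites $7^p(3e/K)^{(n-p-r+1)/2}$ as $(21e/K)^{(n-p-r+1)/2}$, which requires $p\le(n-p-r+1)/2$.

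Here is what your method does give. Taking $C=C(\gamma)$, which is legitimate once $p/n\le\gamma'<1$, you obtain the first inequality with $\gamma$-dependent constants, and hence the asymptotic claim, which is all that is used later. Better still, run your argument at the correct scale. The same density bound yields $\mathbb{P}(s_{\min}(X_n)\le\varepsilon(\sqrt n-\sqrt{p-1}))\le(C\varepsilon)^{n-p+1}$ with a universal $C$ and no exponential term. Splitting at $u_0=\beta/(\sqrt n-\sqrt{p-1})$ then gives $\mathbb{E}[s_{\min}(X_n)^{-r}]\le C(r)(\sqrt n-\sqrt{p-1})^{-r}$ for all $n>p+r-1$. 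This is a dimension-free bound that the net argument cannot reach because of its $7^p$ loss. It implies the asymptotic claim at once, since $\sqrt n-\sqrt{p-1}\ge(1-\sqrt{\gamma'})\sqrt n$.

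Finally, the indices with $n\le p+r-1$ cannot be absorbed by enlarging $K_2(r)$. There the $r$-th moment is infinite, because the density of $\lambda_{\min}(W_n)$ behaves like $\lambda^{(n-p-1)/2}$ near $0$. The uniform bound must therefore be read over $n>p+r-1$, which holds for all large $n$.
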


\begin{proof}
    Let $W_n=X_n^\top X_n$. Then, by the relation between the smallest singular value of $X_n$ and the smallest eigenvalue of $W_n$, we have
\begin{align}\label{lemma min:1}
   \mathbb{E}\!\left[\left(\frac{1}{s_{\min}(X_n)}\right)^{r}\right] & =  \mathbb{E}\!\left[\left(\frac{1}{\lambda_{\min}(W_n)}\right)^{r/2}\right] \notag \\
   & = \frac{r}{2}\int_{0}^{\infty} t^{\frac{r}{2}-1} \mathbb{P}\left(\lambda_{max}(W_n^{-1})>t\right)\;dt.
\end{align}

Next, applying Lemma $5.2$ and Lemma $5.4$ of \cite{vershynin2011introductionnonasymptoticanalysisrandom}, we obtain
\begin{align}\label{lemma min:2}
    \mathbb{P}\left\{ \lambda_{\max}( W_n^{-1}) > t \right\} = \mathbb{P}\left\{ \underset{v: v \in \mathcal{S}^{p-1} }{\sup} ( v^\top W_n^{-1} v ) > t \right\} \leq 
    \mathbb{P}\left\{ \underset{u \in \mathcal{N}(\mathcal{S}^{p-1},\frac{1}{3}) }{\sup} ( u^\top W_n^{-1} u ) > t /3\right\} ,
\end{align}
where $\mathcal{N}(\mathcal{S}^{p-1},\frac{1}{3}) \leq 7^p$. For any $u$ such that $u \in \mathcal{S}^{p-1}$, $U=u^\top W_n^{-1} u \sim \text{Inverse-}\chi^2_{n - p +1}$ since $\|u\| = 1$. Therefore, using the union bound together with \eqref{lemma min:2}, we conclude that
\begin{align}\label{lemma min:4}
    \mathbb{P} \left( \lambda_{\max}( W_n^{-1}) > t \right)  \leq \mathcal{N}(\mathcal{S}^{p-1},\frac{1}{3})\,\mathbb{P} \left( U > t/3 \right)   \leq 7^p \mathbb{P} \left( U > t/3 \right).
\end{align}
\noindent Since $U$ has the same distribution as $U_1/2$, where $U_1 \sim \Ical \Gcal\{(n-p+1) / 2, 1 \}$, it follows that for $r \geq 2$,
\begin{align}\label{lemma min:5}
    \mathbb{P} \left( \lambda_{\max}( W_n^{-1}) > t \right) &\leq 
     7^p \mathbb{P} \left( U_1 > 2t/3 \right)\notag \\ 
     &\leq \frac{7^p}{\Gamma\{ (n-p+1) /2\} }\int_{2t/3}^{\infty} \frac{1}{x^{- \frac{n-p+1}{2} - 2}} e^{1/x} dx \nonumber\\
    &\stackrel{(a)}{\leq}  \frac{7^p}{\left( \frac{n-p-1} {2e}\right)^{\frac{n-p-1}{2}} \frac{\sqrt{\pi}}{3} } \int_{2t/3}^{\infty} \frac{1}{x^{- \frac{n-p+1}{2} - 2}} \frac{1}{x^2} dx \\
    &\leq \frac{3}{\sqrt{\pi}} \frac{7^p}{\left( \frac{(n-p-1)}{2e} \right)^{\frac{n-p-1}{2}}} \frac{3^{\frac{n-p-1}{2}}}{\left(2t \right)^{- \frac{n - p +1} {2}}} \int_{2t/3} ^{\infty} \frac{1}{x^2} dx \\
    &= \frac{3}{\sqrt{\pi}} \frac{7^p}{\left( \frac{(n-p-1)}{2e} \right)^{\frac{n-p-1}{2}}} 
    \left(\frac{3}{2t} \right)^{\frac{n-p+3}{2}} \\
    &= \frac{3}{\sqrt{\pi}} \frac{7^p}{\left( \frac{(n-p-1)}{2e} \right)^{\frac{n-p-1}{2}}} 
    \left(\frac{3}{2t} \right)^{\frac{n-p-r+1}{2}} \left(\frac{3}{2t} \right)^{\frac{r}{2}+1},  
\end{align}
where (a) follows from the lower bound for the Gamma function in \cite{Batir2017GammaBounds}, which is applicable since $n-p\ge 1$ as $r \geq 2$. Consequently, for a fixed constant $K$ such that $t \geq K / (n - p -1)$, we deduce from \eqref{lemma min:5} that
\begin{align}\label{lemma min:6}
    \mathbb{P} \left( \lambda_{\max}( W_n^{-1})> t \right) &\leq \frac{3}{\sqrt{\pi}} \frac{7^p}{\left( \frac{(n-p-1)}{2e} \right)^{\frac{n-p-1}{2}}} 
    \left(\frac{3 (n - p - 1)}{2K} \right)^{\frac{n-p-r+1}{2}} \left(\frac{3}{2t} \right)^{\frac{r}{2}+1} \nonumber\\
    &\leq \frac{3 (2e)^{\frac{r}{2}-1}} {\sqrt{\pi} (n - p - 1)^{\frac{r}{2}-1}}
     \left(  \frac{21 e} {K} \right)^{\frac{n-p-r+1}{2}} 
    \left(\frac{3}{2t} \right)^{\frac{r}{2}+1}.
\end{align}

\noindent
Combining \eqref{lemma min:6} with the integral representation \eqref{lemma min:1}, we obtain
\begin{align}\label{lemma min:7}
    \mathbb{E}\!\left[\left(\frac{1}{s_{\min}(X_n)}\right)^{r}\right]  &\leq 
    \frac{r}{2}\int_{0}^{\frac{K}{n-p-1}}  t^{\frac{r}{2}-1} \mathbb{P}\left(\lambda_{max}(W_n^{-1})>t\right)\;dt. + \nonumber\\
&\qquad \frac{3r (2e)^{\frac{r}{2}-1}} {2\sqrt{\pi} (n - p - 1)^{\frac{r}{2}-1}}
     \left(  \frac{21 e} {K} \right)^{\frac{n-p-r+1}{2}}   \int_{\frac{K}{n-p-1}}^\infty 
     t^{\frac{r}{2}-1} \left(\frac{3}{2t} \right)^{\frac{r}{2}+1} dt \nonumber\\
     &  \leq \frac{r}{2} \int_{0}^{\frac{K}{n-p-1}} t^{\frac{r}{2}-1} dt
     +
     \frac{27r (3e)^{\frac{r}{2}-1}} {8K\sqrt{\pi} (n - p - 1)^{\frac{r}{2}-1}}
     \left(  \frac{21 e} {K} \right)^{\frac{n-p-r+1}{2}}  (n - p -1 ) \nonumber\\
     &= \frac{ c_1 } {(n - p -1)^{\frac{r}{2}}} + \frac{c_2^{\frac{n-p-r+1}{2}}c_3 } {(n- p - 1)^{\frac{r}{2}-2}}, 
\end{align}
where $c_1 (r) =r K^{r/2}/2$, $c_2=(21e/K)$ and $c_3 (r)= \frac{27r (3e)^{\frac{r}{2}-1}} {8K\sqrt{\pi}}$. This establishes the first claim upon choosing $K>21e$.

For the second assertion, observe that \eqref{lemma min:7} implies
\begin{align}\label{lemma min:8}
   \mathbb{E}\!\left[\left(\frac{\sqrt{n}}{s_{\min}(X_n)}\right)^{r}\right]  \leq   \frac{ c_1 (r)} {(1 - \frac{p}{n} -\frac{1}{n})^{\frac{r}{2}}} + \frac{(n- p - 1)^{2}c_2^{\frac{n-p-r+1}{2}}c_3 (r) } {(1 - \frac{p}{n} -\frac{1}{n})^{\frac{r}{2}}}.
\end{align}
The desired uniform bound now follows for every $n$ since $p/n \to \gamma \in [0,1)$ as $n \to \infty$ and $c_2\in (0,1)$. This proves the result for $r\ge 2$; it then automatically follows for all
$0\le r\le 2$ by the standard monotonicity property of moments.
\end{proof}

\begin{remark}
Theorem \ref{min_singular_value} extends directly to the regime $p/n \to \gamma \in (1,\infty)$. 
The proof proceeds similarly as in part (i) of Lemma \ref{lem:double_descent} upon 
interchanging the roles of $n$ and $p$, and replacing $X_n$ by $X_n^\top$. Note that $X_n$ and $X_n^\top$ share the same singular values. Furthermore, invoking part (ii) of Lemma \ref{lem:double_descent}, 
one obtains that when $p/n \to 1$,
\[
\mathbb{E}\!\left[\left(\frac{\sqrt{n}}{s_{\min}(X_n)}\right)^{r}\right] \to \infty
\quad \text{for any } r > 0, \text{ as } n \to \infty.
\]
The proofs are omitted since they follow by arguments identical to those already presented.
\end{remark}

\noindent We next control the upper end of the singular-value spectrum. Lemma~\ref{max_singular_value} bounds positive moments of $s_{\max}(X_n)$ when $X_n$ has i.i.d.\ $\Ncal(0,1)$
entries. It is well known that $s_{\max}(X_n)$ typically grows on the order of
$\sqrt{n}+\sqrt{p}$ (\cite{vershynin2011introductionnonasymptoticanalysisrandom,bai_yin}). Lemma~\ref{max_singular_value} formalizes this in the context of moment convergence by showing
that, for any fixed $r>0$, the $r$-th moment of $s_{\max}(X_n)$ is controlled at
the corresponding order, namely $\EE[s_{\max}(X_n)^r]\lesssim (\sqrt{n}+\sqrt{p})^r$
(up to constants depending only on $r$). As a consequence, under the proportional
growth regime $p/n\to\gamma\in[0,\infty)$, the normalized quantity
$s_{\max}(X_n)/\sqrt{n}$ has uniformly bounded moments.

\begin{lem}\label{max_singular_value}
Let $X_n\in\mathbb{R}^{n\times p}$ be a random matrix with i.i.d.\ entries distributed as
$\mathcal{N}(0,1)$. Then, for any $r>0$, there exist (explicitly known) constants $\tilde{c_1} (r),\;\tilde{c_2} (r) >0$ such that
\[
\mathbb{E}\!\left[\left(s_{\max}(X_n)\right)^{r}\right]
\le
 \tilde{c_1}(r) (\sqrt{n}+\sqrt{p})^r + \tilde{c_2}(r).
\]
Moreover, if $p/n\to\gamma\in[0,\infty)$, then there exists a universal constant $K_3(r)>0$ (depends only on $r$ and $\gamma$)
such that
\[
 \mathbb{E}\!\left[\left(\frac{s_{\max}(X_n)}{\sqrt{n}}\right)^{r}\right] \le K_3(r) .
\]
\end{lem}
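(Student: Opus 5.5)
The plan is to bound the moment through the tail-integration identity, controlling the tail with the Gaussian concentration bound for the largest singular value, exactly as the introduction indicates. By \cite[Corollary 5.35]{vershynin2011introductionnonasymptoticanalysisrandom}, for every $s\ge 0$,
\[
\mathbb{P}\big(s_{\max}(X_n) > \sqrt n+\sqrt p + s\big)\le 2\exp(-s^2/2).
\]
Set $a_n:=\sqrt n+\sqrt p$. We use the elementary inequality $(x+y)^r\le 2^{(r-1)_+}(x^r+y^r)$ for $x,y\ge 0$, together with the bound
\[
s_{\max}(X_n)\le a_n + (s_{\max}(X_n)-a_n)_+ .
\]
These give
\[
\mathbb{E}[s_{\max}(X_n)^r]\le 2^{(r-1)_+}\Big(a_n^r+\mathbb{E}\big[(s_{\max}(X_n)-a_n)_+^r\big]\Big).
\]

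The remainder term is handled by tail integration. We have
\[
\mathbb{E}\big[(s_{\max}(X_n)-a_n)_+^r\big]=\int_0^\infty r s^{r-1}\,\mathbb{P}\big(s_{\max}(X_n)-a_n>s\big)\,ds\le \int_0^\infty 2r s^{r-1}e^{-s^2/2}\,ds .
\]
Substituting $u=s^2/2$ evaluates the last integral to $2^{r/2}r\,\Gamma(r/2)$. This constant is finite for every $r>0$ and does not depend on $n$ or $p$. We therefore obtain the first claim with
\[
\tilde c_1(r)=2^{(r-1)_+},\qquad \tilde c_2(r)=2^{(r-1)_+}2^{r/2}r\Gamma(r/2),
\]
which are explicit.

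For the second claim, divide the first bound by $n^{r/2}$:
\[
\mathbb{E}\big[(s_{\max}(X_n)/\sqrt n)^r\big]\le \tilde c_1(r)\,(1+\sqrt{p/n})^r+\tilde c_2(r)\,n^{-r/2}.
\]
Since $p/n\to\gamma$, the sequence $p/n$ is bounded by some constant $M_\gamma$ depending only on the sequence; this holds for large $n$ by convergence and for the finitely many remaining $n$ trivially. Taking $K_3(r)=\tilde c_1(r)(1+\sqrt{M_\gamma})^r+\tilde c_2(r)$ then gives the uniform bound for all $n$. The case $\gamma=0$ causes no issue, and no invertibility is needed here, which is why the regime $[0,\infty)$ including $\gamma=1$ is allowed.

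The main point to watch is correctly invoking the concentration inequality, which holds for all $s\ge0$ with absolute constants because $s_{\max}$ is a $1$-Lipschitz function of the Gaussian entries. No step is genuinely hard; the only care needed is handling $r<1$ in the power inequality, which the factor $2^{(r-1)_+}$ covers.
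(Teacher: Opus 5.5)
Your proof is correct. It uses the same two ingredients as the paper, namely the tail-integration identity and \cite[Corollary 5.35]{vershynin2011introductionnonasymptoticanalysisrandom}, but it decomposes differently.

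The paper works in the eigenvalue scale. It writes $\mathbb{E}[s_{\max}(X_n)^r]=\frac r2\int_0^\infty t^{r/2-1}\mathbb{P}(\|X_n\|_2^2>t)\,dt$ and splits the integral at $C^2a_n^2$, with $C=\sqrt2$ and $a_n=\sqrt n+\sqrt p$. It then shifts the variable in the upper piece and uses $(a+b)^2\le 2(a^2+b^2)$ to pass from $\{\|X_n\|_2^2>t+2a_n^2\}$ to the larger event $\{\|X_n\|_2>a_n+\sqrt{t/2}\}$. In that shift the weight $(t+2a_n^2)^{r/2-1}$ is replaced by $t^{r/2-1}$, which is justified only for $r\le 2$. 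For $r>2$ an extra term of order $a_n^{r-2}$ has to be carried; this is harmless for the conclusion but is missing from the displayed chain. The lower piece is bounded by $C^r a_n^r$, so the stated factor $\frac{rC^r}{2}$ is too small when $r<2$.

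You instead split the random variable itself, $s_{\max}(X_n)\le a_n+(s_{\max}(X_n)-a_n)_+$, and apply $(x+y)^r\le 2^{(r-1)_+}(x^r+y^r)$. This needs no change of variables and stays in the singular-value scale in which Corollary 5.35 is stated. It also treats every $r>0$ uniformly, with explicit and slightly sharper constants. The paper's formulation mainly buys a parallel with the eigenvalue-scale representation used for $\lambda_{\max}(W_n^{-1})$ in Theorem \ref{min_singular_value}.

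Your remark that the uniform constant depends on $\sup_n p/n$, and not on $\gamma$ alone, is correct and unavoidable. For a fixed small $n$ with very large $p$, the normalized moment is very large. So the phrase ``depends only on $r$ and $\gamma$'' in the statement holds only for all sufficiently large $n$.
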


\begin{proof}
Since $s_{\max}(X_n)=\sqrt{\|X_n^\top X_n\|_2}$, we may write
\[
\mathbb{E}\!\left[\left(s_{\max}(X_n)\right)^{r}\right]=
\mathbb{E}\!\left[\|X_n^\top X_n\|_2^{\,\frac{r}{2}}\right]
=
\mathbb{E}\!\left[\|X_n\|_2^{\,r}\right]
= \frac{r}{2}\int_{0}^{\infty} t^{\frac{r}{2}-1}\, \mathbb{P}\!\left(\|X_n\|_2^{\,2} > t\right)\,dt.
\]
Fix any constant $C = \sqrt{2}$. Splitting the integral at $C^2(\sqrt{n}+\sqrt{p})^2$ yields
\begin{align} \label{lemma max:1}
\mathbb{E}\!\left[\left(s_{\max}(X_n)\right)^{r}\right]
&=\frac{r}{2}
\int_{0}^{C^2(\sqrt{n}+\sqrt{p})^2}
t^{\frac{r}{2}-1}\,\mathbb{P}\!\left(\|X_n\|_2^{\,2} > t\right)\,dt
+\frac{r}{2}
\int_{C^2(\sqrt{n}+\sqrt{p})^2}^{\infty}
t^{\frac{r}{2}-1}\mathbb{P}\!\left(\|X_n\|_2^{\,2} > t\right)\,dt \\
&\le
\frac{rC^r}{2}(\sqrt{n}+\sqrt{p})^r
+
\frac{r}{2}\int_{0}^{\infty}
t^{\frac{r}{2}-1}\,\mathbb{P}\!\left(\|X_n\|_2^{\,2} > t + C^2(\sqrt{n}+\sqrt{p})^2\right)\,dt \\
&\le
\frac{rC^r}{2}(\sqrt{n}+\sqrt{p})^r
+
\frac{r}{2}\int_{0}^{\infty} t^{\frac{r}{2}-1}
\mathbb{P}\!\left(
\|X_n\|_2^{\,2} >
\Bigg(\frac{C(\sqrt{n}+\sqrt{p})}{\sqrt{2}}+\sqrt{\frac{t}{2}}\Bigg)^2
\right)\,dt .
\end{align}
The last step follows from the inequality $(a+b)^2 \le 2(a^2+b^2)$ for $a,b>0$.

Now, by \cite[Corollary 5.35]{vershynin2011introductionnonasymptoticanalysisrandom}, there exists an appropriate choice of $C$ (independent of $n$) such that
\begin{align*}
\mathbb{P}\!\left(
\|X_n\|_2^{\,2} >
\Bigg(\frac{C(\sqrt{n}+\sqrt{p})}{\sqrt{2}}+\sqrt{\frac{t}{2}}\Bigg)^2
\right)
&=
\mathbb{P}\!\left(
\|X_n\|_2 >
\frac{C}{\sqrt{2}}\Big(\sqrt{n}+\sqrt{p}\Big) + \sqrt{\frac{t}{2}}
\right) \\
&\le
2\exp\!\left(-c^2 t\right),
\end{align*}
where $c=0.25$. Substituting this bound into \eqref{lemma max:1} gives
\begin{align*}
\mathbb{E}\!\left[\left(s_{\max}(X_n)\right)^{r}\right]
&\le
\frac{rC^r}{2}(\sqrt{n}+\sqrt{p})^r
+
r \int_{0}^{\infty} t^{\frac{r}{2}-1} \exp\!\left(-c^2 t\right)\,dt \\
&=
\frac{rC^r}{2}(\sqrt{n}+\sqrt{p})^r + r \Gamma\!\left(\frac{r}{2}\right)c^{-r}\\
&=  \tilde{c_1}(r) (\sqrt{n}+\sqrt{p})^r +  \tilde{c_2}(r),
\end{align*}
where $ \tilde{c_1}(r)=\frac{rC^r}{2}$ and $ \tilde{c_2}(r)= r \Gamma\!\left(\frac{r}{2}\right)c^{-r}$, which proves the first claim.

For the second claim, dividing both sides by $n^{r/2}$ yields
\begin{align*}
   \mathbb{E}\!\left[\left(\frac{s_{\max}(X_n)}{\sqrt{n}}\right)^{r}\right] \leq \tilde{c_1}(r) \left(1+\sqrt{\frac{p}{n}}\right)^r + \frac{\tilde{c_2}(r)}{n^{\frac{r}{2}}}.
\end{align*}
The desired uniform bound follows for every $n$ since $p/n \to \gamma \in [0,\infty)$ as $n \to \infty$.
\end{proof}

\begin{remark} \label{remark:subg}
\noindent
Lemma~\ref{max_singular_value} extends beyond the Gaussian setting. In particular, an analogous bound remains valid when the entries of $X_n$ are sub-Gaussian. This follows from standard nonasymptotic results for sub-Gaussian random matrices; see, for example, \cite[Corollary 5.35]{vershynin2011introductionnonasymptoticanalysisrandom} and its general formulation in \cite[Theorem 5.39]{vershynin2011introductionnonasymptoticanalysisrandom}. The primary modification required in the proof concerns the constants ($c$ and $C$), which now depend on the sub-Gaussian norm of the underlying distribution (see Section~\ref{sec_subg}). In contrast, Theorem \ref{theorem_condition_number} and Theorem~\ref{min_singular_value} are essentially Gaussian-specific and does not, in general, extend to arbitrary sub-Gaussian designs. We refer the reader to Section~\ref{sec_subg} for further discussion and an explicit counterexample.
\end{remark}

\begin{remark}
\noindent
Theorem~\ref{min_singular_value} and Lemma~\ref{max_singular_value} can be combined to recover results of the type stated in Theorem~\ref{theorem_condition_number} under an analogous asymptotic regime. Indeed, for any $r > 0$,
\[
\{\kappa(Z_n)\}^r
=
\left(\frac{s_{\max}(Z_n)}{\sqrt{n}}\right)^r
\left(\frac{\sqrt{n}}{s_{\min}(Z_n)}\right)^r.
\]
Consequently, moment bounds for the condition number follow directly from the corresponding bounds on the extreme singular values via an application of the Cauchy--Schwarz inequality. We omit the explicit derivation, as the argument is immediate from this decomposition.
\end{remark}

\section{Applications to High-Dimensional Statistics: Spectral Properties and Moments of Gram Matrices}\label{sec:gram_app}
\noindent In this section, we show how the moment bounds for various spectral quantities established in Section~\ref{sec:main_results} translate into concrete implications for the spectrum of Gram matrices arising from random designs across different statistical settings. In particular, if $X_n$ is an $n \times p$ random \emph{design matrix} whose rows are mutually independent Gaussian vectors, then the eigenvalues and condition number of the associated \emph{Gram matrix} $S_n := \frac{1}{n} X_n^\top X_n$ are closely tied to the singular values and condition number of $X_n$ itself. As we demonstrate below, these spectral quantities play a central role in characterizing the statistical stability of several estimators (Theorems~\ref{thm:ridge_pred_bound_hd_multi} and \ref{thm:inv_cov_error_rate}). They also govern the algorithmic behavior of iterative optimization methods---for example, the convergence rates of first-order procedures and Krylov-subspace algorithms when applied to the corresponding normal equations (Theorem~\ref{thm:gd_gram_condition_number}). The following subsections formalize these consequences. 

\subsection{Prediction Risk for ridge regression with random predictors} \label{sec:ridge}

\noindent
Consider the multi-response linear model with $q$ responses, $p$ predictors, and $n$ samples, given by 
\begin{equation}\label{model}
 Y_n = X_n B + E_n,
\end{equation}
where $Y_n\in\mathbb{R}^{n\times q}$ is the response matrix, $X_n\in \mathbb{R}^{n\times p}$ is the (random) design matrix, $E_n \in \mathbb{R}^{n\times q}$ is the error matrix, and $B\in\mathbb{R}^{p\times q}$ is the coefficient matrix. The matrices $X_n$ and $E_n$ are assumed to be independent, and the rows of $E_n$ are assumed to be independent and identically distributed as $N(0,\Sigma_\varepsilon)$. 

We work in a high-dimensional regime in which the number of predictors $p$ and the number of responses $q$ may both depend on $n$ and increase with $n$. Technically, the parameters $B$ and $\Sigma_\varepsilon$ also depend on $n$ in this regime, but we suppress this dependence for clarity and to distinguish these parameters from data-based quantities. Let
\[
S_n = \frac{1}{n}X_n^\top X_n
\]
denote the sample Gram matrix. For $\lambda>0$, the ridge estimator for $B$ is given by
\[
\hat B_\lambda
= (X_n^\top X_n + \lambda I_p)^{-1} X_n^\top Y_n =  
\left( S_n+ \tilde{\lambda}_n I_p \right)^{-1}\,\frac{1}{n}X_n^\top Y_n,
\]
where $\tilde{\lambda}_n=\frac{\lambda}{n}$. Conditionally on $X_n$, the prediction risk of ridge regression is defined as
\begin{align}\label{pred_risk}
R_{\mathrm{pred}}(\tilde{\lambda}_n \mid X_n)
\;:=\;
\frac{1}{n}\,
\mathbb{E}\big[\|X_n(\hat B_\lambda - B)\|_F^2 \,\big|\, X_n\big]
\;=\;
\mathbb{E}\big[\mathrm{tr}\{(\hat B_\lambda - B)^\top S_n (\hat B_\lambda - B)\}
\,\big|\, X_n\big].    
\end{align}
For the case $q = 1$, corresponding to simple linear regression, the high-dimensional asymptotic behavior of $R_{\mathrm{pred}}(\tilde{\lambda}_n \mid X_n)$ has been extensively studied in the literature; see, for example, \cite{ridge1,ridge2,Hastie}. Under the random design framework, \cite{Hastie} derived the almost sure limit of the quantity $R_{\mathrm{pred}}(\tilde{\lambda}_n \mid X_n)$  for the case $q = 1$ (see \cite[Proposition 2]{Hastie}). However, this almost sure convergence does not provide information about the expected prediction risk, where the expectation is taken over the randomness of $X_n$, which is often of interest in the literature (\cite{ridge1}). See Remark \ref{remark:ridgeless} for details.

That said, we derive non-asymptotic bounds for the expected prediction risk \emph{for general $q$} in a high-dimensional regime. We begin with an intermediate lemma that serves as a stepping stone toward this result. This lemma shows that the prediction risk can be decomposed into a ``bias'' term and a ``variance'' term, and provides upper bounds for both quantities in terms of the largest and smallest eigenvalues of the Gram matrix $S_n$. The proof of this Lemma is deferred to Appendix \ref{apnd_A}.

\begin{lem}[Bias--variance decomposition and bounds for multi-response ridge prediction risk]\label{lem:multi_variance_term}
Consider the multi-response linear model defined in (\ref{model}). Then, for any
fixed design $X_n$ and any $\lambda>0$,
\begin{eqnarray*}
& & R_{\mathrm{pred}}(\tilde{\lambda}_n\mid X_n)\\
&=& \tilde{\lambda}_n^2\,\operatorname{tr}\!\Big(B^\top (S_n+\tilde{\lambda}_n I_p)^{-1}S_n(S_n+\tilde{\lambda}_n I_p)^{-1}B\Big)
+
\frac{\mathrm{tr}(\Sigma_\varepsilon)}{n}\,
\operatorname{tr}\!\Big(S_n(S_n+\tilde{\lambda}_n I_p)^{-1}S_n(S_n+\tilde{\lambda}_n I_p)^{-1}\Big).
\end{eqnarray*}
Moreover,
\[
\tilde{\lambda}_n^2\,\operatorname{tr}\!\Big(B^\top (S_n+\tilde{\lambda}_n I_p)^{-1}S_n(S_n+\tilde{\lambda}_n I_p)^{-1}B\Big)
\le
\frac{\tilde{\lambda}_n^2\,\lambda_{\max}(S_n)}{(\lambda_{\min}(S_n)+\tilde{\lambda}_n)^2}\,\|B\|_F^2,
\]
and
\[
\frac{\mathrm{tr}(\Sigma_\varepsilon)}{n}\,
\operatorname{tr}\!\Big(S_n(S_n+\tilde{\lambda}_n I_p)^{-1}S_n(S_n+\tilde{\lambda}_n I_p)^{-1}\Big)
\le
\frac{p\,\mathrm{tr}(\Sigma_\varepsilon)}{n}\,
\frac{\lambda_{\max}(S_n)^2}{(\lambda_{\min}(S_n)+\tilde{\lambda}_n)^2}.
\]
\end{lem}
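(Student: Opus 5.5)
The plan is to compute the conditional risk exactly by substituting the model into the estimator, and then bound each resulting trace through the spectral decomposition of $S_n$. Write $A:=S_n+\tilde\lambda_n I_p$, so that $\hat B_\lambda = A^{-1}\frac1n X_n^\top(X_nB+E_n) = A^{-1}S_nB + A^{-1}\frac1n X_n^\top E_n$. Since $A^{-1}S_n - I_p = -\tilde\lambda_n A^{-1}$, this gives
\[
\hat B_\lambda - B = -\tilde\lambda_n A^{-1}B + A^{-1}\tfrac1n X_n^\top E_n .
\]
Plugging into \eqref{pred_risk} and expanding the quadratic form produces three pieces: a deterministic bias piece, a variance piece, and a cross term. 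Conditionally on $X_n$, the cross term is linear in $E_n$, and it vanishes because $\mathbb{E}[E_n]=0$ and $E_n$ is independent of $X_n$. The bias piece is exactly $\tilde\lambda_n^2\operatorname{tr}(B^\top A^{-1}S_nA^{-1}B)$.

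For the variance piece, write $M:=A^{-1}\frac1n X_n^\top$. We need $\mathbb{E}[\operatorname{tr}(E_n^\top M^\top S_n M E_n)\mid X_n]$. Since the rows of $E_n$ are i.i.d. $\Ncal(0,\Sigma_\varepsilon)$, we have $\mathbb{E}[E_nE_n^\top]=\operatorname{tr}(\Sigma_\varepsilon) I_n$. Hence, by cyclicity of the trace,
\[
\mathbb{E}[\operatorname{tr}(M^\top S_n M E_nE_n^\top)\mid X_n] = \operatorname{tr}(\Sigma_\varepsilon)\operatorname{tr}(M^\top S_n M).
\]
Using $\frac1n X_nX_n^\top$ inside the trace, $\operatorname{tr}(M^\top S_nM)=\frac1n\operatorname{tr}(S_nA^{-1}S_nA^{-1})$. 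This recovers the stated identity.

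For the two inequalities, diagonalize $S_n=\sum_j \mu_j v_jv_j^\top$ with $\mu_j\ge0$. Because $A$ commutes with $S_n$, the matrix $A^{-1}S_nA^{-1}$ has eigenvalues $\mu_j/(\mu_j+\tilde\lambda_n)^2$. Each of these is at most $\lambda_{\max}(S_n)/(\lambda_{\min}(S_n)+\tilde\lambda_n)^2$, since the numerator is bounded by $\lambda_{\max}$ and the denominator is at least $(\lambda_{\min}+\tilde\lambda_n)^2$. Then $\operatorname{tr}(B^\top PB)\le\lambda_{\max}(P)\|B\|_F^2$ for $P\succeq0$ gives the bias bound. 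Similarly, $S_nA^{-1}S_nA^{-1}$ has eigenvalues $\mu_j^2/(\mu_j+\tilde\lambda_n)^2\le \lambda_{\max}(S_n)^2/(\lambda_{\min}(S_n)+\tilde\lambda_n)^2$, and its trace is at most $p$ times this, which gives the variance bound.

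No step is a real obstacle. The only places needing care are the vanishing of the cross term, which uses independence of $E_n$ and $X_n$, and the correct trace identity $\mathbb{E}[E_nE_n^\top]=\operatorname{tr}(\Sigma_\varepsilon)I_n$, as opposed to $\mathbb{E}[E_n^\top E_n]=n\Sigma_\varepsilon$. It is this choice that makes the variance term scale with $\operatorname{tr}(\Sigma_\varepsilon)$.
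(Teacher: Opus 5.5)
Your proof is correct and is essentially the paper's argument. You substitute the model to get $\hat B_\lambda-B=-\tilde\lambda_n(S_n+\tilde\lambda_n I_p)^{-1}B+(S_n+\tilde\lambda_n I_p)^{-1}\frac1n X_n^\top E_n$, drop the cross term by conditional mean zero, evaluate the noise term through a trace identity, and bound both traces by diagonalizing $S_n$. The only difference is how the noise term is handled: the paper works row by row with the $n\times n$ matrix $\frac1n X_n(S_n+\tilde\lambda_n I_p)^{-1}X_n^\top$, while you use $\mathbb{E}[E_nE_n^\top]=\operatorname{tr}(\Sigma_\varepsilon)I_n$; note that the matrix produced by cycling is $\frac1n X_n^\top X_n=S_n$, not $\frac1n X_nX_n^\top$ as you wrote.

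Your bias step $\operatorname{tr}(B^\top PB)\le\lambda_{\max}(P)\|B\|_F^2$ for $P\succeq 0$ is actually the cleaner justification. Here $P=(S_n+\tilde\lambda_n I_p)^{-1}S_n(S_n+\tilde\lambda_n I_p)^{-1}$, and the paper's intermediate bound $\|BB^\top\|_2\operatorname{tr}(P)$ can exceed $\|B\|_F^2\lambda_{\max}(P)$ when $P$ has several nonzero eigenvalues.
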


\noindent
We now show how the results from Section \ref{sec:main_results} can be applied to bound the moments of $\lambda_{\max}(S_n)$ and $\lambda_{\min}(S_n)$, and thereby obtain non-asymptotic upper bounds for the expected prediction risk. Moreover, these bounds imply that, under mild regularity conditions, the expected prediction risk converges to zero as $n \to \infty$. 
\begin{thm}[Mean prediction risk in high dimensions]\label{thm:ridge_pred_bound_hd_multi}
Consider the random-design multi-response linear model in \eqref{model} and
define $R_{\mathrm{pred}}(\tilde{\lambda}_n\mid X_n)$ as in \eqref{pred_risk}. Assume that
the rows of $X_n\in\mathbb{R}^{n\times p}$ are independent and identically
distributed as $N(0,\tilde\Sigma_n)$, where $\tilde\Sigma_n$ satisfies
\eqref{cond-const}. Assume further that the rows of $E_n$ are independent and
identically distributed as $N(0,\Sigma_\varepsilon)$, where $\Sigma_\varepsilon$
satisfies \eqref{cond-const} with corresponding lower bound $c_{\varepsilon,m}$ and upper bound $c_{\varepsilon,M}$.

Then for a fixed $\lambda>0$ there exists a constant $K>0$ (depending only on $\lambda,c_m,c_M,
c_{\varepsilon,m},c_{\varepsilon,M}$) such that, for all
sufficiently large $n$,
\[
\mathbb{E}\big[ R_{\mathrm{pred}}(\lambda\mid X_n)\big]
\le
K\Big(\tilde{\lambda}_n^2 \|B\|_F^2 + \frac{pq}{n}\Big).
\]
In particular, if $\|B\|_F/n \to 0$ and $pq/n\to0$ as $n\to\infty$, then
$\mathbb{E}\big[ R_{\mathrm{pred}}(\tilde{\lambda}_n\mid X_n)\big]\to 0$ as $n \to \infty$.
\end{thm}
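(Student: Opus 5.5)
We take expectations over $X_n$ in the two bounds of Lemma~\ref{lem:multi_variance_term}, and control the resulting spectral ratios using Lemma~\ref{max_singular_value} and Theorem~\ref{min_singular_value}. The implicit regime is $p/n\to\gamma\in[0,1)$; this is forced if $pq/n\to 0$ with $q\ge1$, in which case $\gamma=0$. The first step is to reduce to the standard Gaussian case. Write $X_n=Z_n\tilde\Sigma_n^{1/2}$, where $Z_n$ has i.i.d.\ $\mathcal{N}(0,1)$ entries. Then
\[
\lambda_{\max}(S_n)\le c_M\, s_{\max}(Z_n)^2/n, \qquad \lambda_{\min}(S_n)\ge c_m\, s_{\min}(Z_n)^2/n .
\]
We also use $\mathrm{tr}(\Sigma_\varepsilon)\le q\,c_{\varepsilon,M}$.

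For the bias term, drop $\tilde\lambda_n$ from the denominator:
\[
\frac{\tilde\lambda_n^2\,\lambda_{\max}(S_n)}{(\lambda_{\min}(S_n)+\tilde\lambda_n)^2}\le \tilde\lambda_n^2\,\frac{\lambda_{\max}(S_n)}{\lambda_{\min}(S_n)^2}\le \tilde\lambda_n^2\,\frac{c_M}{c_m^2}\Big(\frac{s_{\max}(Z_n)}{\sqrt n}\Big)^2\Big(\frac{\sqrt n}{s_{\min}(Z_n)}\Big)^4 .
\]
Apply Cauchy--Schwarz to the expectation of this product. This gives $\mathbb{E}[(s_{\max}/\sqrt n)^4]^{1/2}\,\mathbb{E}[(\sqrt n/s_{\min})^8]^{1/2}\le K_3(4)^{1/2}K_2(8)^{1/2}$ by Lemma~\ref{max_singular_value} and Theorem~\ref{min_singular_value}. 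Theorem~\ref{min_singular_value} requires $n>p+7$, which holds for all sufficiently large $n$. Hence the expected bias term is at most $K\tilde\lambda_n^2\|B\|_F^2$.

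For the variance term, again drop $\tilde\lambda_n$:
\[
\frac{\lambda_{\max}(S_n)^2}{(\lambda_{\min}(S_n)+\tilde\lambda_n)^2}\le \kappa(S_n)^2=\kappa(X_n)^4 .
\]
Its expectation is bounded uniformly by Theorem~\ref{theorem_condition_number} with $r=4$. Multiplying by $p\,\mathrm{tr}(\Sigma_\varepsilon)/n\le c_{\varepsilon,M}\,pq/n$ gives a contribution of at most $K\,pq/n$. Adding the two contributions proves the displayed bound.

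For the limit, $\tilde\lambda_n^2\|B\|_F^2=\lambda^2\|B\|_F^2/n^2\to0$ when $\|B\|_F/n\to0$, and $pq/n\to0$ by assumption, so the expected risk tends to zero. The only genuine obstacle is the regime bookkeeping. The constants $K_1, K_2, K_3$ depend on $\gamma$, so we have to check that $pq/n\to0$ forces $\gamma\in[0,1)$, away from the critical value $1$. We also have to note that the bias bound used $\lambda_{\min}(S_n)^{-2}$ rather than $\tilde\lambda_n^{-2}$. We prefer the former because it yields the stated $\tilde\lambda_n^2$ rate. Everything else is a direct combination of earlier results, with the constant enlarged to cover finitely many small $n$.
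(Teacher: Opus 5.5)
Your proposal is correct and matches the paper's proof essentially step for step. It uses the same application of Lemma~\ref{lem:multi_variance_term} with $\tilde\lambda_n$ dropped from the denominator, the same Cauchy--Schwarz split into the fourth moment of $s_{\max}(Z_n)/\sqrt n$ and the eighth moment of $\sqrt n/s_{\min}(Z_n)$ (via Lemma~\ref{max_singular_value} and Theorem~\ref{min_singular_value}), Theorem~\ref{theorem_condition_number} with $r=4$ for the variance term, and the observation that $pq/n\to0$ forces $p/n\to0$, so $n>p+7$ eventually.

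One small quibble concerns your closing remark about enlarging the constant to cover finitely many small $n$. It is unnecessary, since the claim is only for sufficiently large $n$. It also could not be carried out through the Cauchy--Schwarz route, because the eighth inverse moment of $s_{\min}(Z_n)$ is infinite when $p\le n\le p+7$.
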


\begin{proof}
By Lemma \ref{lem:multi_variance_term}, for any fixed design $X_n$ and any
$\lambda>0$,
\begin{align}\label{eq:ridge_upper_from_lemma_multi}
R_{\mathrm{pred}}(\tilde{\lambda}_n\mid X_n)
\le
\frac{1}{(\lambda_{\min}(S_n)+\tilde{\lambda}_n)^2}
\Big\{
\tilde{\lambda}_n^2 \lambda_{\max}(S_n)\|B\|_F^2
+
\frac{p\,\mathrm{tr}(\Sigma_\varepsilon)}{n}\, \,\lambda_{\max}(S_n)^2
\Big\}.
\end{align}

We next control $(\lambda_{\min}(S_n)+\tilde{\lambda}_n)^{-2}$ and $\lambda_{\max}(S_n)$ in
terms of the singular values of a standard Gaussian matrix. Write
\[
X_n = Z_n \tilde\Sigma_n^{1/2},
\]
where $Z_n$ has i.i.d.\ $\mathcal{N}(0,1)$ entries. Then
\[
S_n=\frac{1}{n}\tilde\Sigma_n^{1/2}Z_n^\top Z_n\tilde\Sigma_n^{1/2}.
\]
Using $\lambda_{\max}(ABA)\le \|A\|_2^2\lambda_{\max}(B)$ and
$\lambda_{\min}(ABA)\ge s_{\min}(A)^2\lambda_{\min}(B)$ for $A\succeq 0$,
together with \eqref{cond-const}, we obtain
\begin{align}\label{eq:eig_sandwich_multi}
\lambda_{\max}(S_n)
\le
\lambda_{\max}(\tilde\Sigma_n)\,\lambda_{\max}\!\left(\frac{1}{n}Z_n^\top Z_n\right)
\le
c_M\left(\frac{s_{\max}(Z_n)}{\sqrt{n}}\right)^2,
\end{align}
and, for $n>p$ (so that $Z_n$ has full column rank almost surely),
\begin{align}\label{eq:eig_sandwich_min_multi}
\lambda_{\min}(S_n)
\ge
\lambda_{\min}(\tilde\Sigma_n)\,\lambda_{\min}\!\left(\frac{1}{n}Z_n^\top Z_n\right)
=
\lambda_{\min}(\tilde\Sigma_n)\left(\frac{s_{\min}(Z_n)}{\sqrt{n}}\right)^2
\ge
c_m\left(\frac{s_{\min}(Z_n)}{\sqrt{n}}\right)^2 .
\end{align}

Hence, using the inequality $(a+\lambda)^{-2}\le a^{-2}$ for $a>0$, we have
\[
\frac{1}{(\lambda_{\min}(S_n)+\tilde{\lambda}_n)^2}\le \frac{1}{(\lambda_{\min}(S_n))^2}.
\]
Using \eqref{eq:ridge_upper_from_lemma_multi} yields
\begin{align}\label{eq:ridge_expect_split_multi}
\mathbb{E}\big[R_{\mathrm{pred}}(\tilde{\lambda}_n\mid X_n)\big]
&\le
\tilde{\lambda}_n^2 \|B\|_F^2 \,\mathbb{E}\!\left[\frac{\lambda_{\max}(S_n)}{(\lambda_{\min}(S_n))^2}\right]
+
\frac{p}{n}\,\mathbb{E}\!\left[\mathrm{tr}(\Sigma_\varepsilon)\,
\frac{(\lambda_{\max}(S_n))^2}{(\lambda_{\min}(S_n))^2}\right].
\end{align}
Since $\Sigma_\varepsilon$ is deterministic and $\mathrm{tr}(\Sigma_\varepsilon)\le
q\,\lambda_{\max}(\Sigma_\varepsilon)$, it follows that
\[
\mathrm{tr}(\Sigma_\varepsilon)\le q\,c_{\varepsilon,M}.
\]
Moreover,
\[
\left(\frac{\lambda_{\max}(S_n)}{\lambda_{\min}(S_n)}\right)^2
=
\left(\frac{s_{\max}(X_n)}{s_{\min}(X_n)}\right)^4
=
(\kappa(X_n))^4.
\]
Therefore,
\begin{align}\label{eq:ridge_expect_split_multi2}
\mathbb{E}\big[R_{\mathrm{pred}}(\tilde{\lambda}_n\mid X_n)\big]
&\le
\tilde{\lambda}_n^2 \|B\|_F^2\,\mathbb{E}\!\left[\frac{\lambda_{\max}(S_n)}{(\lambda_{\min}(S_n))^2}\right]
+
c_{\varepsilon,M}\frac{pq}{n}\,\mathbb{E}\!\left[(\kappa(X_n))^4\right].
\end{align}

Using \eqref{eq:eig_sandwich_multi}--\eqref{eq:eig_sandwich_min_multi}, we obtain
\[
\frac{\lambda_{\max}(S_n)}{(\lambda_{\min}(S_n))^2}
\le
\frac{c_M}{c_m^2}\,
\left(\frac{s_{\max}(Z_n)}{\sqrt{n}}\right)^2
\left(\frac{\sqrt{n}}{s_{\min}(Z_n)}\right)^4.
\]
Applying the Cauchy--Schwarz inequality yields
\begin{align*}
\mathbb{E}\!\left[\frac{\lambda_{\max}(S_n)}{(\lambda_{\min}(S_n))^2}\right]
&\le
\frac{c_M}{c_m^2}\,
\Bigg\{\mathbb{E}\!\left[\left(\frac{s_{\max}(Z_n)}{\sqrt{n}}\right)^4\right]\Bigg\}^{1/2}
\Bigg\{\mathbb{E}\!\left[\left(\frac{\sqrt{n}}{s_{\min}(Z_n)}\right)^8\right]\Bigg\}^{1/2}.
\end{align*}

Since $pq/n\to0$, we have $p/n\to 0$
in particular, and hence $n>p+8-1$ for sufficiently large $n$. Therefore, by Theorem \ref{min_singular_value}
and Lemma \ref{max_singular_value} (with $r=8$ and $r=4$, respectively), there
exist positive constants $K_{\max}$ and $K_{\min}$ such that
\[
\sup_{n}\mathbb{E}\!\left[\left(\frac{s_{\max}(Z_n)}{\sqrt{n}}\right)^4\right]\le K_{\max},
\qquad
\sup_{n}\mathbb{E}\!\left[\left(\frac{\sqrt{n}}{s_{\min}(Z_n)}\right)^8\right]\le K_{\min}.
\]
Hence, using the above bounds together with Theorem \ref{theorem_condition_number},
it follows that there exists a constant $C>0$ (depending only on $c_m,c_M$ and
$\gamma$) such that, for all sufficiently large $n$,
\[
\mathbb{E}\!\left[\frac{\lambda_{\max}(S_n)}{(\lambda_{\min}(S_n))^2}\right]\le C,
\qquad
\mathbb{E}\!\left[(\kappa(X_n))^4\right]\le C.
\]
Substituting these bounds into \eqref{eq:ridge_expect_split_multi2} gives
\[
\mathbb{E}\big[R_{\mathrm{pred}}(\tilde{\lambda}_n\mid X_n)\big]
\le
C\tilde{\lambda}_n^2 \|B\|_F^2 + C c_{\varepsilon,M}\frac{pq}{n}
\]
Absorbing $\lambda^2$ and $c_{\varepsilon,M}$ into the generic constant yields the desired bound, thereby completing the proof of the first claim. The second claim follows upon setting $\tilde{\lambda}_n = \lambda / n$ in the preceding expression and using the assumption that $\|B\|_F/ n \to 0$ as $n \to \infty$.
\end{proof}

\begin{remark}
The condition $pq = o(n)$ (or equivalently $p = o(n)$ when $q$ is fixed) is standard in establishing asymptotic results for least squares type estimators; see, for example, \cite{armagan,bai_ghosh,sarkar}. This requirement is also natural from a dimensional scaling perspective. In the absence of any structural assumptions, such as low-rankness or sparsity, the squared Frobenius norm of the coefficient matrix scales proportionally with $pq$, reflecting the total number of free parameters. Consequently, the condition $pq = o(n)$ is both intuitively justified and consistent with prevailing assumptions in the existing literature.
\end{remark}

\begin{remark}
Following \cite{ridge2}, one may alternatively model the regression coefficient matrix $B$ as random. Suppose that $\mathbb{E}[\|B\|_F^2] = {B^*_n}^2$ and that $B^*_n / n \to 0$ as $n \to \infty$. Then the preceding argument carries over unchanged by conditioning on both $B$ and $X_n$, and consequently the result continues to hold when the coefficient matrix is random.

In general, the condition $\|B\|_F / n \to 0$ is mild and is satisfied under a broad class of scaling regimes; in particular, it holds whenever $\|B\|_F = O(pq)$. For instance, if the entries of $B$ are uniformly bounded (e.g. \cite{bai_ghosh}), then $\|B\|_F^2 \le C pq$ for some constant $C > 0$. Alternatively, consider a random coefficient model with $\mathbb{E}[\mathrm{vec}(B)] = 0$ and $\mathrm{Var}(\mathrm{vec}(B)) = \alpha^2 (pq)^{-1} I_{pq}$, where $\alpha > 0$ is a constant. In this case, $\mathbb{E}[\|B\|_F^2] = O(1)$. A similar assumption appears in \cite{ridge2} for the special case $q = 1$.
\end{remark}

\begin{remark}\label{remark:ridgeless}
In \cite[Proposition 2]{Hastie}, the authors establish an almost sure limit for $R_{\mathrm{pred}}(\tilde{\lambda}_n \mid X_n)$ in the case $q = 1$ under the regime $p/n \to \gamma \in [0,1)$, while imposing substantially weaker assumptions than Gaussianity. Specifically, in our notation they consider the design
$
X_n = Z_n \tilde{\Sigma}_n^{1/2},
$
where $Z_n$ is an $n \times p$ random matrix with i.i.d.\ entries having zero mean, unit variance, and a finite fourth moment, and where $\tilde{\Sigma}_n$ satisfies \eqref{cond-const}. Although this assumption is weaker than the Gaussian design we adopt to prove Theorem~\ref{thm:ridge_pred_bound_hd_multi}, it is important to emphasize that our objective is different. Rather than almost sure convergence, we require convergence of the \emph{mean} prediction risk, i.e., after taking expectation over the randomness of $X_n$. This constitutes a stronger requirement than the almost sure statement in \cite[Proposition 2]{Hastie}.

Indeed, removing the Gaussianity assumption from Theorem~\ref{thm:ridge_pred_bound_hd_multi} is not always possible, as we demonstrate in Section~\ref{sec_subg}. In particular, we construct a sub-Gaussian counterexample (see Proposition \ref{prop:subg_counterexample_r1}) satisfying all the assumptions of \cite[Proposition 2]{Hastie}, yet for which the expectation of the inverse of the minimum singular value diverges to infinity, a quantity that plays a key role in bounding the expected prediction risk. At the same time, the corresponding almost sure convergence may still hold by the Bai--Yin law \cite{bai_yin}, yielding a result consistent with \cite[Proposition 2]{Hastie}.
\end{remark}

\subsection{Expected estimation risk for inverse covariance matrices} \label{sec:cov}

\noindent
Another widely used Gram matrix in statistical analysis is the sample variance--covariance matrix. Let
$Y_1,\ldots,Y_n\in\mathbb{R}^p$ be i.i.d.\ centered Gaussian random vectors with
covariance matrix $\Sigma$, and define the sample covariance matrix
\[
S_n=\frac{1}{n}\sum_{i=1}^n Y_iY_i^\top .
\]
It is well known that $S_n$ is an unbiased estimator of $\Sigma$. In the
high-dimensional regime where $p$ grows with $n$ (typically with
$p/n\to\gamma\in[0,1)$), a large literature has developed sharp concentration
inequalities for $\|S_n-\Sigma\|_2$ under sub-Gaussian assumptions; see, for
example, \cite{vershynin2018high,wainwright2019high}. These results yield
high-probability rates of convergence and form the basis of many modern
nonasymptotic analyses. Under additional regularity conditions (for instance,
when the eigenvalues of $\Sigma$ are uniformly bounded away from $0$ and
infinity), the same concentration machinery can also be used to obtain
high-probability bounds for the inverse error $\|S_n^{-1}-\Sigma^{-1}\|_2$.

While such tail bounds are highly informative, they do not directly quantify
the \emph{average} performance of covariance estimation. In many applications,
one is instead interested in expectation- or moment-type quantities, such as
$\mathbb{E}[\|S_n-\Sigma\|_2^2]$, which provide a stronger notion of stability by
capturing both typical behavior and the contribution of rare extreme events.
In this direction, \cite{gram} established nonasymptotic bounds for moments of
$\|S_n-\Sigma\|_2$ for general \emph{sub-Gaussian} random vectors (with the
Gaussian case as a special instance), implying that for any fixed $r\ge 1$,

\begin{equation}\label{moment_bound_S}
 \big(\mathbb{E}\|S_n-\Sigma\|_2^r\big)^{1/r}
=
O\Big(\sqrt{\frac{p}{n}}\vee\frac{p}{n}\Big),   
\end{equation}
under uniformly bounded eigenvalues of $\Sigma$.

A related and practically important problem is estimation of the precision
matrix $\Sigma^{-1}$ by the empirical inverse $S_n^{-1}$ (\cite{Muirhead1982Aspects,Anderson2003Multivariate}), which appears frequently in multivariate analysis and high-dimensional inference. For
instance, in sparse precision matrix estimation,
\cite{Peng2009PartialCorrJointSparse} considered the regime $p<n$ and proposed
estimating diagonal elements of $\Sigma^{-1}$ using the corresponding diagonal
entries of $S_n^{-1}$. However, inverse covariance estimation is substantially
more delicate: it is inherently unstable (\cite{LedoitWolf2004WellConditioned}) whenever $\lambda_{\min}(S)$ is small,
since this may lead to large fluctuations in $S_n^{-1}$. This motivates the study
of moment bounds for inverse estimation errors, such as
$\mathbb{E}[\|S_n^{-1}-\Sigma^{-1}\|_2^2]$. Since $p$ is allowed to increase with $n$, the dimension of the parameter $\Sigma$ also changes with the sample size. As noted earlier in Section~\ref{sec:ridge}, we suppress this dependence on $n$ in our notation for clarity and simplicity of exposition. 

In the next theorem, we show that under Gaussianity and the same bounded
eigenvalue assumptions as in \eqref{cond-const} on $\Sigma$, for any fixed $r\ge 1$ the inverse covariance estimation error i.e. $\mathbb{E}[\|S_n^{-1}-\Sigma^{-1}\|_2^r]$ achieves the same rate as the corresponding moment bound for $\mathbb{E}[\|S_n-\Sigma\|_2^r]$. This yields a strong stability for inverse covariance estimation in the Gaussian case. In the subsequent section (see Proposition~\ref{prop:inv_cov_infinite_mean}), we show that this behavior is specific to Gaussian designs(\cite{gram}) and, unlike the moment bounds for
$\|S_n-\Sigma\|_2$, need not hold for general sub-Gaussian distributions.

\begin{thm}[Moment bound for inverse sample covariance error]\label{thm:inv_cov_error_rate}
Let $Y_1,\ldots,Y_n\in\mathbb{R}^p$ be i.i.d.\ centered Gaussian random vectors
with covariance matrix $\Sigma$, and define
\[
S_n:=\frac{1}{n}\sum_{i=1}^n Y_iY_i^\top .
\]
Assume that $\Sigma$ satisfies \eqref{cond-const}.
Fix $r\ge 0$ and assume $n>p+2r-1$. Then there exists a constant $C_r>0$
(depending only on $r,c_m,c_M$) such that 
\[
\Big(\mathbb{E}\big[\|S_n^{-1}-\Sigma^{-1}\|_2^r\big]\Big)^{1/r}
\le
C_r\left(\sqrt{\frac{p}{n}}\vee\frac{p}{n}\right).  
\]
\end{thm}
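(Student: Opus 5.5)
The plan is to reduce the inverse error to the forward error through a resolvent-type identity, and then control the extra random factor with the negative-moment bound of Theorem~\ref{min_singular_value}. Since $S_n$ is invertible almost surely when $n>p$, we have
\[
S_n^{-1}-\Sigma^{-1}=S_n^{-1}(\Sigma-S_n)\Sigma^{-1},
\qquad\text{so}\qquad
\|S_n^{-1}-\Sigma^{-1}\|_2\le c_m^{-1}\,\|S_n^{-1}\|_2\,\|S_n-\Sigma\|_2 .
\]
Raising to the power $r$ and applying Cauchy--Schwarz gives
\[
\mathbb{E}\|S_n^{-1}-\Sigma^{-1}\|_2^r\le c_m^{-r}\big(\mathbb{E}\|S_n^{-1}\|_2^{2r}\big)^{1/2}\big(\mathbb{E}\|S_n-\Sigma\|_2^{2r}\big)^{1/2}.
\]
The second factor is handled by \eqref{moment_bound_S}, applied with exponent $2r$. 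It gives $\big(\mathbb{E}\|S_n-\Sigma\|_2^{2r}\big)^{1/(2r)}\le C'_r\big(\sqrt{p/n}\vee p/n\big)$, with $C'_r$ depending only on $r$ and $c_M$. After taking the $1/r$-th root, this supplies exactly the target rate, provided the first factor satisfies $\big(\mathbb{E}\|S_n^{-1}\|_2^{2r}\big)^{1/(2r)}\le C''_r$.

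For the first factor, write $Y_i=\Sigma^{1/2}z_i$ and let $Z_n$ be the $n\times p$ standard Gaussian matrix with rows $z_i$. As in \eqref{eq:eig_sandwich_min_multi},
\[
\|S_n^{-1}\|_2=\lambda_{\min}(S_n)^{-1}\le c_m^{-1}\big(\sqrt n/s_{\min}(Z_n)\big)^2,
\]
so it suffices to bound $\mathbb{E}\big[(\sqrt n/s_{\min}(Z_n))^{4r}\big]$. I would invoke the explicit non-asymptotic first display of Theorem~\ref{min_singular_value} with exponent $4r$, multiplied by $n^{2r}$. This yields
\[
\mathbb{E}\big[(\sqrt n/s_{\min}(Z_n))^{4r}\big]\le c_1(4r)\Big(\frac{n}{n-p-1}\Big)^{2r}+c_3(4r)\,n^{2r}(n-p-1)^{2-2r}c_2^{(n-p-4r+1)/2}.
\]
The exponential factor $c_2^{(n-p)/2}$ kills any polynomial in $n-p$. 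The remaining ratio $n/(n-p-1)$ is what must be controlled.

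I would then split into two cases.

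\emph{Case $p\le n/2$.} Here $n-p-1\ge n/2-1$, so the ratio is at most about $2$. The second term is bounded because $n^{2r}\lesssim (n-p)^{2r}$ and $c_2^{(n-p)/2}$ decays geometrically in $n-p$. Hence $\mathbb{E}\|S_n^{-1}\|_2^{2r}\le C''_r$ with $C''_r$ depending only on $r$ and $c_m$, and the claim follows.

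\emph{Case $p>n/2$.} Here $\sqrt{p/n}\vee p/n\ge 2^{-1/2}$, so it is enough to show $\mathbb{E}\|S_n^{-1}-\Sigma^{-1}\|_2^r\le C_r^r/2^{r/2}$. I would bound this directly by $2^{r-1}\big(\mathbb{E}\|S_n^{-1}\|_2^r+c_m^{-r}\big)$, using only the $r$-th inverse moment, which is where the hypothesis $n>p+2r-1$ enters. It guarantees the inverse-Wishart moment of order $r$ (equivalently $\mathbb{E}[s_{\min}(Z_n)^{-2r}]$) is finite. This step also avoids the Cauchy--Schwarz doubling of the exponent, which would require $4r$ moments.

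\textbf{Main obstacle.} The main obstacle is this second regime: making $\mathbb{E}\|S_n^{-1}\|_2^r$ bounded by a constant depending only on $r,c_m,c_M$ when $n-p$ is merely larger than $2r-1$ rather than proportional to $n$. The Theorem~\ref{min_singular_value} bound then behaves like $(n/(n-p-1))^r$, which is not uniformly bounded. My first attempt would use the exact law: $\mathbb{E}\,\mathrm{tr}(W^{-1})^{r}$ for $W=Z_n^\top Z_n\sim$ Wishart, together with the standard inverse-Wishart moment formulas. This gives $\mathbb{E}\|(Z_n^\top Z_n/n)^{-1}\|^r\lesssim_r (n/(n-p-2r+1))^r$. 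I expect the honest conclusion to be that the constant must be allowed to depend on a lower bound for $(n-p)/n$, equivalently on $\gamma<1$ in the proportional regime. If that extra dependence cannot be removed, I would state this explicitly as the precise form in which the argument closes.
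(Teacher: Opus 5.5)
Your main line is exactly the paper's proof: the identity $S_n^{-1}-\Sigma^{-1}=S_n^{-1}(\Sigma-S_n)\Sigma^{-1}$, Cauchy--Schwarz, Theorem~\ref{min_singular_value} at exponent $4r$, and \eqref{moment_bound_S} at exponent $2r$. The only difference is at the step you flag. The paper asserts $\{\mathbb{E}\|S_n^{-1}\|_2^{2r}\}^{1/2}\le C_{r,1}$ with $C_{r,1}$ depending only on $r$ and $c_m$. You observe that nothing controls $n/(n-p-1)$ uniformly.

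You are right, and the obstruction defeats the statement as written, not just this method. Take $\Sigma=\sigma I_p$ with $\sigma\in[c_m,c_M]$. Fix $r\ge 1$ and an integer $k\ge 2$ with $k>2r-1$, and set $p=n-k$, so that $n>p+2r-1$ for every $n$. With $W=Z_n^\top Z_n$ Wishart we have $S_n^{-1}=(n/\sigma)W^{-1}$ and $\mathbb{E}W^{-1}=I_p/(n-p-1)$. Also $\|A\|_2\ge \mathrm{tr}(A)/p$ for $A\succeq 0$. Lyapunov's and the triangle inequality then give
\[
\Big(\mathbb{E}\|S_n^{-1}-\Sigma^{-1}\|_2^r\Big)^{1/r}
\ \ge\ \mathbb{E}\|S_n^{-1}\|_2-\frac{1}{\sigma}
\ \ge\ \frac{n}{\sigma p}\,\mathbb{E}\,\mathrm{tr}(W^{-1})-\frac{1}{\sigma}
\ =\ \frac{1}{\sigma}\Big(\frac{n}{k-1}-1\Big)\ \longrightarrow\ \infty .
\]
The claimed right-hand side, however, is at most $C_r$ because $p<n$. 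Choosing $k>4r-1$ shows that switching to the hypothesis $n>p+4r-1$ does not help. That is the condition the paper's Cauchy--Schwarz step actually needs, and its proof says so, contradicting the stated $n>p+2r-1$.

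The paper crosses this step by quoting Theorem~\ref{min_singular_value} for more than it gives. At exponent $4r$ that theorem yields a finite bound for each fixed $(n,p)$ with $n>p+4r-1$. It gives a uniform bound $K_2$ only when $p/n\to\gamma\in[0,1)$, and then $K_2$ depends on $\gamma$. Neither yields a constant depending only on $r$ and $c_m$.

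So your ``honest conclusion'' is the correct form of the result. Assume $p\le\delta n$ for a fixed $\delta<1$, which the regime $p/n\to\gamma<1$ provides for large $n$, and let $C_r$ depend on $\delta$ too. Your case~1, with $1/2$ replaced by $\delta$, then proves it once $n>p+4r-1$. Your case-2 device remains useful for the finitely many pairs with $p\le\delta n$ and $p+2r-1<n\le p+4r-1$, where the $4r$-th moment is infinite. There it needs only $\mathbb{E}\|S_n^{-1}\|_2^r<\infty$ and $\sqrt{p/n}\ge n^{-1/2}$.

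Two scales you quote are too optimistic; both corrections only strengthen your conclusion.
\begin{itemize}
\item $\|(Z_n^\top Z_n/n)^{-1}\|_2$ is of order $(1-\sqrt{p/n})^{-2}$, about $4n^2/(n-p)^2$ when $n-p\ll n$. So the inverse-Wishart estimate $(n/(n-p-2r+1))^r$ cannot hold with constants depending only on $r$.
\item For the same reason the first display of Theorem~\ref{min_singular_value} cannot hold uniformly in $p$. With $n-p$ fixed its right-hand side stays bounded, while $s_{\min}(X_n)$ is of order $n^{-1/2}$. In its proof the net cardinality $7^p$ is replaced by $7^{(n-p-r+1)/2}$, which requires $p\lesssim n-p$.
\end{itemize}
In your case~1 this is harmless: $p\le \frac{\delta}{1-\delta}(n-p)$, so the display survives with $K$ enlarged to a constant depending on $\delta$.
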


\begin{proof}
Using the identity $A^{-1}-B^{-1}=A^{-1}(B-A)B^{-1}$ with $A=S_n$ and $B=\Sigma$,
\[
S_n^{-1}-\Sigma^{-1}=S_n^{-1}(\Sigma-S_n)\Sigma^{-1}.
\]
Therefore,
\[
\|S_n^{-1}-\Sigma^{-1}\|_2
\le
\|S_n^{-1}\|_2\,\|\Sigma-S_n\|_2\,\|\Sigma^{-1}\|_2
\le
c_m^{-1}\,\|S_n^{-1}\|_2\,\|\Sigma-S_n\|_2.
\]
Raising both sides to the power $r$ and applying Cauchy--Schwarz gives
\begin{align}\label{eq:inv_cov_holder}
\mathbb{E}\big[\|S_n^{-1}-\Sigma^{-1}\|_2^r\big]
&\le
c_m^{-r}
\Big\{\mathbb{E}\|S_n^{-1}\|_2^{2r}\Big\}^{1/2}
\Big\{\mathbb{E}\|\Sigma-S_n\|_2^{2r}\Big\}^{1/2}.
\end{align}

Write $Y_i=\Sigma^{1/2}Z_i$ with $Z_i\sim N(0,I_p)$ i.i.d., and let
$Z\in\mathbb{R}^{n\times p}$ be the matrix with rows $Z_i^\top$. Then
\[
S_n=\Sigma^{1/2}\Big(\frac{1}{n}Z^\top Z\Big)\Sigma^{1/2}.
\]
Since $\lambda_{\min}(S_n)\ge \lambda_{\min}(\Sigma)\lambda_{\min}((1/n)Z^\top Z)$,
it follows that
\[
\|S_n^{-1}\|_2=\frac{1}{\lambda_{\min}(S_n)}
\le
c_m^{-1}\left(\frac{\sqrt{n}}{s_{\min}(Z)}\right)^2,
\]
and hence
\[
\mathbb{E}\|S_n^{-1}\|_2^{2r}
\le
c_m^{-2r}\,
\mathbb{E}\!\left[\left(\frac{\sqrt{n}}{s_{\min}(Z)}\right)^{4r}\right].
\]
By Theorem \ref{min_singular_value} (applied with exponent $4r$), the right-hand
side is finite under $n>p+4r-1$, and therefore
\[
\Big\{\mathbb{E}\|S_n^{-1}\|_2^{2r}\Big\}^{1/2}\le C_{r,1}
\]
for a constant $C_{r,1}$ depending only on $r$ and $c_m$.

To bound $\mathbb{E}\|\Sigma-S_n\|_2^{2r}$, we use the nonasymptotic operator norm
bound for the sample covariance matrix from \cite{gram}. In particular, it
follows from \cite[Corollary 2]{gram} that there exists an
absolute constant $C_{r,2}>0$ depending only on $r$ and $c_M$  such that
\[
\Big(\mathbb{E}\|\Sigma-S_n\|_2^{2r}\Big)^{1/(2r)}
\le
C_{r,2}\|\Sigma\|_2\left(\sqrt{\frac{p}{n}}\vee\frac{p}{n}\right).
\]
Substituting these bounds into \eqref{eq:inv_cov_holder} gives
\[
\Big(\mathbb{E}\big[\|S_n^{-1}-\Sigma^{-1}\|_2^r\big]\Big)^{1/r}
\le
C_r\left(\sqrt{\frac{p}{n}}\vee\frac{p}{n}\right),
\]
where $C_r$ depends only on $r,c_m$ and $c_M$. This completes the proof.

\end{proof}

\subsection{Solving Gram linear systems: Expected iteration complexity and the role of condition number moments}
\label{sec:optim}

\noindent Consider the linear system in the unknown parameter $\theta\in\mathbb{R}^p$,
\begin{equation}\label{eq:gram_system}
S_n\theta=b,
\end{equation}
where $b\in\mathbb{R}^p$ is a known vector. Here $S_n$ denotes the sample Gram
matrix associated with a (random) design matrix $X_n\in\mathbb{R}^{n\times p}$,
defined by
\[
S_n=\frac{1}{n}X_n^\top X_n.
\]
Systems of the form \eqref{eq:gram_system} arise frequently in statistics and
machine learning, for example through normal equations, iterative least-squares
subproblems, and kernel methods. In such settings, the condition number plays a
fundamental role because the righthand side vector $b$ typically depends on the
observed data. Consequently, it is important to understand how perturbations or
noise in the data, which directly induce perturbations in $b$, affect the
optimal solution. In particular, as discussed by
\cite{poggio2020doubledescentconditionnumber}, the condition number
$\kappa(X_n)$ governs the sensitivity of the solution to changes in $b$, and
hence directly influences stability and generalization. A representative example
comes from supervised learning, where the goal is to estimate an unknown
function from measurements at random points in a high-dimensional space, and
where low sensitivity to errors in the data is essential for good predictive
performance. In this section, we show that studying condition numbers is
therefore unavoidable for understanding the computational behavior and
convergence properties of algorithms used to solve such Gram-type linear
systems.
In the growing-dimensional regime where $p/n \to \gamma \in [0,1]$ and $p \le n$, the Gram matrix satisfies $S_n \succ 0$. Consequently, the linear system in \eqref{eq:gram_system} admits a unique solution given by $\theta_n^\star = S_n^{-1}b.$

In contrast, when $p/n \to \gamma \in [1,\infty)$ and $p>n$, the system may fail to admit a solution unless
$b \in \operatorname{range}(X_n^\top)$ (or equivalently $\operatorname{range}(S_n)$). This condition holds in many statistical settings---for instance, in least-squares normal equations arising from linear regression. Even when a solution exists, it is generally not unique. Among all solutions, to obtain a unique minimum-$\ell_2$-norm solution, we define $\theta_n^\star = S_n^{+}b$, where $S_n^{+}$ denotes the (unique) Moore--Penrose pseudo-inverse of $S_n$ \cite{Penrose_1955}. To summarize, the definition of $\theta_n^\star$ across dimensional regimes can be written as
\[
\theta_n^\star =
\begin{cases}
S_n^{-1} b, 
& \text{if } p \le n, \\[8pt]
S_n^{+} b,
& \text{if } p > n.
\end{cases}
\]

A convenient construction of $S_n^{+}$ follows from the singular value decomposition. Suppose $\operatorname{rank}(S_n)=r$, and let
$S_n = UDV^\top$ denote the singular value decomposition (SVD) of $S_n$, where $U$ and $V$ are semi-orthogonal matrices of appropriate dimensions and
$D\in\mathbb{R}^{r\times r}$ is diagonal containing the nonzero singular values of $S_n$. Then the Moore--Penrose pseudo-inverse is uniquely given by $S_n^{+} = UD^{-1}V^\top$.

\medskip

\noindent
{\bf An iterative gradient descent approach} Although $\theta_n^\star$ admits a closed-form representation, computing it directly becomes impractical when $p$ is large, since it requires forming and factorizing the $p\times p$ matrix $S_n$. This motivates the use of iterative methods--under the assumption $b\in \operatorname{range}(X_n^\top)$--which avoid explicit inversion and instead exploit matrix-vector products; see, for example, \cite{saad2003iterative,trefethen1997numerical}.

A standard viewpoint is to interpret \eqref{eq:gram_system} as the first-order
optimality condition of a convex quadratic optimization problem. Define
\begin{equation}\label{eq:quad_obj}
g_n(\theta):=\frac{1}{2}\theta^\top S_n\theta-b^\top\theta,
\qquad \theta\in\mathbb{R}^p,
\end{equation}
so that
\[
\nabla g_n(\theta)=S_n\theta-b,
\qquad
\nabla^2 g_n(\theta)=S_n.
\]
Then $\theta_n^\star$ is equivalently the unique minimizer of $g_n$. In
particular, we consider gradient descent iterates $\{\theta^{(t)}\}_{t\ge 0}$
initialized at $\theta^{(0)}$ and updated according to
\[
\theta^{(t+1)}=\theta^{(t)}-\eta\,\nabla g_n(\theta^{(t)}),
\]
for a step size $\eta>0$. The following lemma establishes linear convergence of gradient descent, with a rate depending only on the condition number of $X_n$, provided that $\theta^{(0)} \in \operatorname{range}(S_n)$ and the step size $\eta$ is chosen appropriately. While related results appear in \cite{saad2003iterative,trefethen1997numerical,Hastie}, the corresponding bounds are not directly comparable to ours, and a nontrivial modification would be required to translate their arguments to the present setting. To avoid these additional complications, we provide an independent proof tailored to our framework, which will be instrumental for the subsequent analysis. The proof is deferred to Appendix \ref{apnd_A}.

\begin{lem}[Linear convergence of gradient descent on $\operatorname{range}(S_n)$]
\label{lem:gd_psd_general}
Let $\operatorname{rank}(S_n)=r\ge 1$, and define $L_n := \lambda_{\max}(S_n)$ and
$\mu_n = \lambda_{\min}\!\big(S\big|_{\operatorname{range}(S_n)}\big)$ (i.e., the minimum non-zero eigenvalue of $S_n$). Consider gradient descent with step size
$\eta=1/L_n$, initialized at $\theta^{(0)}\in\operatorname{range}(S_n)$.
Then, for all $t\ge 0$,
\[
g_n(\theta^{(t)})-g_n(\theta^\star)
\le
q^{\,2t}\,\big\{g_n(\theta^{(0)})-g_n(\theta^\star)\big\}
\le
q^{\,t}\,\big\{g_n(\theta^{(0)})-g_n(\theta^\star)\big\},
\]
where $q_n=\mu_n/L_n$.
\end{lem}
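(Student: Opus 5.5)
The plan is a direct spectral argument. Throughout I read the contraction factor as $q_n = 1-\mu_n/L_n$, the standard gradient-descent rate with step $1/L_n$. The literal ratio $\mu_n/L_n$ cannot be right: for badly conditioned $S_n$ it would predict faster convergence, which is false already for a diagonal $S_n$. With this reading $q_n\in[0,1)$, so $q_n^{2t}\le q_n^{t}$ and the second inequality in the lemma is immediate. All the work is in the first inequality.

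\emph{Step 1 (setup).} Write the spectral decomposition $S_n=\sum_{i=1}^{r}\lambda_i v_iv_i^\top$ with $\lambda_1=L_n\ge\cdots\ge\lambda_r=\mu_n>0$ and orthonormal $v_i$ spanning $\operatorname{range}(S_n)$. By the standing assumption $b\in\operatorname{range}(X_n^\top)=\operatorname{range}(S_n)$, so $\theta^\star=S_n^{+}b$ lies in $\operatorname{range}(S_n)$ and satisfies $S_n\theta^\star=b$. Set $e_t:=\theta^{(t)}-\theta^\star$.

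\emph{Step 2 (excess objective as a quadratic form).} Using $S_n\theta^\star=b$ and expanding $g_n$, I will check the identity
\[
g_n(\theta)-g_n(\theta^\star)=\tfrac12(\theta-\theta^\star)^\top S_n(\theta-\theta^\star)
\]
for every $\theta$. This is the step I expect to need the most care, since it is exactly where $b\in\operatorname{range}(S_n)$ is used. Without that assumption $g_n$ is unbounded below and $\theta^\star$ is not a minimizer.

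\emph{Step 3 (error recursion).} Since $\nabla g_n(\theta^{(t)})=S_n\theta^{(t)}-b=S_ne_t$, the update gives $e_{t+1}=(I-S_n/L_n)e_t$. By induction $e_t\in\operatorname{range}(S_n)$ for all $t$: this holds for $e_0$ because $\theta^{(0)}$ and $\theta^\star$ are both in the range, and the map $I-S_n/L_n$ preserves the range. Expanding $e_t=\sum_{i\le r}c_i^{(t)}v_i$ gives $c_i^{(t)}=(1-\lambda_i/L_n)^t c_i^{(0)}$, where $0\le 1-\lambda_i/L_n\le 1-\mu_n/L_n=q_n$. Restricting to the range is what excludes the null directions, on which the factor would be $1$.

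\emph{Step 4 (conclusion).} Combining with Step 2,
\[
g_n(\theta^{(t)})-g_n(\theta^\star)=\tfrac12\sum_{i\le r}\lambda_i(1-\lambda_i/L_n)^{2t}(c_i^{(0)})^2\le q_n^{2t}\,\tfrac12\sum_{i\le r}\lambda_i(c_i^{(0)})^2=q_n^{2t}\{g_n(\theta^{(0)})-g_n(\theta^\star)\}.
\]
This is the first inequality of the lemma, and the second follows from $q_n\le 1$ as noted above.
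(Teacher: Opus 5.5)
Your proof is correct and follows essentially the same route as the paper's. The paper also shows that $e^{(t)}$ stays in $\operatorname{range}(S_n)$, uses the identity $g_n(\theta^\star+e)-g_n(\theta^\star)=\tfrac12 e^\top S_n e$, and expands $e^{(t)}$ in an eigenbasis of $\operatorname{range}(S_n)$. It differs only in proving a one-step contraction by $(1-\mu_n/L_n)^2$ and iterating, rather than writing the closed form in $t$ as you do.

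Your reading $q_n=1-\mu_n/L_n$ is exactly what the paper's own proof sets at the end, and what its later iteration-complexity theorem uses, so the $\mu_n/L_n$ in the statement is a typo. Your $q_n\in[0,1)$ also correctly covers the case $\mu_n=L_n$, which the paper's strict inequality $0<1-\mu_n/L_n$ glosses over.
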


\noindent We note that the assumption $\theta^{(0)} \in \operatorname{range}(S_n)$ is not restrictive.
Indeed, the zero initialization $\theta^{(0)}=0$ always satisfies this condition.
Moreover, when $\operatorname{rank}(S_n)=p$ (in particular, in the classical case
$p\le n$ where $S_n$ is nonsingular), we have
$\operatorname{range}(S_n)=\mathbb{R}^p$, and no restriction on the initialization
is required. In the rank-deficient case, one may either initialize at
$\theta^{(0)}=0$ or project an arbitrary initial point onto
$\operatorname{range}(S_n)$, both of which ensure convergence of gradient descent
to the Moore--Penrose solution $\theta_n^\star=S_n^{+}b_n$.

\medskip

\noindent
{\bf Analyzing iteration complexity of gradient descent} In practical applications, it is important to
understand how many iterations are required to reach a prescribed accuracy. To
quantify this, the iteration complexity $T_{\varepsilon,n}$ (see, e.g.,
\cite{bubeck2015convex}) is defined by
\[
T_{\varepsilon,n}
:=
\inf\Big\{t\ge 0:\ g_n(\theta^{(t)})-g_n(\theta_n^\star)\le
\varepsilon\big(g_n(\theta^{(0)})-g_n(\theta_n^\star)\big)\Big\}.
\]

\noindent We will now show that, under the Gaussianity assumption and for sufficiently large sample size $n$, and under an
appropriate choice of step size $\eta$, the expected iteration complexity of
gradient descent for solving the Gram system \eqref{eq:gram_system} remains
bounded by a constant independent of $n$ whenever $p/n\to\gamma\in[0,\infty)\setminus\{1\}$. The key observation is that,
even though $g_n$ may fail to be strongly convex on $\mathbb{R}^p$ when $S_n$ is
rank-deficient, Lemma~\ref{lem:gd_psd_general} guarantees linear convergence of
the iterates on $\operatorname{range}(S_n)$, provided that
$\theta^{(0)}\in\operatorname{range}(S_n)$ and $\eta$ is chosen appropriately.
As a result, the iteration complexity is governed by the square of the condition
number of $X_n$. In particular, controlling the second moment of $\kappa(X_n)$
is central to obtaining uniform bounds on the expected runtime.
\begin{thm}[Mean iteration bound for gradient descent on random Gram systems]
\label{thm:gd_gram_condition_number}
Assume that the rows of $X_n\in\mathbb{R}^{n\times p}$ are independent and
identically distributed as $N(0,\tilde\Sigma_n)$, where $\tilde\Sigma_n$
satisfies \eqref{cond-const}. Fix $\varepsilon\in(0,1)$ and consider the gradient descent
iterates $\{\theta^{(t)}\}_{t\ge 0}$ with step size $\eta=\frac{1}{\lambda_{\max}(S_n)}$ and initialized at $\theta^{(0)}\in\operatorname{range}(S_n)$. Then the iteration complexity $T_{\varepsilon,n}$ satisfies, for every $n$,
\[
T_{\varepsilon,n}
\le
\kappa(X_n)^2\,\log\!\left(\frac{1}{\varepsilon}\right)+1.
\]
Moreover, if $p/n\to\gamma\in[0,\infty)\setminus \{1\}$ as
$n\to\infty$, there exists a constant $C>0$ (depending only on $c_m,c_M$ and
$\gamma$) such that for all sufficiently large $n$,
\[
\mathbb{E}\big[T_{\varepsilon,n}\big]
\le
C\log\!\left(\frac{1}{\varepsilon}\right)+1.
\]
\end{thm}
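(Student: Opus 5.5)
The plan is to prove the deterministic iteration bound first and then take expectations. By Lemma~\ref{lem:gd_psd_general}, with $\eta=1/L_n$ and $\theta^{(0)}\in\operatorname{range}(S_n)$, the suboptimality contracts geometrically. I would use the contraction factor $(1-\mu_n/L_n)$ per step, which is the standard rate for gradient descent with step $1/L_n$ restricted to $\operatorname{range}(S_n)$; the lemma's statement in terms of $q_n$ should be read this way, and if needed I would re-derive it from the eigen-decomposition of $S_n$ on its range. Since $L_n=s_{\max}(X_n)^2/n$ and $\mu_n=s_{\min}(X_n)^2/n$, where $s_{\min}$ is the smallest nonzero singular value and $\operatorname{rank}(X_n)=\min(n,p)$ almost surely, we get $\mu_n/L_n=\kappa(X_n)^{-2}$. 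Thus
\[
g_n(\theta^{(t)})-g_n(\theta_n^\star)\le \big(1-\kappa(X_n)^{-2}\big)^t\big\{g_n(\theta^{(0)})-g_n(\theta_n^\star)\big\}.
\]
Using $1-x\le e^{-x}$, the right side is at most $\varepsilon$ times the initial gap once $t\ge \kappa(X_n)^2\log(1/\varepsilon)$. Taking $t=\lceil \kappa(X_n)^2\log(1/\varepsilon)\rceil$ gives $T_{\varepsilon,n}\le \kappa(X_n)^2\log(1/\varepsilon)+1$. This holds for every $n$ and every realization with full rank. If the initial gap is zero, then $T_{\varepsilon,n}=0$ and the bound is trivial.

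For the expectation bound, I would take expectations of the deterministic inequality to get $\mathbb{E}[T_{\varepsilon,n}]\le \mathbb{E}[\kappa(X_n)^2]\log(1/\varepsilon)+1$. Then I invoke Theorem~\ref{theorem_condition_number} with $r=2$ when $\gamma\in[0,1)$, and Lemma~\ref{lem:double_descent}(i) with $r=2$ when $\gamma\in(1,\infty)$. Both apply because $X_n=Z_n\tilde\Sigma_n^{1/2}$ with $\tilde\Sigma_n$ satisfying \eqref{cond-const}. This gives $\mathbb{E}[\kappa(X_n)^2]\le C$ with $C$ depending only on $c_m$, $c_M$ and $\gamma$, which is the claim.

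The main obstacle is reconciling the rate with the exact form of Lemma~\ref{lem:gd_psd_general}. As literally stated, $q_n=\mu_n/L_n$ would mean faster convergence for worse conditioning, which cannot be the intended meaning, so I would rely on the contraction $1-\mu_n/L_n$. The safest way to secure it is to verify directly. Writing $\theta^{(t)}-\theta_n^\star$ in the eigenbasis of $S_n$ restricted to its range, each component is multiplied by $1-\lambda_i/L_n\in[0,1-\mu_n/L_n]$ at every step. The gap $g_n(\theta)-g_n(\theta_n^\star)=\tfrac12(\theta-\theta_n^\star)^\top S_n(\theta-\theta_n^\star)$ therefore contracts by at least $(1-\mu_n/L_n)^2\le 1-\mu_n/L_n$. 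Measurability of $T_{\varepsilon,n}$ and the almost-sure full-rank property are routine.
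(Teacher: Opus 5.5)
Your proposal is correct and follows the paper's proof essentially step for step: the per-step contraction $(1-\mu_n/L_n)$ from Lemma~\ref{lem:gd_psd_general}, the identity $L_n/\mu_n=\kappa(X_n)^2$, the choice $t_0=\lceil \kappa(X_n)^2\log(1/\varepsilon)\rceil$ via $\log(1-x)\le -x$, and then Theorem~\ref{theorem_condition_number} and Lemma~\ref{lem:double_descent}(i) with $r=2$ for the expectation bound. Your reading of the lemma's $q_n$ as $1-\mu_n/L_n$ is also how the paper itself uses it (the appendix proof sets $q_n:=1-\mu_n/L_n$), so the $q_n=\mu_n/L_n$ in the lemma's statement is just a typo.
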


\begin{proof} First, note that \eqref{cond-const} implies $\tilde\Sigma_n\succ 0$ for all $n$,
and therefore $\mathrm{rank}(\tilde X_n)=\min(n,p)$ almost surely. In particular,
in the wide-matrix regime $p>n$, the smallest nonzero eigenvalue of $S_n$ equals the square of the smallest singular value of $X_n$ (equivalently, the $n$-th singular value).

Define $L_n$ and $\mu_n$ as in Lemma \ref{lem:gd_psd_general}. Applying the results as in  Lemma \ref{lem:gd_psd_general} with $r= \min(n,p)$, it follows that for every $t\ge0$,
\[
g_n(\theta^{(t)})-g_n(\theta_n^\star)
\le
\left(1-\frac{\mu_n}{L_n}\right)^t
\Big\{g_n(\theta^{(0)})-g_n(\theta_n^\star)\Big\}.
\]
Let $q_n:=1-\mu_n/L_n\in(0,1)$. Define, $T^*_{\varepsilon,n}:=\inf\Big\{t\ge 0:\ q_n^{\,t}\le \varepsilon\Big\}$. Then by definition, $T^*_{\varepsilon,n} \geq T_{\varepsilon,n}$.

If we exhibit a (deterministic) integer $t_0$ such that $q_n^{t_0}\le\varepsilon$,
then $t_0$ belongs to the set inside the infimum and therefore
$T^*_{\varepsilon,n}\le t_0$.
Using $\log(1-x)\le -x$ for $x\in(0,1)$ with $x=\mu_n/L_n$ gives
\[
q_n^{\,t}=\exp\!\big(t\log q_n\big)\le \exp\!\left(-t\frac{\mu_n}{L_n}\right).
\]
Thus, choosing
\[
t_0:=\left\lceil \frac{L_n}{\mu_n}\log\!\left(\frac{1}{\varepsilon}\right)\right\rceil
\]
yields $\exp\!\left(-t_0\frac{\mu_n}{L_n}\right)\le \varepsilon$ and hence
$q_n^{t_0}\le \varepsilon$. Consequently,
\[
T_{\varepsilon,n} \le T^*_{\varepsilon,n}
\le
t_0
\le
\frac{L_n}{\mu_n}\log\!\left(\frac{1}{\varepsilon}\right)+1 = \kappa(X_n)^2 \log\!\left(\frac{1}{\varepsilon}\right)+1.
\]
For the expectation bound, note that
\[
\mathbb{E}\big[T_{\varepsilon,n}\big]
\le
\log\!\left(\frac{1}{\varepsilon}\right)\mathbb{E}\big[\kappa(X_n)^2\big]+1.
\]
By Theorem~\ref{theorem_condition_number} and Lemma \ref{lem:double_descent} with $r=2$, under $p/n\to\gamma\in[0,\infty)\setminus \{1\}$
and \eqref{cond-const}, there exists a constant $C>0$ (depending only on
$c_m,c_M$ and $\gamma$) such that $\sup_n \mathbb{E}[\kappa(X_n)^2]\le C$.
This completes the proof.
\end{proof}

\begin{remark}\label{rem:krylov}
Note that the gradient descent iterates belong to the Krylov subspaces
\[
\mathcal{K}_t(S_n,b)=\mathrm{span}\{b,S_nb,\dots,S_n^{t-1}b\},
\]
since each iterate can be written as a polynomial in $S_n$ applied to $b$; see,
for example, \cite{saad2003iterative,trefethen1997numerical}. More generally,
Krylov subspace methods (e.g., the conjugate gradient algorithm) construct
iterates by optimizing over these subspaces, often yielding faster convergence
for symmetric positive definite systems such as \eqref{eq:gram_system}; see, for
example, the discussion after \cite[Theorem~38.5]{trefethen1997numerical}. In
particular, such methods ensure $T_{\varepsilon,n}=O(\kappa(X_n))$. However, even in this case Theorem~\ref{theorem_condition_number} and Lemma \ref{lem:double_descent}  (with $r=1$) will ensure that $\mathbb{E}\big[T_{\varepsilon,n}\big]$ is finite when $p/n\to\gamma\in[0,\infty)\setminus \{1\}$.
\end{remark}

\noindent
Our next result shows that the worst-case iteration complexity, denoted 
by $T_{\varepsilon,n}^{\mathrm{wc}}$, admits a lower bound proportional to $(L_n-\mu_n)/\mu_n$. In the critical regime $p/n\to 1$, this leads to a sharp computational instability, since $\mathbb{E}[T_{\varepsilon,n}^{\mathrm{wc}}]\to\infty$
for every fixed $\varepsilon\in(0,1)$. Hence, stability of gradient-based
iterative procedures in high dimensions is fundamentally tied to moment control
of the condition number of the design matrix. This phenomenon is closely related
to the observations of \cite{poggio2020doubledescentconditionnumber}, who
highlighted a ``double descent'' behavior of the condition number in the context
of solving linear systems. In particular, from the viewpoint of computational
complexity, it is typically easier to solve an under-determined ($n \gtrsim p$) or
over-determined ($p \gtrsim n$) linear system than a nearly well-determined
system with $n \approx p$. 
\begin{lem}\label{lem:GD_worstcase}
Assume that the rows of $X_n\in\mathbb{R}^{n\times p}$ are independent and
identically distributed as $N(0,\tilde\Sigma_n)$, where $\tilde\Sigma_n$
satisfies \eqref{cond-const}. Fix $\varepsilon\in(0,1)$, and define the worst case version of $T_{\varepsilon,n}$
\[
T_{\varepsilon,n}^{\mathrm{wc}}
\ :=\ 
\underset{\substack{\theta^{(0)}\neq \theta_n^\star}}{\sup}T_{\varepsilon,n}.
\]
Then,
\begin{equation}\label{eq:Twc_kappa}
T_{\varepsilon,n}^{\mathrm{wc}}
\ \ge\
\left\lceil
\frac{L_n-\mu_n}{2\mu_n}\,\log\!\Big(\frac{1}{\varepsilon}\Big)
\right\rceil .
\end{equation}

\noindent Finally, if
$p/n\to \gamma = 1$ for $n \to \infty$, then, for every fixed $\varepsilon\in(0,1)$,
\begin{equation}\label{eq:ETwc_lower}
\mathbb{E}\big[T_{\varepsilon,n}^{\mathrm{wc}}\big]\to \infty,
\end{equation}
as $n \to \infty$.
\end{lem}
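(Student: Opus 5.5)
The lower bound \eqref{eq:Twc_kappa} will come from an explicit bad initialization, namely the eigenvector of $S_n$ for the smallest nonzero eigenvalue; the divergence \eqref{eq:ETwc_lower} will then follow from the critical-regime blow-up in Lemma~\ref{lem:double_descent}(ii).

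\textbf{Step 1: reduce to a scalar recursion.} Write $e^{(t)}=\theta^{(t)}-\theta_n^\star$. Since $\nabla g_n(\theta)=S_n\theta-b=S_n(\theta-\theta_n^\star)$ whenever $b\in\operatorname{range}(S_n)$, we get $e^{(t+1)}=(I-\eta S_n)e^{(t)}$ with $\eta=1/L_n$. We also have $g_n(\theta)-g_n(\theta_n^\star)=\tfrac12 e^\top S_n e$.

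\textbf{Step 2: choose the initialization.} Let $v$ be a unit eigenvector of $S_n$ with eigenvalue $\mu_n$, and set $\theta^{(0)}=\theta_n^\star+v$. This point lies in $\operatorname{range}(S_n)$ and differs from $\theta_n^\star$. Then $e^{(t)}=(1-\mu_n/L_n)^t v$, so
\[
g_n(\theta^{(t)})-g_n(\theta_n^\star)=(1-\mu_n/L_n)^{2t}\bigl\{g_n(\theta^{(0)})-g_n(\theta_n^\star)\bigr\}.
\]
Hence $T_{\varepsilon,n}$ for this start is the least $t$ with $(1-x)^{2t}\le\varepsilon$, where $x=\mu_n/L_n$. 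In other words, $t\ge \log(1/\varepsilon)/(2\log(1/(1-x)))$.

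\textbf{Step 3: bound the logarithm.} Use $\log(1/(1-x))=\log\bigl(1+x/(1-x)\bigr)\le x/(1-x)=\mu_n/(L_n-\mu_n)$. This gives
\[
T_{\varepsilon,n}\ge \frac{L_n-\mu_n}{2\mu_n}\log(1/\varepsilon),
\]
and since $T_{\varepsilon,n}$ is an integer the ceiling follows. Taking the supremum over starting points gives \eqref{eq:Twc_kappa}.

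\textbf{Step 4: the degenerate case $L_n=\mu_n$.} This happens with probability zero when $\min(n,p)\ge2$. In any case the right-hand side is then $0$, so the bound is trivial.

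\textbf{Step 5: divergence in the critical regime.} As in Theorem~\ref{thm:gd_gram_condition_number}, $L_n/\mu_n=\kappa(X_n)^2$. Therefore
\[
\mathbb{E}\bigl[T^{\mathrm{wc}}_{\varepsilon,n}\bigr]\ge \tfrac12\log(1/\varepsilon)\bigl(\mathbb{E}[\kappa(X_n)^2]-1\bigr).
\]
By Lemma~\ref{lem:double_descent}(ii) with $r=2$, $\mathbb{E}[\kappa(X_n)^2]\to\infty$ when $p/n\to1$, which yields \eqref{eq:ETwc_lower}.

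The main obstacle is the careful handling of the elementary inequality in Step 3 and of the case $p>n$, where one must make sure the chosen start lies in the range. Choosing the eigenvector from the nonzero spectrum handles the latter.
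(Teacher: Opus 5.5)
Your proposal is correct and follows the paper's proof essentially step for step: the paper also initializes along a unit eigenvector for $\mu_n$, obtains the exact ratio $(1-\mu_n/L_n)^{2t}$, applies $-\log(1-x)\le x/(1-x)$, and then uses $L_n/\mu_n=\kappa(X_n)^2$ with Lemma~\ref{lem:double_descent}(ii) at $r=2$. Your handling of $\nabla g_n(\theta)=S_n(\theta-\theta_n^\star)$ for $b\in\operatorname{range}(S_n)$ is slightly more careful than the paper's, which writes $\theta_n^\star=S_n^{-1}b$ even though $p>n$ can occur when $p/n\to1$. Your treatment of the degenerate case $L_n=\mu_n$ is likewise extra care, not a change of route.
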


\begin{proof}
Let $e^{(t)}:=\theta^{(t)}-\theta_n^\star$. Since $\theta_n^\star=S_n^{-1}b$, we have
$\nabla g_n(\theta)=S_n(\theta-\theta_n^\star)=S_n e$, and hence
\[
e^{(t+1)}=e^{(t)}-\frac{1}{L_n}S_n e^{(t)}
=\Big(I-\frac{1}{L_n}S_n\Big)e^{(t)}.
\]
Let $v_{\min}$ be a unit eigenvector of $S_n$ associated with $\mu_n$, and consider an initialization
such that $e^{(0)}=\alpha v_{\min}$ for some $\alpha\neq 0$. Then, using the recursion in \eqref{eq_gd_linear1}, it follows that
\[
e^{(t)}=\alpha\Big(1-\frac{\mu_n}{L_n}\Big)^t v_{\min}.
\]
Moreover,
\[
g_n(\theta^{(t)})-g_n(\theta_n^\star)
=\frac12\,(e^{(t)})^\top S_n e^{(t)}
=\frac12\,\mu_n\alpha^2\Big(1-\frac{\mu_n}{L_n}\Big)^{2t},
\]
and therefore
\[
\frac{g_n(\theta^{(t)})-g_n(\theta_n^\star)}{g_n(\theta^{(0)})-g_n(\theta_n^\star)}
=\Big(1-\frac{\mu_n}{L_n}\Big)^{2t}.
\]
Consequently, the condition
$g_n(\theta^{(t)})-g_n(\theta_n^\star)\le \varepsilon\,(g_n(\theta^{(0)})-g_n(\theta_n^\star))$
holds if and only if (for this particular initialization),
$T_{\varepsilon,n} \ge \left\lceil \frac{\log(1/\varepsilon)}{-2\log(1-\mu_n/L_n)}\right\rceil$.
Taking the supremum over $\theta^{(0)}\neq\theta_n^\star$ yields 
\begin{align*}
  T_{\varepsilon,n}^{\mathrm{wc}}  \geq  \left\lceil \frac{\log(1/\varepsilon)}{-2\log(1-\mu_n/L_n)}\right\rceil
\end{align*}

For \eqref{eq:Twc_kappa}, set $x=\mu_n/L_n\in(0,1)$ and use the elementary bound $-\log(1-x)\le x/(1-x)$ for $x\in(0,1)$ to obtain
\[
T_{\varepsilon,n}^{\mathrm{wc}}
\ \ge\
\left\lceil
\frac{L_n-\mu_n}{2\mu_n}\,\log\!\Big(\frac{1}{\varepsilon}\Big)
\right\rceil,
\]
which yields \eqref{eq:Twc_kappa} after taking ceilings.

Finally, if
$p/n\to \gamma = 1$ for $n \to \infty$ Then, for every fixed $\varepsilon\in(0,1)$, applying Lemma \ref{lem:double_descent} (with $r =2$, since $L_n/\mu_n=\kappa(X_n)^2$) gives $\mathbb{E}\big[T_{\varepsilon,n}^{\mathrm{wc}}\big]\to \infty$ as $n \to \infty$.
\end{proof}

\section{Sub-Gaussian Extensions: Limitations and Caveats}\label{sec_subg}

\noindent
In the high-dimensional statistics literature, many results established under Gaussian
assumptions continue to hold under sub-Gaussianity. This is 
largely because, by definition, a sub-Gaussian random variable has tails that are
dominated by those of an appropriate Gaussian distribution. As a consequence,
many asymptotic and nonasymptotic analyses that rely primarily on tail
probability bounds extend naturally from the Gaussian to the sub-Gaussian
setting. In particular, as noted in Remark \ref{remark:subg}, the moment bounds for the maximum singular value in Lemma \ref{max_singular_value} can be extended to general sub-Gaussian settings. However, we show in this section that this is {\bf not} the case for the smallest singular value. In particular, our main results in Section \ref{sec:main_results}---Theorem~\ref{theorem_condition_number} and Theorem~\ref{min_singular_value}---fail to hold in general sub-Gaussian settings. We start by recalling the definition of a sub-Gaussian distribution.

\newcommand{\snorm}[1]{\left\lVert#1\right\rVert_{\psi_2}}

\begin{definition}[\textbf{Sub-Gaussian random variable}]\label{def2}
A mean-zero random variable $X$ is called \emph{sub-Gaussian} if there exists a
constant $k_1>0$ such that
\[
\mathbb{P}\big(|X|>t\big)\le 2\exp\!\left(-\frac{t^2}{k_1^2}\right),
\qquad \text{for all } t\ge 0 .
\]
\end{definition}

\noindent
If $X$ is a sub-Gaussian random variable, then it satisfies
\[
\big(\mathbb{E}|X|^p\big)^{1/p}\leq k_2\sqrt{p}
\]
for some constant $k_2>0$. The sub-Gaussian norm of $X$ is defined by
\[
\snorm{X}\coloneqq \sup_{\substack{p\geq1}}\;p^{-1/2}\big(\mathbb{E}|X|^p\big)^{1/p}.
\]

\noindent
A mean-zero random vector $X\in\mathbb{R}^q$ is said to be \emph{sub-Gaussian} if
for every $u\in\mathcal{S}^{q-1}$, the random variable $u^{\top}X$ is
sub-Gaussian. The sub-Gaussian norm of the random vector $X$ is defined as
\[
\snorm{X}\coloneqq \sup_{\substack{u\in\mathcal{S}^{q-1}}}\snorm{u^{\top}X}.
\]

\noindent
See \cite{vershynin2018high} for further details. We now provide the promised counterexample. 
\begin{prop}\label{prop:subg_counterexample_r1}
Let $X_n \in \mathbb{R}^{n \times p}$ have i.i.d.\ entries $\{X_{ij}\}$, where $X_{ij} = Z_{ij} U_{ij}$ for all $(i,j)$. The variables $U_{ij}$ (supported on (0,1)) are i.i.d. with distribution function
\begin{equation} \label{sub:Gaussian:CDF}
F(u) = \frac{1}{\log\!\big(\frac{e}{u}\big)}, \qquad u \in (0,1).
\end{equation}

The variables $Z_{ij}$ are i.i.d.\ Rademacher random variables independent of $U_{ij}$; that is, $P(Z_{ij}=1)=P(Z_{ij}=-1)=\frac{1}{2}$. Then $X_{ij}$ is mean-zero sub-Gaussian (hence $X_n$ has i.i.d.\ mean-zero sub-Gaussian entries). Then, for every $n \geq 1$ and $p \geq 2$,
\[
\mathbb{E}\!\left[\frac{\sqrt{n}}{s_{\min}(X_n)}\right] = \infty,
\qquad
\mathbb{E}\big[\kappa(X_n)\big] = \infty.
\]
\end{prop}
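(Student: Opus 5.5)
The plan is to show directly that $s_{\min}(X_n)$ is very often extremely small. It is bounded above by the norm of a single column (or row). The distribution $F$ in \eqref{sub:Gaussian:CDF} is chosen so that $P(U_{ij}\le u)$ decays only logarithmically as $u\downarrow 0$, and this alone makes every negative moment infinite.

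\emph{Step 1 (sub-Gaussianity).} Since $|X_{ij}|=U_{ij}\in(0,1)$, the entries are bounded, so Definition~\ref{def2} holds with, e.g., $k_1=1$ (for $t\ge1$ the probability is $0$, and for $t<1$ we have $2e^{-t^2}\ge 2e^{-1}\ge1$). The symmetry of the Rademacher $Z_{ij}$, which is independent of $U_{ij}$, gives $\mathbb{E}X_{ij}=0$. Note that $F$ is a genuine continuous CDF on $(0,1)$: it increases from $0$ at $0^+$ to $1$ at $1$.

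\emph{Step 2 (deterministic upper bound on $s_{\min}$).} If $n\ge p$, then $s_{\min}(X_n)=\min_{\|v\|=1}\|X_nv\|\le \|X_ne_1\|_2$, the norm of the first column, which is at most $\sqrt n\,M$ with $M:=\max_{1\le i\le n}U_{i1}$. If $n<p$, we use $s_{\min}(X_n)=s_{\min}(X_n^\top)$ and bound it in the same way by the norm of the first row, which is at most $\sqrt p\,\max_j U_{1j}$. In either case $\sqrt n/s_{\min}(X_n)\ge c_{n,p}/M$, where $M$ is a maximum of $m=\max(n,p)$ or $m=\min(n,p)$ i.i.d.\ copies of $U$ and $c_{n,p}>0$ is deterministic.

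\emph{Step 3 (the key divergence).} We have $P(M\le u)=F(u)^m=(1+\log(1/u))^{-m}$. By the tail integration identity,
\[
\mathbb{E}[1/M]=\int_0^\infty P(1/M>t)\,dt\ \ge\ \int_1^\infty (1+\log t)^{-m}\,dt=\infty,
\]
because a power of a logarithm cannot beat $t^{-1}$. This gives $\mathbb{E}[\sqrt n/s_{\min}(X_n)]=\infty$.

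\emph{Step 4 (condition number).} This is the only delicate point, since $s_{\max}$ must be bounded below by something independent of $M$.
\begin{itemize}
\item When $n\ge p\ge2$: we have $s_{\max}(X_n)\ge |X_{12}|=U_{12}$, an entry of the second column, which is independent of the first column. Hence $\kappa(X_n)\ge U_{12}/(\sqrt n M)$, and by independence $\mathbb{E}\kappa(X_n)\ge n^{-1/2}\,\mathbb{E}[U_{12}]\,\mathbb{E}[1/M]=\infty$, because $\mathbb{E}U_{12}>0$.
\item When $n<p$: we use an entry from a second row in the same way. This requires $n\ge2$.
\end{itemize}
The main obstacle is precisely this decoupling, and it is also where $p\ge2$ enters. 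I flag that the degenerate case $n=1$ must be excluded for the $\kappa$ claim: a $1\times p$ matrix has a single singular value, so $\kappa\equiv1$. The $s_{\min}$ claim still holds there by Steps 2--3. I would therefore state the $\kappa$ part for $\min(n,p)\ge2$ or $n\ge p\ge 2$.
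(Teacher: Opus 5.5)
Your proof is correct. Its core is the same as the paper's: bound $s_{\min}(X_n)$ by the norm of one column and use $\mathbb{P}(\max_i U_{ij}\le \epsilon)=F(\epsilon)^n=(\log(e/\epsilon))^{-n}$, whose merely logarithmic decay makes $\int_1^\infty(\log(et))^{-n}\,dt$ diverge.

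Where you differ is the decoupling for $\kappa$. The paper introduces the event $G_j=\{\|X_{-j}\|_F^2\ge \tfrac12 n(p-1)m_2\}$ and controls it by a Bernstein-type inequality. It deduces $s_{\max}(X_n)\ge C^*(p)\sqrt n$ on $G_j$, independently of the small-column event $E_j(\epsilon)$, and then integrates the resulting lower tail $\mathbb{P}(\kappa(X_n)\ge t)\ge Q_0(n)(\log(et/C^*(p)))^{-n}$. You simply use $s_{\max}(X_n)\ge U_{12}$ and Tonelli, $\mathbb{E}[U_{12}/M]=\mathbb{E}[U_{12}]\,\mathbb{E}[1/M]$. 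Your route needs no concentration inequality and extends verbatim to every $r>0$, since $\mathbb{E}[M^{-r}]\ge\int_1^\infty(1+r^{-1}\log t)^{-m}\,dt=\infty$. The paper's route keeps $s_{\max}$ at its natural scale $\sqrt n$. It therefore also shows that $\kappa(X_n)$ is at least a constant multiple of $\sqrt n/s_{\min}(X_n)$ on an event of probability close to one. That is more informative, but not needed for the conclusion.

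Two further points are in your favour. First, the paper uses $s_{\min}(X_n)\le\|X_ne_j\|_2$ for all $n,p$. With $s_{\min}$ defined as the $\min(n,p)$-th singular value, this inequality holds only when $n\ge p$; for $n=1$, $s_{\min}$ is the full row norm. Your passage to a row norm via $X_n^\top$ is what actually covers $p>n$. Second, your caveat is right. For $n=1$ (hence $p>n$) there is a single singular value, so $\kappa(X_n)\equiv 1$ and the $\kappa$ claim as stated is false. The correct range for that claim is $\min(n,p)\ge 2$, while the $s_{\min}$ claim holds for all $n,p$.

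One small slip in Step 1: $2e^{-1}\approx 0.74<1$, so your justification of $k_1=1$ is wrong as written. The bound still holds, because $\mathbb{P}(U>t)\le\log(1/t)\le 0.19$ for $t\ge\sqrt{\log 2}$, but taking $k_1=2$ avoids the issue. Also, in both cases of Step 2 one has $m=\max(n,p)$.
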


\begin{proof}
Since $-1 <X_{ij}<1$ almost surely, the random variable $X_{ij}$ is bounded and
therefore sub-Gaussian. Also, $\mathbb{E}[X_{ij}]= \mathbb{E}[ Z_{ij} U_{ij}]=  \mathbb{E}[ Z_{ij}]\mathbb{E}[U_{ij}]=0$.

Fix $\epsilon\in(0,1)$ and any column index $j\in\{1,\dots,p\}$. Define
\[
E_j(\epsilon)=\{|X_{ij}|\le \epsilon \text{ for all } i=1,\dots,n\}.
\]
By independence and since $|X_{ij}| \overset{d}{=} U_{ij}$,
\[
\mathbb{P}\big(E_j(\epsilon)\big)=\big(F(\epsilon)\big)^n
=\left(\frac{1}{\log\!\big(\frac{e}{\epsilon}\big)}\right)^n.
\]
On the event $E_j(\epsilon)$ we have
\[
s_{\min}(X_n)\le \|X_n e_j\|_2 \le \epsilon\sqrt{n},
\]
and hence
\begin{align}\label{eq_counter_example}
    \mathbb{P}\!\left(\frac{\sqrt{n}}{s_{\min}(X_n)}\ge \frac{1}{\epsilon}\right)
\ge
\left(\frac{1}{\log\!\big(\frac{e}{\epsilon}\big)}\right)^n.
\end{align}
Using the tail integral representation $\mathbb{E}[Y]=\int_0^\infty
\mathbb{P}(Y>t)\,dt$, we obtain
\begin{align*}
\mathbb{E}\!\left[\frac{\sqrt{n}}{s_{\min}(X_n)}\right]
&\ge
\int_{1}^{\infty}
\mathbb{P}\!\left(\frac{\sqrt{n}}{s_{\min}(X_n)}\ge t\right)\,dt
\ge
\int_{1}^{\infty}
\left(\frac{1}{\log(et)}\right)^n dt.
\end{align*}
The last integral is infinite for every fixed $n\ge 1$ since
$\int_1^\infty (\log t)^{-n}\,dt = \infty$  for every fixed $n\ge 1$. This proves
$\mathbb{E}[\sqrt{n}/s_{\min}(X_n)]= \infty$  for every fixed $n\ge 1$.

To show $\mathbb{E}[\kappa(X_n)] = \infty$, drop column $j$ from matrix $X_n$ and denote the
remaining matrix by $X_{-j}$. Since the entries are i.i.d. and hence independent
across columns, the event $E_j(\epsilon)$ is independent of any event determined
by $X_{-j}$. Let $m_2=\mathbb{E}[X_{ij}^2]\in(0,1)$ and define
\[
G_j=\left\{\|X_{-j}\|_F^2 \ge \frac{1}{2}n(p-1)m_2\right\}.
\]
Because $0\le X_{ij}^2\le 1$, a Bernstein-type inequality (\cite{vershynin2011introductionnonasymptoticanalysisrandom}) yields
$\mathbb{P}(G_j)\ge 1-\exp\{- cn(p-1)\}$ for some universal constant $c>0$. Set
$Q_0(n):=\mathbb{P}(G_j)$ and note that $Q_0(n) > 0$ for every $n \geq 1$ and $p \geq 2$.

On $G_j$,
\[
s_{\max}(X_n)\ge \frac{\|X_n\|_F}{\sqrt{p}}
\ge \frac{\|X_{-j}\|_F}{\sqrt{p}}
\ge \left(\sqrt{\frac{m_2}{4}\frac{p-1}{p}}\right)\sqrt{n}
=:C^*(p)\sqrt{n}.
\]
Therefore, on $E_j(\epsilon)\cap G_j$,
\[
\kappa(X_n)=\frac{s_{\max}(X_n)}{s_{\min}(X_n)}
\ge
\frac{C^*(p)\sqrt{n}}{\epsilon\sqrt{n}}
=
\frac{C^*(p)}{\epsilon}.
\]
By independence of $E_j(\epsilon)$ and $G_j$,
\[
\mathbb{P}\!\left(\kappa(X_n)\ge \frac{C^*(p)}{\epsilon}\right)
\ge
\mathbb{P}(E_j(\epsilon))\,\mathbb{P}(G_j)
=
Q_0(n)\left(\frac{1}{\log\!\big(\frac{e}{\epsilon}\big)}\right)^n.
\]
Let $t=C^*(p)/\epsilon$ (so $\epsilon=C^*(p)/t$). Then for all $t\ge C^*(p)$,
\[
\mathbb{P}\big(\kappa(X_n)\ge t\big)
\ge
Q_0(n)\left(\frac{1}{\log\!\big(\frac{et}{C^*(p)}\big)}\right)^n.
\]
Using $\mathbb{E}[Y]=\int_0^\infty \mathbb{P}(Y>t)\,dt$ for $Y\ge 0$ yields
\[
\mathbb{E}[\kappa(X_n)]
\ge
\int_{C^*(p)}^\infty \mathbb{P}(\kappa(X_n)\ge t)\,dt
\ge Q_0(n) \int_{\sqrt{\frac{m_2}{4}}}^\infty
\left(\frac{1}{\log\!\big(\frac{8et}{\sqrt{m_2}}\big)}\right)^n dt,
\]
for $n \geq 1$. The last step follows since $(p-1)/p \in [1/2,1)$ for every $p \ge 2$. Hence, the last integral diverges since for all $n \geq 1$ and $p \geq 2$, $Q_0(n) > 0$, implying $\mathbb{E}[\kappa(X_n)]= \infty$  for every fixed $n\ge 1$ and $p \geq 2$. This completes the proof.
\end{proof}

\begin{remark}
Proposition~\ref{prop:subg_counterexample_r1} immediately implies that for every
$r>1$ and $n \ge 1$ and $p \ge 2$,
\[
\mathbb{E}\!\left[\left(\frac{\sqrt{n}}{s_{\min}(X_n)}\right)^r\right] = \infty,
\qquad
\mathbb{E}\!\left[\kappa(X_n)^r\right] = \infty,
\]
by the standard monotonicity property of moments. For $0<r<1$, the same
tail-integral argument used in the proof of
Proposition~\ref{prop:subg_counterexample_r1} yields the same conclusion; since
the steps are virtually identical, we omit the details. Hence, the conclusions of
Theorem~\ref{min_singular_value} and Theorem~\ref{theorem_condition_number} do not extend to general sub-Gaussian designs.
\end{remark}

\begin{remark}
 Proposition~\ref{prop:subg_counterexample_r1} may be viewed as a \emph{random matrix} example of a phenomenon well known in the general theory of convergence of random variables. While \cite[Theorem~5.39]{vershynin2011introductionnonasymptoticanalysisrandom} establishes high-probability bounds for the minimum singular value under broad sub-Gaussian assumptions, these bounds govern convergence in probability. Thus, Proposition~\ref{prop:subg_counterexample_r1} may be viewed as a random matrix instance where convergence in probability holds but convergence in expectation (and hence moment stability) fails.  
\end{remark}

\noindent
We next show that for the sub-Gaussian distribution in
Proposition~\ref{prop:subg_counterexample_r1}, the analogue of Theorem \ref{thm:inv_cov_error_rate} does not hold. In particular, let $X_n \in \mathbb{R}^{n \times p}$ have i.i.d.\ entries $\{X_{ij}\}$, where $X_{ij} = Z_{ij} U_{ij}$ for all $(i,j)$, where $\{U_{ij}\}$ and $\{Z_{ij}\}$ are distributed as in the statement of Proposition \ref{prop:subg_counterexample_r1}. Define the (uncentered) sample covariance (Gram) matrix
\[
S_n:=\frac{1}{n}X_n^\top X_n,
\]
and let $\Sigma:=\mathbb{E}[S_n]$. We have already discussed in Section~\ref{sec:cov} that the positive moments of the estimation error of $S_n$, namely $\mathbb{E}\big[\|S_n - \Sigma\|_2^r\big]$, admit a nonasymptotic bound in the Gaussian case (see \eqref{moment_bound_S}), which can be extended to the sub-Gaussian setting (\ \cite{gram}).

Our next result shows, however, that the expected estimation error of the inverse sample covariance, namely $\mathbb{E}\big[\|S_n^{-1} - \Sigma^{-1}\|_2\big]$, may diverge to $\infty$ for a particular class of sub-Gaussian distributions. Consequently, in contrast to $\mathbb{E}\big[\|S_n - \Sigma\|_2\big]$, it is not possible to establish a uniform rate for $\mathbb{E}\big[\|S_n^{-1} - \Sigma^{-1}\|_2\big]$ over a general class of sub-Gaussian distributions.

\begin{prop} \label{prop:inv_cov_infinite_mean}
Let $X_n \in \mathbb{R}^{n \times p}$ have i.i.d. entries with distribution function given in (\ref{sub:Gaussian:CDF}), and let $S_n = \frac{1}{n} X_n^\top X_n$ denote the corresponding sample covariance matrix. Then 
\[
\mathbb{E}\big[\|S_n^{-1}-\Sigma^{-1}\|_2\big] = \infty,
\]
for every $n \geq 1$ and $p \leq n$.
\end{prop}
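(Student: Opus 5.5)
The plan is to reduce the claim to the divergence of $\mathbb{E}[\|S_n^{-1}\|_2]$ and then reuse the small-ball construction from Proposition~\ref{prop:subg_counterexample_r1}. Since the entries are i.i.d.\ with mean zero and second moment $m_2=\mathbb{E}[X_{ij}^2]\in(0,1)$, we have $\Sigma=\mathbb{E}[S_n]=m_2 I_p$ and $\Sigma^{-1}=m_2^{-1}I_p$. (Here I read the entry distribution as the symmetrized $X_{ij}=Z_{ij}U_{ij}$ of Proposition~\ref{prop:subg_counterexample_r1}. The argument below only uses $|X_{ij}|\overset{d}{=}U_{ij}$, so it applies equally to entries with CDF \eqref{sub:Gaussian:CDF} directly.) By the reverse triangle inequality,
\[
\|S_n^{-1}-\Sigma^{-1}\|_2\ \ge\ \|S_n^{-1}\|_2-m_2^{-1}=\frac{1}{\lambda_{\min}(S_n)}-m_2^{-1}=\frac{n}{s_{\min}(X_n)^2}-m_2^{-1}.
\]
The term $m_2^{-1}$ is a finite constant. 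It therefore suffices to show $\mathbb{E}[n/s_{\min}(X_n)^2]=\infty$. (On the null event where $S_n$ is singular we interpret the norm as $+\infty$; the entries are continuous, so for $p\le n$ this has probability zero anyway.)

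Next I would bound $\mathbb{E}[n/s_{\min}(X_n)^2]$ below using the event $E_j(\epsilon)$ from the proof of Proposition~\ref{prop:subg_counterexample_r1}. On that event $s_{\min}(X_n)\le \epsilon\sqrt n$, so $n/s_{\min}(X_n)^2\ge \epsilon^{-2}$. Setting $t=\epsilon^{-2}$ gives, for $t>1$,
\[
\mathbb{P}\!\left(\frac{n}{s_{\min}(X_n)^2}\ge t\right)\ \ge\ F(t^{-1/2})^n=\left(\frac{1}{1+\tfrac12\log t}\right)^n .
\]
Integrating the tail, we get $\mathbb{E}[n/s_{\min}(X_n)^2]\ge\int_1^\infty (1+\tfrac12\log t)^{-n}\,dt=\infty$, because every power of a logarithm grows more slowly than $t$. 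Alternatively, one can skip this computation: $\mathbb{E}[n/s_{\min}^2]\ge(\mathbb{E}[\sqrt n/s_{\min}])^2=\infty$ by Jensen and Proposition~\ref{prop:subg_counterexample_r1}. That shortcut needs $p\ge2$ only through that proposition's statement, but its first half never actually uses $p\ge 2$, so the direct computation is cleaner and also covers $p=1$.

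The main point needing care is the reduction step. Subtracting a constant from a random variable with infinite expectation still gives infinite expectation, but only once we check that the negative part is integrable. Here it is, since it is bounded by $m_2^{-1}$. So $\mathbb{E}[\|S_n^{-1}-\Sigma^{-1}\|_2]\ge \mathbb{E}[n/s_{\min}^2]-m_2^{-1}=\infty$ for every $n\ge1$ and $p\le n$. I would also state explicitly that $m_2>0$, which holds because $U_{ij}$ is not almost surely zero, so that $\Sigma$ is invertible.
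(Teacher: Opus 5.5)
Your proposal is correct and follows the paper's route: the reverse triangle inequality reduces the claim to $\mathbb{E}\|S_n^{-1}\|_2=\mathbb{E}[(\sqrt n/s_{\min}(X_n))^2]=\infty$, and this divergence comes from the single-column small-ball event $E_j(\epsilon)$ of Proposition~\ref{prop:subg_counterexample_r1}. Your explicit tail bound $(1+\tfrac12\log t)^{-n}$ writes out the ``same tail-integral argument'' that the paper only cites (and, as you note, it also covers $p=1$). One small caveat: if the entries literally have CDF \eqref{sub:Gaussian:CDF}, i.e.\ are not symmetrized, then with $\mu=\mathbb{E}[U_{11}]$ one gets $\Sigma=(m_2-\mu^2)I_p+\mu^2\mathbf{1}\mathbf{1}^\top$ rather than $m_2 I_p$; this matrix is still positive definite, so $\|\Sigma^{-1}\|_2<\infty$ and your reduction goes through unchanged.
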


\begin{proof}
Observe that for $p \leq n$
\[
\|S_n^{-1}\|_2=\frac{1}{\lambda_{\min}(S_n)}
=\frac{1}{\lambda_{\min}\!\left(\frac{1}{n}X_n^\top X_n\right)}
=\frac{n}{s_{\min}(X_n)^2}
=\left(\frac{\sqrt{n}}{s_{\min}(X_n)}\right)^2.
\]
By Proposition~\ref{prop:subg_counterexample_r1} (and the same tail-integral
argument used there), we have
\[
\mathbb{E}\!\left[\left(\frac{\sqrt{n}}{s_{\min}(X_n)}\right)^2\right]= \infty,
\]
for every $n \geq 1$ and $p \leq n$. and therefore $\mathbb{E}\|S_n^{-1}\|_2 = \infty$ for every $n \geq 1$..

Finally, using the reverse triangle inequality,
\[
\|S_n^{-1}-\Sigma^{-1}\|_2 \ge \|S_n^{-1}\|_2-\|\Sigma^{-1}\|_2.
\]
Taking expectations gives
\[
\mathbb{E}\big[\|S_n^{-1}-\Sigma^{-1}\|_2\big]
\ge
\mathbb{E}\|S_n^{-1}\|_2-\|\Sigma^{-1}\|_2
= \infty,
\]
for every $n \geq 1$ and $p \leq n$.
\end{proof}

\subsection{Connection to small-ball probabilities for the smallest singular value}\label{connection:small_ball}

To understand why our results, which hold in the Gaussian case, do not extend to
a general class of sub-Gaussian distributions, it is useful to recall a
well-known concept in random matrix theory known as \textit{small-ball
probabilities}; see, e.g.,
\cite{littlewood_offword,Li1999MetricEntropySmallBall,
Tikhomirov2016SmallestSingularNoMoments}. Roughly speaking, the term
\textit{small-ball probability} refers to lower bounds of the form
\[
\mathbb{P}\big(|\langle X_n,u\rangle|\ge \epsilon\big)\ge c
\qquad \text{for all } u\in\mathcal{S}^{p-1},
\]
for some fixed constants $\epsilon,c>0$. Intuitively, it quantifies how
unlikely it is for a random variable (or a linear functional of a random
vector) to fall inside a small neighborhood of zero. Such bounds play a
fundamental role in controlling the smallest singular value of random matrices,
since excessive mass near zero may allow certain columns or directions to
become nearly degenerate and thus force $s_{\min}(X_n)$ to be extremely small.

To connect our results to the existing literature, we recall
\cite[Theorem~1.1]{RudelsonVershynin2009SmallestSingularRectangular}, stated
below in our notation. 

\begin{thm}[\cite{RudelsonVershynin2009SmallestSingularRectangular}, Theorem 1.1] \label{thm_vershynin}
Let $Z_n\in\mathbb{R}^{n\times p}$ with $n\ge p$ have i.i.d.\ entries which are
independent copies of a centered sub-Gaussian random variable with unit
variance. Then there exist constants $C,c>0$, depending only (polynomially) on
the sub-Gaussian norm $\|Z_{11}\|_{\psi_2}$, such that for every $\varepsilon>0$,
\[
\mathbb{P}\!\left(
s_{\min}(Z_n)\le \varepsilon\big(\sqrt{n}-\sqrt{p-1}\big)
\right)
\le
(C\varepsilon)^{\,n-p+1}+\exp\{-cn\}.
\]
\end{thm}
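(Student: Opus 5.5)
The plan follows the compressible/incompressible decomposition of the sphere introduced by Rudelson and Vershynin, adapted to rectangular matrices. We write
\[
s_{\min}(Z_n)=\inf_{x\in\mathcal S^{p-1}}\norm{Z_nx}
\]
and split $\mathcal S^{p-1}=\mathrm{Comp}(\delta,\rho)\cup\mathrm{Incomp}(\delta,\rho)$. Here $\mathrm{Comp}(\delta,\rho)$ consists of unit vectors within Euclidean distance $\rho$ of a $\delta p$-sparse vector, for small constants $\delta,\rho$ depending only on $\|Z_{11}\|_{\psi_2}$. The two pieces are handled separately, and they produce the two terms in the bound.

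\emph{Compressible vectors.} These give the $\exp\{-cn\}$ term. For a fixed $x\in\mathcal S^{p-1}$, the coordinates $\langle \text{row}_i,x\rangle$ are i.i.d.\ with unit variance and bounded $\psi_2$-norm. A Paley--Zygmund (small-ball) argument together with tensorization gives
\[
\mathbb P\big(\norm{Z_nx}\le c_0\sqrt n\big)\le e^{-c_1 n}.
\]
The set $\mathrm{Comp}(\delta,\rho)$ admits a $\rho$-net of size at most $\binom{p}{\delta p}(3/\rho)^{\delta p}\le e^{c_1 n/2}$ once $\delta,\rho$ are small. On the event $\norm{Z_n}\le C\sqrt n$, whose complement has probability at most $e^{-cn}$ by Corollary~5.35 of \cite{vershynin2011introductionnonasymptoticanalysisrandom}, the net approximation shows
\[
\inf_{x\in\mathrm{Comp}}\norm{Z_nx}\ge c\sqrt n\ge c\,(\sqrt n-\sqrt{p-1}),
\]
except on an event of probability at most $e^{-cn}$.

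\emph{Incompressible vectors.} These must produce the polynomial term $(C\varepsilon)^{n-p+1}$, and this is the main obstacle. In the square case one reduces to the distance from one column to the span of the others. For $n>p$ that reduction only yields $\varepsilon^{1}$, not $\varepsilon^{n-p+1}$. I would instead work with the distance from a column $Z^{(j)}$ to the span $H_j$ of the other $p-1$ columns. The orthogonal complement $H_j^\perp$ has dimension $d=n-p+1$, so this distance equals $\norm{P_{H_j^\perp}Z^{(j)}}$, the norm of a projection onto a random $d$-dimensional subspace independent of $Z^{(j)}$.

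Incompressible $x$ have many coordinates of size about $1/\sqrt p$. By an averaging lemma one then gets
\[
\mathbb P\Big(\inf_{x\in\mathrm{Incomp}}\norm{Z_nx}\le \varepsilon\rho\sqrt{d/p}\Big)\le \frac{C}{\delta p}\sum_j \mathbb P\big(\mathrm{dist}(Z^{(j)},H_j)\le \varepsilon\sqrt d\big).
\]
Bounding this distance requires a multidimensional Littlewood--Offord small-ball estimate,
\[
\mathbb P\big(\norm{P_E Z^{(j)}}\le \varepsilon\sqrt d\big)\le (C\varepsilon)^d+e^{-cn},
\]
valid when the random subspace $E=H_j^\perp$ is \emph{arithmetically unstructured}. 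Unstructured here means that every unit vector in $E$ has least common denominator $\mathrm{LCD}\ge e^{cn/d}$, measured in an appropriate normalized sense. The hardest step is proving that $H_j^\perp$ is unstructured with probability at least $1-e^{-cn}$. I would prove this by a union bound over level sets of the LCD, using nets of vectors with LCD of a given size whose cardinality is controlled by integer-point counting, combined with the one-dimensional small-ball bound $\mathbb P(|\langle Y,a\rangle|\le\varepsilon)\lesssim \varepsilon+1/\mathrm{LCD}(a)$ tensorized over the $n$ rows of the matrix $Z_{-j}^\top$ that annihilates $E$. This mirrors the square-case argument, but it must be run in dimension $d$ with the LCD defined for subspaces.

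Finally, $\rho\sqrt{d/p}\gtrsim d/\sqrt n\asymp \sqrt n-\sqrt{p-1}$ up to constants, using $\sqrt n-\sqrt{p-1}=d/(\sqrt n+\sqrt{p-1})$. Rescaling $\varepsilon$ and adding the compressible and incompressible bounds gives the stated inequality, with constants depending polynomially on $\|Z_{11}\|_{\psi_2}$.
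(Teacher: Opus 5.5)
The paper does not prove this statement; it quotes Theorem~1.1 of Rudelson--Vershynin, so your outline has to be judged against their argument. Your compressible step and your sketch of the distance theorem (random $H_j^\perp$ is arithmetically unstructured, then a multidimensional small-ball bound) are in line with that argument.

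\textbf{The gap is in the incompressible reduction, at your last line.} The claim $\rho\sqrt{d/p}\gtrsim d/\sqrt n$ is equivalent to $dp\lesssim\rho^2 n$. Since $d+p=n+1$ forces $dp\ge\tfrac n2\min(d,p)$, this holds only when $d$ or $p$ is bounded. In general you are short by a factor $\sqrt{dp/n}$, which is of order $\sqrt d$ when $p\asymp n$.

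This is not a weakness of the probability estimate. For spread $x$ one has $|x_j|\approx p^{-1/2}$, and typically $\mathrm{dist}(Z^{(j)},H_j)\approx\sqrt d$. So the inequality $\|Z_nx\|_2\ge|x_j|\,\mathrm{dist}(Z^{(j)},H_j)$ that you average can never certify more than order $\sqrt{d/p}$, whereas $s_{\min}(Z_n)\asymp d/\sqrt n$. Absorbing the factor by rescaling $\varepsilon$ turns $(C\varepsilon)^d$ into roughly $(C\sqrt d\,\varepsilon)^d$. That bound is vacuous for all $\varepsilon\gtrsim d^{-1/2}$, so you do not even obtain $s_{\min}(Z_n)\gtrsim\sqrt n-\sqrt{p-1}$ with high probability.

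Your diagnosis of the square-case reduction is also off. Combined with the codimension-$d$ distance bound, the one-column reduction does give the exponent $d$; what it gets wrong is the scale. The reduction you then run is exactly that one-column reduction.

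\textbf{What is missing is a way to collect the contributions of many coordinates of $x$ at once.} This is the genuinely new ingredient in the rectangular case. For incompressible $x$, choose a set $J$ of $k\asymp\lambda d$ spread coordinates (where $|x_j|\gtrsim\rho/\sqrt p$), so that $\|x_J\|_2\gtrsim\rho\sqrt{k/p}$. Then use
\[
\|Z_nx\|_2\ \ge\ \mathrm{dist}\big(Z_Jx_J,\,H_{J^c}\big)=\|P_EZ_Jx_J\|_2 ,
\]
where $H_{J^c}$ is spanned by the columns outside $J$ and $E=H_{J^c}^\perp$ has dimension $d+k-1$.

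It then suffices to show $s_{\min}(P_EZ_J)\gtrsim\varepsilon\sqrt{d+k-1}$ for this tall $(d+k-1)\times k$ matrix. That yields the correct scale $\rho\sqrt{k(d+k-1)/p}\gtrsim d/\sqrt n$. The steps are as follows.
\begin{enumerate}
\item For fixed $z\in\mathcal S^{k-1}$, the vector $Z_Jz$ again has i.i.d.\ centered, unit-variance sub-Gaussian coordinates and is independent of $E$.
\item Hence the distance theorem, applied in codimension $d+k-1$ rather than $d$, gives $\mathbb P(\|P_EZ_Jz\|_2\le\varepsilon\sqrt{d+k-1})\le(C\varepsilon)^{d+k-1}+e^{-cn}$.
\item A net on $\mathcal S^{k-1}$ of cardinality $\lesssim k(C/\varepsilon)^{k-1}$ spends $k-1$ of these powers and leaves $(C\varepsilon)^{d}$.
\item Averaging over the admissible $J$ costs a factor of order $(C/\delta)^k$, which is absorbed into $C^d$ since $k\lesssim d$.
\end{enumerate}
The net step needs control of $\|P_EZ_J\|$ at scale $\sqrt{d+k}$ rather than $\sqrt n$, and this requires some care. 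Without a block reduction of this kind, your outline proves at best $s_{\min}(Z_n)\gtrsim\sqrt{(n-p+1)/p}$, not the stated bound.
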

The above result shows that for general i.i.d.\ sub-Gaussian designs,
the lower tail of $s_{\min}(Z_n)$ admits at best a polynomial decay in
$\varepsilon$, up to an additional exponential term $\exp\{-cn\}$ which does not
depend on $\varepsilon$. In the Gaussian case, our Theorem~\ref{min_singular_value}
(see in particular \eqref{lemma min:5}) yields a polynomial-type bound of the
same flavor, {\em but importantly does not require the extra exponential term}. 

This distinction is crucial. Indeed, as emphasized in
\cite{RudelsonVershynin2009SmallestSingularRectangular}, the presence of the
term $\exp\{-cn\}$ is unavoidable for general sub-Gaussian settings. To see this, consider the case
where $Z_n$ has i.i.d.\ Rademacher entries, taking values $\pm 1$ with equal
probability. Then the first two columns of $Z_n$ coincide with probability
\[
\mathbb{P}\big(Z_{\cdot 1}=Z_{\cdot 2}\big)
=
2^{-n+1},
\]
since, for each of the $n$ rows, the two corresponding entries agree with
probability $1/2$, and the rows are independent. On this event, the matrix has
two identical columns and is therefore rank-deficient, which implies
$s_{\min}(Z_n)=0$. Hence,
\[
\mathbb{P}\big(s_{\min}(Z_n)=0\big)\ge 2^{-n+1}=\exp\{-c'n\}
\]
for a universal constant $c'>0$. This shows that an additive term of order
$\exp\{-cn\}$ in Theorem~\ref{thm_vershynin} cannot, in general, be removed by
taking $\varepsilon$ smaller (or even by taking $\varepsilon=0$), and is thus
optimal up to constants. 

It is also important to note that the exponential term in Theorem~\ref{thm_vershynin} is not merely a
technical artifact required to accommodate discrete distributions. In fact, in Proposition~\ref{prop:subg_counterexample_r1}, we construct a continuous sub-Gaussian distribution for which the lower tail of $s_{\min}(Z_n)$ near the origin decays substantially more slowly, at a logarithmic--polynomial rate - leading to the explosion of moments of 
$\frac{\sqrt{n}}{s_{\min}(Z_n)}$ as $n$ increases. This demonstrates that even within the class of continuous sub-Gaussian
distributions, it is not guaranteed that the small-ball behavior of
$s_{\min}(Z_n)$ exhibits a purely polynomial decay in $\varepsilon$.

One of our key contributions is to show that under Gaussianity, this additional term can be avoided. This is essential for our main objective of obtaining finite moments for
inverse quantities such as $\big(\sqrt{n}/s_{\min}(Z_n)\big)^r$ and $\kappa(Z_n)^r$.

Intuitively, the divergence in Proposition~\ref{prop:subg_counterexample_r1} is a consequence of the fact that 
the entries have an unbounded density near the origin, placing substantial mass
close to zero. This increases the probability that certain columns become
abnormally small, forcing the smallest singular value to be extremely small and leading to
divergence of inverse moments and condition number moments. 
This leads to another interesting question: can our results, namely
Theorem~\ref{min_singular_value} and Theorem~\ref{theorem_condition_number}, be
extended to a broad class of sub-Gaussian distributions whose densities are
bounded in a neighborhood of the origin? While this appears plausible at an
intuitive level, it is not immediate from the existing literature, nor from the
structure of our proof of Theorem~\ref{min_singular_value}.

To see the source of the difficulty, recall the following key step in the proof of
Theorem~\ref{min_singular_value} (see \eqref{lemma min:1}--\eqref{lemma min:4}).
Let $W_n=X_n^\top X_n$, where $X_n$ has i.i.d.\ $\mathcal{N}(0,1)$ entries. A crucial
Gaussian feature is that for any fixed $u\in\mathcal{S}^{p-1}$,
\[
u^\top W_n^{-1}u \sim \text{Inverse-}\chi^2_{n-p+1}.
\]
This identity allows us to convert the problem of bounding $s_{\min}(X_n)$ into a
problem of controlling $\lambda_{\max}(W_n^{-1})$, since
$\lambda_{\max}(W_n^{-1})=\sup_{\|u\|_2\le 1}u^\top W_n^{-1}u$. The latter is
amenable to a standard covering argument: the supremum over the unit ball can
be reduced to a supremum over a finite $\varepsilon$-net, and any tail bound for
a fixed quadratic form $u^\top W_n^{-1}u$ can be upgraded to a uniform bound
over the net via a union bound. In other words, once one has strong control for
each fixed direction $u$, the global supremum can be controlled at essentially
the same scale, up to the net cardinality.

For a general sub-Gaussian design, even if the common distribution of the entries has a density
bounded near zero, the exact inverse-$\chi^2$ distribution of $u^\top W_n^{-1}u$
is no longer available. As a result, we cannot leverage the same direct route to
a sharp control of $\lambda_{\max}(W_n^{-1})$. Instead, existing approaches for
sub-Gaussian matrices typically proceed by constructing an appropriate
\textit{good set} on which one can control the relevant quadratic forms
uniformly (see, e.g., \cite[Lemma~7.4 and Lemma~7.6]{RudelsonVershynin2009SmallestSingularRectangular}).
The contribution of the complement of this good set is then controlled by a
separate argument, and it is precisely at this stage that an additional term of
the form $\exp\{-cn\}$ appears. Consequently, even for sub-Gaussian distributions
with densities bounded near the origin, it is not clear how one could avoid the
extra $\exp\{-cn\}$ term using the above good set approach, and this is
exactly the obstruction to obtaining finite inverse moments for the smallest
singular value and for the condition number in the same way as in the Gaussian
case. 

That said, extending our inverse-moment and condition-number results (Theorem \ref{min_singular_value} and Theorem \ref{theorem_condition_number}) to a
natural subclass of sub-Gaussian distributions with bounded densities near the
origin remains an interesting and nontrivial problem. We view this as a
promising direction for future research building on the present work.

\appendix

\section{The first appendix}\label{apnd_A}

\subsection{Proof of Lemma \ref{lem:multi_variance_term}}

\noindent Let $S_n=(1/n)X_n^\top X_n$ and set $A=(S_n+\tilde{\lambda}_n I_p)^{-1}$. Under (\ref{model}),
$Y_n=X_nB+E$, so
\[
\hat B_\lambda
=
A\frac{1}{n}X_n^\top Y_n
=
A\Big(S_nB+\frac{1}{n}X_n^\top E\Big),
\qquad
\hat B_\lambda-B = -\tilde{\lambda}_n AB + A\frac{1}{n}X_n^\top E.
\]
Hence
\[
X_n(\hat B_\lambda-B)= -\tilde{\lambda}_n X_nAB + X_nA\frac{1}{n}X_n^\top E.
\]
Conditioning on $X_n$, the cross term vanishes since $\mathbb{E}(E\mid X_n)=0$,
and therefore
\begin{align*}
R_{\mathrm{pred}}(\tilde{\lambda}_n\mid X_n)
&=
\frac{1}{n}\mathbb{E}\big[\|X_n(\hat B_\lambda-B)\|_F^2\mid X_n\big]\\
&=
\frac{\tilde{\lambda}_n^2}{n}\|X_nAB\|_F^2
+\frac{1}{n}\mathbb{E}\Big[\Big\|X_nA\frac{1}{n}X_n^\top E\Big\|_F^2\Bigm|X_n\Big].
\end{align*}
For the first term,
\[
\frac{1}{n}\|X_nAB\|_F^2
=
\operatorname{tr}\!\Big(B^\top A\Big(\frac{1}{n}X_n^\top X_n\Big)AB\Big)
=
\operatorname{tr}\!\Big(B^\top AS_nAB\Big),
\]
which yields the stated bias term.

For the second term, define $M=(1/n)X_nAX_n^\top$. Then
\[
X_nA\frac{1}{n}X_n^\top E = ME.
\]
Write the rows of $E$ as $E_1^\top,\ldots,E_n^\top$, with $E_i\sim N(0,\Sigma_\varepsilon)$
i.i.d. For each $i$,
\[
(ME)_i^\top=\sum_{j=1}^n M_{ij}E_j^\top,
\]
and independence of the rows implies
\[
\mathbb{E}\big[\|(ME)_i\|_2^2\mid X_n\big]
=
\sum_{j=1}^n M_{ij}^2\,\mathbb{E}\big[\|E_j\|_2^2\big]
=
\mathrm{tr}(\Sigma_\varepsilon)\sum_{j=1}^n M_{ij}^2.
\]
Summing over $i$ gives
\[
\mathbb{E}\big[\|ME\|_F^2\mid X_n\big]
=
\mathrm{tr}(\Sigma_\varepsilon)\sum_{i,j} M_{ij}^2
=
\mathrm{tr}(\Sigma_\varepsilon)\,\operatorname{tr}(M^2).
\]
Using $M=(1/n)X_nAX_n^\top$ and cyclicity of trace,
\[
\operatorname{tr}(M^2)
=
\operatorname{tr}\!\left(\frac{1}{n}X_nAX_n^\top\frac{1}{n}X_nAX_n^\top\right)
=
\operatorname{tr}\!\left(S_nAS_nA\right)
=
\operatorname{tr}\!\Big(S_n(S_n+\tilde{\lambda}_n I_p)^{-1}S_n(S_n+\tilde{\lambda}_n I_p)^{-1}\Big),
\]
which proves the stated variance term.

To bound the bias term, diagonalize $S_n=U\Lambda U^\top$ with
$\Lambda=\mathrm{diag}(\lambda_1,\ldots,\lambda_p)$. Then
\[
AS_nA
=
U\,\mathrm{diag}\!\left(\frac{\lambda_1}{(\lambda_1+\tilde{\lambda}_n)^2},\ldots,
\frac{\lambda_p}{(\lambda_p+\tilde{\lambda}_n)^2}\right)U^\top,
\]
so
\[
\operatorname{tr}\!\Big(B^\top AS_nAB\Big)
=
\operatorname{tr}\!\Big(BB^\top AS_nA\Big)
\le
\|BB^\top\|_2\,\operatorname{tr}(AS_nA)
\le
\|B\|_F^2\,\lambda_{\max}(AS_nA).
\]
Moreover, for each $j$,
\[
\frac{\lambda_j}{(\lambda_j+\tilde{\lambda}_n)^2}
\le
\frac{\lambda_{\max}(S_n)}{(\lambda_{\min}(S_n)+\tilde{\lambda}_n)^2},
\]
and hence
\[
\lambda_{\max}(AS_nA)\le \frac{\lambda_{\max}(S_n)}{(\lambda_{\min}(S_n)+\tilde{\lambda}_n)^2}.
\]
Combining the last two displays yields
\[
\tilde{\lambda}_n^2\,\operatorname{tr}\!\Big(B^\top AS_nAB\Big)
\le
\frac{\tilde{\lambda}_n^2\,\lambda_{\max}(S_n)}{(\lambda_{\min}(S_n)+\tilde{\lambda}_n)^2}\,\|B\|_F^2.
\]

For the variance bound, using the same diagonalization,
\[
\operatorname{tr}\!\Big(S_nAS_nA\Big)
=
\sum_{j=1}^p \frac{\lambda_j^2}{(\lambda_j+\tilde{\lambda}_n)^2}
\le
\sum_{j=1}^p \frac{\lambda_{\max}(S_n)^2}{(\lambda_{\min}(S_n)+\tilde{\lambda}_n)^2}
=
p\,\frac{\lambda_{\max}(S_n)^2}{(\lambda_{\min}(S_n)+\tilde{\lambda}_n)^2}.
\]
Multiplying by $\mathrm{tr}(\Sigma_\varepsilon)/n$ gives the desired variance bound.

\subsection{Proof of Lemma \ref{lem:gd_psd_general}}

Since $S_n\theta\in\operatorname{range}(S_n)$ for all $\theta$ and
$b\in\operatorname{range}(S_n)$, it follows that $\nabla g_n(\theta)\in\operatorname{range}(S_n)$
for all $\theta$. The gradient descent recursion with step size $\eta=1/L_n$,
\[
\theta^{(t+1)}=\theta^{(t)}-\frac1{L_n}\big(S_n\theta^{(t)}-b\big),
\qquad L_n:=\lambda_{\max}(S_n),
\]
therefore implies $\theta^{(t+1)}-\theta^{(t)}\in\operatorname{range}(S_n)$. Hence, if
$\theta^{(0)}\in\operatorname{range}(S_n)$, then $\theta^{(t)}\in\operatorname{range}(S_n)$
for all $t\ge0$. In particular, $e^{(t)}:=\theta^{(t)}-\theta^\star\in\operatorname{range}(S_n)$
for all $t\ge0$. Now using, $S_n\theta^\star=b$, we obtain
\begin{align}\label{eq_gd_linear1}
e^{(t+1)}
=\theta^{(t+1)}-\theta^\star
=\theta^{(t)}-\theta^\star-\frac1{L_n}\big(S_n\theta^{(t)}-S_n\theta^\star\big)
=\Big(I-\frac1{L_n}S_n\Big)e^{(t)}.
\end{align}
Moreover, expanding $g_n(\theta^\star+e)$ and using $b=S_n\theta^\star$ yields the exact identity
\[
g_n(\theta^\star+e)-g_n(\theta^\star)=\frac12\,e^\top S_n e,
\]
and hence
\[
g_n(\theta^{(t)})-g_n(\theta^\star)=\frac12\,(e^{(t)})^\top S_n e^{(t)}.
\]

Now if, $\{(\lambda_i,v_i)\}_{i=1}^r$ be an orthonormal eigenbasis of $\operatorname{range}(S_n)$,
so that $S_nv_i=\lambda_i v_i$ with $\mu_n \le \lambda_i\le L_n,
\quad i=1,\dots,r.$

Write $e^{(t)}=\sum_{i=1}^r \alpha_i^{(t)} v_i$. Then
\[
\alpha_i^{(t+1)}=\Big(1-\frac{\lambda_i}{L_n}\Big)\alpha_i^{(t)}.
\]
Therefore,
\[
\begin{aligned}
g_n(\theta^{(t+1)})-g_n(\theta^\star)
&=\frac12\sum_{i=1}^r \lambda_i\Big(\alpha_i^{(t+1)}\Big)^2
=\frac12\sum_{i=1}^r \lambda_i\Big(1-\frac{\lambda_i}{L_n}\Big)^2\Big(\alpha_i^{(t)}\Big)^2 \\
&\le \Big(1-\frac{\mu_n}{L_n}\Big)^2\cdot \frac12\sum_{i=1}^r \lambda_i\Big(\alpha_i^{(t)}\Big)^2 \\
&=\Big(1-\frac{\mu_n}{L_n}\Big)^2\big\{g_n(\theta^{(t)})-g_n(\theta^\star)\big\}.
\end{aligned}
\]
Iterating this inequality yields, for all $t\ge0$,
\[
g_n(\theta^{(t)})-g_n(\theta^\star)
\le \Big(1-\frac{\mu_n}{L_n}\Big)^{2t}\big\{g_n(\theta^{(0)})-g_n(\theta^\star)\big\}.
\]
Since $0<1-\mu_n/L_n<1$, we also have
$\big(1-\mu_n/L_n\big)^{2t}\le \big(1-\mu_n/L_n\big)^{t}$, which gives the second bound.
Finally, setting
\[
q_n:=1-\frac{\mu_n}{L_n}\in(0,1),
\]
the above displays can be written as $q_n^{2t}$ and $q_n^{t}$.

\acks 
We thank Roman Vershynin and Mark Rudelson for providing useful resources that helped enhance the quality of this paper.

\fund 
The first author's work was supported by NSF-DMS-2506060, the second author's work was supported by NSF-DMS-2506059, and the third author's work was supported by NSF-DMS-2506058.

\competing 
There were no competing interests to declare which arose during the preparation or publication process of this article.

%
%
%

\bibliography{ref}

@article{Dongarra,
author = {Chen, Zizhong and Dongarra, Jack J.},
title = {Condition Numbers of Gaussian Random Matrices},
journal = {SIAM Journal on Matrix Analysis and Applications},
volume = {27},
number = {3},
pages = {603-620},
year = {2005},
doi = {10.1137/040616413},
URL = { https://doi.org/10.1137/040616413},
eprint = { https://doi.org/10.1137/040616413}
}

@article{edelman,
author = {Edelman, Alan and Sutton, Brian D.},
title = {Tails of Condition Number Distributions},
journal = {SIAM Journal on Matrix Analysis and Applications},
volume = {27},
number = {2},
pages = {547-560},
year = {2005},
doi = {10.1137/040614256},
URL = {  https://doi.org/10.1137/040614256},
eprint = { https://doi.org/10.1137/040614256}
}

@misc{poggio2020doubledescentconditionnumber,
      title={Double descent in the condition number}, 
      author={Tomaso Poggio and Gil Kur and Andrzej Banburski},
      year={2020},
      eprint={1912.06190},
      archivePrefix={arXiv},
      primaryClass={cs.LG},
      url={https://arxiv.org/abs/1912.06190}, 
}

@misc{pan2012conditionnumbersrandomtoeplitz,
      title={Condition Numbers of Random Toeplitz and Circulant Matrices}, 
      author={Victor Y. Pan and Guoliang Qian},
      year={2012},
      eprint={1212.4551},
      archivePrefix={arXiv},
      primaryClass={math.NA},
      url={https://arxiv.org/abs/1212.4551}, 
}

@article{PhysRevE.90.050103,
  title = {Phase transitions in the condition-number distribution of Gaussian random matrices},
  author = {P\'erez Castillo, Isaac and Katzav, Eytan and Vivo, Pierpaolo},
  journal = {Phys. Rev. E},
  volume = {90},
  issue = {5},
  pages = {050103},
  numpages = {5},
  year = {2014},
  month = {Nov},
  publisher = {American Physical Society},
  doi = {10.1103/PhysRevE.90.050103},
  url = {https://link.aps.org/doi/10.1103/PhysRevE.90.050103}
}

@misc{manriquemirón2023conditionnumberrandomtridiagonal,
      title={Condition Number of Random Tridiagonal Toeplitz Matrix}, 
      author={Paulo Manrique-Mirón},
      year={2023},
      eprint={2305.11971},
      archivePrefix={arXiv},
      primaryClass={math.PR},
      url={https://arxiv.org/abs/2305.11971}, 
}

@article{Sankar,
author = {Sankar, Arvind and Spielman, Daniel A. and Teng, Shang-Hua},
title = {Smoothed Analysis of the Condition Numbers and Growth Factors of Matrices},
journal = {SIAM Journal on Matrix Analysis and Applications},
volume = {28},
number = {2},
pages = {446-476},
year = {2006},
doi = {10.1137/S0895479803436202},

URL = { 
    
        https://doi.org/10.1137/S0895479803436202
    
    

},
eprint = { 
    
        https://doi.org/10.1137/S0895479803436202
    
    

}
}

@misc{vershynin2011introductionnonasymptoticanalysisrandom,
      title={Introduction to the non-asymptotic analysis of random matrices}, 
      author={Roman Vershynin},
      year={2011},
      eprint={1011.3027},
      archivePrefix={arXiv},
      primaryClass={math.PR},
      url={https://arxiv.org/abs/1011.3027}, 
}

@article{Batir2017GammaBounds,
  author  = {Bat{\i}r, Necdet},
  title   = {Bounds for the Gamma Function},
  journal = {Results in Mathematics},
  year    = {2017},
  volume  = {72},
  pages   = {865--874},
  doi     = {10.1007/s00025-017-0698-0}
}

@article {ridge2,
    AUTHOR = {Dobriban, Edgar and Wager, Stefan},
     TITLE = {High-dimensional asymptotics of prediction: ridge regression
              and classification},
   JOURNAL = {Ann. Statist.},
  FJOURNAL = {The Annals of Statistics},
    VOLUME = {46},
      YEAR = {2018},
    NUMBER = {1},
     PAGES = {247--279},
      ISSN = {0090-5364,2168-8966},
   MRCLASS = {62H99 (62H30 62J07)},
  MRNUMBER = {3766952},
       DOI = {10.1214/17-AOS1549},
       URL = {https://doi.org/10.1214/17-AOS1549},
}

@article{ridge1,
  title={On the asymptotic risk of ridge regression with many predictors},
  author={Balasubramanian, Krishnakumar and Burman, Prabir and Paul, Debashis},
  journal={Indian Journal of Pure and Applied Mathematics},
  volume={55},
  number={3},
  pages={1040--1054},
  year={2024},
  publisher={Springer}
}

@article {gram,
    AUTHOR = {Koltchinskii, Vladimir and Lounici, Karim},
     TITLE = {Concentration inequalities and moment bounds for sample
              covariance operators},
   JOURNAL = {Bernoulli},
  FJOURNAL = {Bernoulli. Official Journal of the Bernoulli Society for
              Mathematical Statistics and Probability},
    VOLUME = {23},
      YEAR = {2017},
    NUMBER = {1},
     PAGES = {110--133},
      ISSN = {1350-7265,1573-9759},
   MRCLASS = {60E15 (60B11 60G15 62G99)},
  MRNUMBER = {3556768},
       DOI = {10.3150/15-BEJ730},
       URL = {https://doi.org/10.3150/15-BEJ730},
}

@book{vershynin2018high,
  author    = {Roman Vershynin},
  title     = {High-Dimensional Probability: An Introduction with Applications in Data Science},
  series    = {Cambridge Series in Statistical and Probabilistic Mathematics},
  publisher = {Cambridge University Press},
  address   = {Cambridge, UK},
  year      = {2018},
  isbn      = {9781108415194},
  doi       = {10.1017/9781108231596},
}

@book{wainwright2019high,
  author    = {Martin J. Wainwright},
  title     = {High-Dimensional Statistics: A Non-Asymptotic Viewpoint},
  series    = {Cambridge Series in Statistical and Probabilistic Mathematics},
  volume    = {48},
  publisher = {Cambridge University Press},
  address   = {Cambridge, UK},
  year      = {2019},
  isbn      = {9781108498029},
  doi       = {10.1017/9781108627771},
  note      = {A non-asymptotic viewpoint},
}

@article{RudelsonVershynin2009SmallestSingularRectangular,
  author  = {Rudelson, Mark and Vershynin, Roman},
  title   = {The smallest singular value of a random rectangular matrix},
  journal = {Comm. Pure Appl. Math.},
  volume  = {62},
  pages   = {1707--1739},
  year    = {2009},
  doi     = {10.1002/cpa.20294}
}

@article{Tikhomirov2016SmallestSingularNoMoments,
  author  = {Tikhomirov, Konstantin E.},
  title   = {The smallest singular value of random rectangular matrices with no moment assumptions on entries},
  journal = {Isr. J. Math.},
  volume  = {212},
  pages   = {289--314},
  year    = {2016},
  doi     = {10.1007/s11856-016-1287-8}
}

@article{Li1999MetricEntropySmallBall,
  author  = {Li, Wenbo V.},
  title   = {Approximation, metric entropy and small ball estimates for Gaussian measures},
  journal = {Ann. Probab.},
  volume  = {27},
  number  = {3},
  year    = {1999},
  doi     = {10.1214/aop/1022677459}
}

@article{Peng2009PartialCorrJointSparse,
  author  = {Peng, Jie and Wang, Pei and Zhou, Nengfeng and Zhu, Ji},
  title   = {Partial Correlation Estimation by Joint Sparse Regression Models},
  journal = {J. Am. Stat. Assoc.},
  year    = {2009},
  volume  = {104},
  number  = {486},
  pages   = {735--746},
  month   = jun,
  doi     = {10.1198/jasa.2009.0126},
  pmid    = {19881892},
  pmcid   = {PMC2770199}
}

@book{Muirhead1982Aspects,
  title     = {Aspects of Multivariate Statistical Theory},
  author    = {Muirhead, Robb J.},
  publisher = {John Wiley \& Sons},
  year      = {1982},
  address   = {New York},
  series    = {Wiley Series in Probability and Mathematical Statistics}
}

@book{Anderson2003Multivariate,
  title     = {An Introduction to Multivariate Statistical Analysis},
  author    = {Anderson, Theodore W.},
  edition   = {3},
  publisher = {Wiley},
  year      = {2003},
  address   = {Hoboken, NJ}
}

@article{LedoitWolf2004WellConditioned,
  title   = {A Well-Conditioned Estimator for Large-Dimensional Covariance Matrices},
  author  = {Ledoit, Olivier and Wolf, Michael},
  journal = {Journal of Multivariate Analysis},
  volume  = {88},
  number  = {2},
  pages   = {365--411},
  year    = {2004},
  doi     = {10.1016/S0047-259X(03)00096-4}
}

@article{bubeck2015convex, title={Convex Optimization: Algorithms and Complexity}, author={Bubeck, S{\'e}bastien}, journal={Foundations and Trends{\textregistered} in Machine Learning}, volume={8}, number={3--4}, pages={231--357}, year={2015}, publisher={Now Publishers} }

@book{saad2003iterative,
  title        = {Iterative Methods for Sparse Linear Systems},
  author       = {Saad, Yousef},
  edition      = {2},
  publisher    = {Society for Industrial and Applied Mathematics (SIAM)},
  address      = {Philadelphia, PA},
  year         = {2003},
  doi          = {10.1137/1.9780898718003}
}

@book{trefethen1997numerical,
  title        = {Numerical Linear Algebra},
  author       = {Trefethen, Lloyd N. and Bau, David},
  publisher    = {Society for Industrial and Applied Mathematics (SIAM)},
  address      = {Philadelphia, PA},
  year         = {1997},
  isbn         = {978-0-89871-361-9}
}

@article {bickel2008regularized,
    AUTHOR = {Bickel, Peter J. and Levina, Elizaveta},
     TITLE = {Regularized estimation of large covariance matrices},
   JOURNAL = {Ann. Statist.},
  FJOURNAL = {The Annals of Statistics},
    VOLUME = {36},
      YEAR = {2008},
    NUMBER = {1},
     PAGES = {199--227},
      ISSN = {0090-5364},
   MRCLASS = {62H12 (62F12 62G09)},
  MRNUMBER = {2387969},
       DOI = {10.1214/009053607000000758},
       URL = {https://doi.org/10.1214/009053607000000758},
}

@article {xiang,
    AUTHOR = {Xiang, Ruoxuan and Khare, Kshitij and Ghosh, Malay},
     TITLE = {High dimensional posterior convergence rates for decomposable
              graphical models},
   JOURNAL = {Electron. J. Stat.},
  FJOURNAL = {Electronic Journal of Statistics},
    VOLUME = {9},
      YEAR = {2015},
    NUMBER = {2},
     PAGES = {2828--2854},
   MRCLASS = {62H99 (62F15 62G20)},
  MRNUMBER = {3439186},
       DOI = {10.1214/15-EJS1084},
       URL = {https://doi.org/10.1214/15-EJS1084},
}

@article {spectrum,
    AUTHOR = {El Karoui, Noureddine},
     TITLE = {Spectrum estimation for large dimensional covariance matrices
              using random matrix theory},
   JOURNAL = {Ann. Statist.},
  FJOURNAL = {The Annals of Statistics},
    VOLUME = {36},
      YEAR = {2008},
    NUMBER = {6},
     PAGES = {2757--2790},
      ISSN = {0090-5364},
   MRCLASS = {62H12 (62-09)},
  MRNUMBER = {2485012},
MRREVIEWER = {Jussi S. Klemel\"{a}},
       DOI = {10.1214/07-AOS581},
       URL = {https://doi.org/10.1214/07-AOS581},
}

@article {sarkar,
    AUTHOR = {Sarkar, Partha and Khare, Kshitij and Ghosh, Malay},
     TITLE = {Posterior consistency in multi-response regression models with
              non-informative priors for the error covariance matrix in
              growing dimensions},
   JOURNAL = {Bernoulli},
  FJOURNAL = {Bernoulli. Official Journal of the Bernoulli Society for
              Mathematical Statistics and Probability},
    VOLUME = {31},
      YEAR = {2025},
    NUMBER = {3},
     PAGES = {2403--2433},
      ISSN = {1350-7265,1573-9759},
   MRCLASS = {62J05 (62F12 62F15 62H12)},
  MRNUMBER = {4889264},
MRREVIEWER = {Weiping\ Zhang},
       DOI = {10.3150/24-bej1810},
       URL = {https://doi.org/10.3150/24-bej1810},
}

@article {bai_yin,
    AUTHOR = {Bai, Z. D. and Yin, Y. Q.},
     TITLE = {Limit of the smallest eigenvalue of a large-dimensional sample
              covariance matrix},
   JOURNAL = {Ann. Probab.},
  FJOURNAL = {The Annals of Probability},
    VOLUME = {21},
      YEAR = {1993},
    NUMBER = {3},
     PAGES = {1275--1294},
      ISSN = {0091-1798,2168-894X},
   MRCLASS = {60F15 (62H99)},
  MRNUMBER = {1235416},
MRREVIEWER = {Jack\ W.\ Silverstein},
       URL =
              {http://links.jstor.org/sici?sici=0091-1798(199307)21:3<1275:LOTSEO>2.0.CO;2-2&origin=MSN},
}

@article{Penrose_1955,
  title = {A Generalized Inverse for Matrices},
  author = {Penrose, R.},
  journal = {Math. Proc. Camb. Philos. Soc.},
  volume = {51},
  number = {3},
  pages = {406--413},
  year = {1955}
}

@article {bai_ghosh,
    AUTHOR = {Bai, Ray and Ghosh, Malay},
     TITLE = {High-dimensional multivariate posterior consistency under
              global-local shrinkage priors},
   JOURNAL = {J. Multivariate Anal.},
  FJOURNAL = {Journal of Multivariate Analysis},
    VOLUME = {167},
      YEAR = {2018},
     PAGES = {157--170},
      ISSN = {0047-259X},
   MRCLASS = {62F12 (62F15 62H12 62J07)},
  MRNUMBER = {3830639},
       DOI = {10.1016/j.jmva.2018.04.010},
       URL = {https://doi.org/10.1016/j.jmva.2018.04.010},
}

@article {armagan,
    AUTHOR = {Armagan, A. and Dunson, D. B. and Lee, J. and Bajwa, W. U. and
              Strawn, N.},
     TITLE = {Posterior consistency in linear models under shrinkage priors},
   JOURNAL = {Biometrika},
  FJOURNAL = {Biometrika},
    VOLUME = {100},
      YEAR = {2013},
    NUMBER = {4},
     PAGES = {1011--1018},
      ISSN = {0006-3444},
   MRCLASS = {62F15 (62J05 62J07)},
  MRNUMBER = {3142348},
       DOI = {10.1093/biomet/ast028},
       URL = {https://doi.org/10.1093/biomet/ast028},
}

@article {littlewood_offword,
    AUTHOR = {Rudelson, Mark and Vershynin, Roman},
     TITLE = {The {L}ittlewood-{O}fford problem and invertibility of random
              matrices},
   JOURNAL = {Adv. Math.},
  FJOURNAL = {Advances in Mathematics},
    VOLUME = {218},
      YEAR = {2008},
    NUMBER = {2},
     PAGES = {600--633},
      ISSN = {0001-8708,1090-2082},
   MRCLASS = {60E15 (60B20)},
  MRNUMBER = {2407948},
MRREVIEWER = {Ben\ Joseph\ Green},
       DOI = {10.1016/j.aim.2008.01.010},
       URL = {https://doi.org/10.1016/j.aim.2008.01.010},
}

@book{alan_edelman,
    AUTHOR = {Edelman, Alan Stuart},
     TITLE = {Eigenvalues and condition numbers of random matrices},
      NOTE = {Thesis (Ph.D.)--Massachusetts Institute of Technology},
 PUBLISHER = {ProQuest LLC, Ann Arbor, MI},
      YEAR = {1989},
     PAGES = {(no paging)},
   MRCLASS = {Thesis},
  MRNUMBER = {2941174},
       URL = {http://gateway.proquest.com/openurl?url_ver=Z39.88-2004&rft_val_fmt=info:ofi/fmt:kev:mtx:dissertation&res_dat=xri:pqdiss&rft_dat=xri:pqdiss:0379924}
}

@article {Hastie,
    AUTHOR = {Hastie, Trevor and Montanari, Andrea and Rosset, Saharon and
              Tibshirani, Ryan J.},
     TITLE = {Surprises in high-dimensional ridgeless least squares
              interpolation},
   JOURNAL = {Ann. Statist.},
  FJOURNAL = {The Annals of Statistics},
    VOLUME = {50},
      YEAR = {2022},
    NUMBER = {2},
     PAGES = {949--986},
      ISSN = {0090-5364,2168-8966},
   MRCLASS = {62J05 (62F12 62J02 62J07)},
  MRNUMBER = {4404925},
       DOI = {10.1214/21-aos2133},
       URL = {https://doi.org/10.1214/21-aos2133},
}
\bibliographystyle{APT}

\end{document}